\def\draft{0}
\def\anon{0}
\DeclareMathOperator{\Span}{span}
\DeclareMathOperator{\Norm}{\mathcal{N}}
\definecolor{DarkBlue}{RGB}{0,0,150}
\definecolor{NotSoDarkBlue}{RGB}{15,15,210}
\definecolor{DarkRed}{RGB}{150,0,0}
\definecolor{DarkGreen}{RGB}{0,100,0}
\newcommand{\R}{\mathbb{R}}
\newcommand{\N}{\mathbb{N}}
\newcommand{\Z}{\mathbb{Z}}
\newcommand{\one}{\mathbf{1}}
\newcommand{\zero}{\mathbf{0}}
\newcommand{\oracle}{\mathcal{O}}
\newcommand{\algo}[1]{\ensuremath{\mathsf{#1}}}
\newcommand{\negl}{\algo{negl}}
\newcommand{\poly}{\algo{poly}}
\newcommand{\polylog}{\algo{polylog}}
\newcommand{\PP}{\mathcal{P}}
\newcommand{\LL}{\mathcal{L}}
\newtheorem{theorem}{Theorem}
\newtheorem{lemma}{Lemma}
\newtheorem{remark}{Remark}
\newtheorem{assumption}{Assumption}
\newtheorem{proposition}{Proposition}
\newtheorem{corollary}{Corollary}
\newtheorem{definition}{Definition}
\def\veca{\mathbf{a}}
\def\vecb{\mathbf{b}}
\def\vecc{\mathbf{c}}
\def\vece{\mathbf{e}}
\def\vecf{\mathbf{f}}
\def\vecm{\mathbf{m}}
\def\vecp{\mathbf{p}}
\def\vecs{\mathbf{s}}
\def\vecx{\mathbf{x}}
\def\vecu{\mathbf{u}}
\def\vecv{\mathbf{v}}
\def\vecw{\mathbf{w}}
\def\vecy{\mathbf{y}}
\def\vecz{\mathbf{z}}
\def\vect{\mathbf{t}}
\def\vecmu{\boldsymbol{\mu}}
\def\matA{\mathbf{A}}
\def\matB{\mathbf{B}}
\def\matE{\mathbf{E}}
\def\matI{\mathbf{I}}
\def\matM{\mathbf{M}}
\def\matS{\mathbf{S}}
\def\matU{\mathbf{U}}
\def\matV{\mathbf{V}}
\def\matW{\mathbf{W}}
\def\matSigma{\mathbf{\Sigma}}
\newcommand{\norm}[1]{\left\| {#1} \right\|}
\newcommand{\inner}[1]{\left\langle {#1} \right\rangle}
\newcommand{\floor}[1]{\left\lfloor {#1} \right\rfloor}
\newcommand{\ceil}[1]{\left\lceil {#1} \right\rceil}
\newcommand{\SBP}{\mathrm{SBP}}
\newcommand{\NPP}{\mathrm{NPP}}
\newcommand{\ABP}{\mathrm{ABP}}
\newcommand{\SIVP}{\mathrm{SIVP}}
\newcommand{\GDD}{\mathrm{GDD}}
\newcommand{\IncGDD}{\mathrm{IncGDD}}
\newcommand{\GapCRP}{\mathrm{GapCRP}}
\newcommand{\sbpError}{\kappa}
\newcommand{\nppError}{\kappa}
\newcommand{\iid}{\mathrm{i.i.d.}}
\def\ShowAuthNotes{1}
\def\ShowAuthNotes{0}
\newcommand{\authnote}[3]{\textcolor{#3}{[{\footnotesize {\bf #1:} { {#2}}}]}}
\newcommand{\authnote}[3]{}
\title{{Symmetric Perceptrons, Number Partitioning \\ and Lattices}}
\author{Neekon Vafa\\MIT\\\href{mailto:nvafa@mit.edu}{nvafa@mit.edu} \and Vinod Vaikuntanathan\\MIT\\\href{mailto:vinodv@mit.edu}{vinodv@mit.edu}}
\author{Anonymous authors}
\date{}
\begin{document}
\maketitle

\begin{abstract}
    \noindent
The \emph{symmetric binary perceptron} ($\mathrm{SBP}_{\kappa}$) problem with parameter $\kappa : \mathbb{R}_{\geq1} \to [0,1]$ is an average-case search problem defined as follows: given a random Gaussian matrix $\mathbf{A} \sim \mathcal{N}(0,1)^{n \times m}$ as input where $m \geq n$, output a vector $\mathbf{x} \in \{-1,1\}^m$ such that $$|| \mathbf{A} \mathbf{x} ||_{\infty} \leq \kappa(m/n) \cdot \sqrt{m}~.$$
The \emph{number partitioning problem} ($\mathrm{NPP}_{\kappa}$) corresponds to the special case of setting $n=1$. There is considerable evidence that both problems exhibit large computational-statistical gaps.
    
In this work, we show (nearly) tight \emph{average-case} hardness for these problems, assuming the \emph{worst-case} hardness of standard approximate shortest vector problems on lattices.

\begin{itemize}
\item For $\mathrm{SBP}_\kappa$, statistically, solutions exist with $\kappa(x) = 2^{-\Theta(x)}$ (Aubin, Perkins and Zdeborov\'{a}, Journal of Physics 2019). For large $n$, the best that efficient algorithms have been able to achieve is a far cry from the statistical bound, namely $\kappa(x) = \Theta(1/\sqrt{x})$ (Bansal and Spencer, Random Structures and Algorithms 2020). The problem has been extensively studied in the TCS and statistics communities, and Gamarnik, K{\i}z{\i}lda\u{g}, Perkins and Xu (FOCS 2022) conjecture that Bansal-Spencer is tight: namely, $\kappa(x) = \widetilde{\Theta}(1/\sqrt{x})$ is the optimal value achieved by computationally efficient algorithms. 
        
We prove their conjecture assuming the worst-case hardness of approximating the shortest vector problem on lattices.

\item For $\mathrm{NPP}_\kappa$, statistically, solutions exist with $\kappa(m) = \Theta(2^{-m})$ (Karmarkar, Karp, Lueker and Odlyzko, Journal of Applied Probability 1986). Karmarkar and Karp's classical differencing algorithm achieves $\kappa(m) = 2^{-O(\log^2 m)}~.$
        
We prove that Karmarkar-Karp is nearly tight: namely, no polynomial-time algorithm can achieve $\kappa(m) = 2^{-\Omega(\log^3 m)}$, once again assuming the worst-case subexponential hardness of approximating the shortest vector problem on lattices to within a subexponential factor.
\end{itemize}

\noindent
Our hardness results are versatile, and hold with respect to different distributions of the matrix $\mathbf{A}$ (e.g., i.i.d. uniform entries from $[0,1]$) and weaker requirements on the solution vector $\mathbf{x}$. 
\end{abstract}
\newpage 

\section{Introduction}

\paragraph{Symmetric Binary Perceptrons.} 
The \emph{symmetric binary perceptron} ($\SBP_{\sbpError}$) problem, also called the symmetric Ising perceptron problem~\cite{JH60,Win61,Cover,Aubin_2019,BDVLZ20,DBLP:conf/stoc/PerkinsX21,abbe2021proofcontiguityconjecturelognormal,DBLP:conf/stoc/AbbeLS22,DBLP:conf/focs/GamarnikK0X22,DBLP:conf/colt/GamarnikK0X23,Barbier_2024}, is a search problem defined as follows: given a random matrix $\matA \sim \mathcal{N}(0,1)^{n \times m}$ with entries chosen i.i.d. from the normal distribution where $m \geq n$, find a binary vector $\vecx \in \{-1,1\}^m$ such that 
$\norm{\matA \vecx}_{\infty}$ is minimized. 
More formally, SBP with 
parameter $\sbpError : \R_{\geq 1} \to [0,1]$ asks us to find an $\vecx \in \{-1,1\}^m$ such that 
$$\norm{\matA \vecx}_{\infty} \leq \sbpError(m/n) \cdot \sqrt{m}~.$$
Here, the quality of the solution is parameterized as a function of $m/n$,  the so-called (inverse) aspect ratio of the problem.\footnote{The notation commonly used in the literature to parameterize $\SBP$ (and $\NPP$) is slightly different than the notation we choose to use. The aspect ratio is given by $\alpha = n/m$, and instead of writing $\sbpError$ as a function of $\alpha$ (or really, $x = 1/\alpha$, as we do), the roles are flipped, where $\alpha$ is a function of $\sbpError$. For example, $\sbpError(x) = 2^{-x}$ and $\sbpError(x) = 1/\sqrt{x}$, in our notation, correspond to $\alpha(\sbpError) = 1/\log_2(1/\kappa)$ and $\alpha(\sbpError) = \sbpError^2$ in the notation of \cite{DBLP:conf/focs/GamarnikK0X22}, respectively.}

The problem is versatile, and can be defined with respect to different distributions of the matrix $\matA$ (i.i.d. uniform $[0,1]$ entries is another popular choice) and different requirements on the solution vector $\vecx$. The problem also has rich connections to several other fields including the classical subset sum problem and its variants and the problem of  discrepancy minimization.

Two natural questions arise: a statistical question and a computational one. The statistical question asks for which $\sbpError$ do solutions exist (with high probability over the choice of the matrix $\matA$).  Recent works by Aubin, Perkins and Zdeborov\'{a}~\cite{Aubin_2019}, Perkins and Xu~\cite{DBLP:conf/stoc/PerkinsX21}, and Abbe, Li and Sly~\cite{abbe2021proofcontiguityconjecturelognormal} showed sharp statistical thresholds for this problem. In particular, they showed that the threshold for the existence of solutions is 
$$\sbpError_{\mathsf{stat}}(x) = O(2^{-x}).$$
On the other hand, the best solutions found by polynomial-time algorithms satisfy 
\[ \sbpError_{\mathsf{comp}}(x) = \Omega\left(\frac{1}{\sqrt{x}}\right).\footnote{In an extreme parameter regime where $n = O\left(\sqrt{\log m}\right)$, \cite{DBLP:conf/colt/TurnerMR20} gives a polynomial-time algorithm achieving discrepancy $2^{-\Omega(\log^2(m)/n)}$, but throughout our paper, for $\SBP$, we will only consider the regime where $n = \omega(\log m)$.}\]
This comes from the breakthrough work of Bansal~\cite{DBLP:conf/focs/Bansal10} and Bansal and Spencer~\cite{BS20} from the closely related field of combinatorial discrepancy theory.\footnote{Their result is established for the case of matrices $\matA$ with  i.i.d. Rademacher entries. Nevertheless, \cite{DBLP:conf/focs/GamarnikK0X22} conjecture that the same guarantee remains true for the case of i.i.d. standard normal entries.} Gamarnik, K{\i}z{\i}lda\u{g}, Perkins and Xu~\cite{DBLP:conf/focs/GamarnikK0X22,DBLP:conf/colt/GamarnikK0X23} studied the large statistical-computational gap scenario in detail and conjectured that no polynomial-time algorithms can achieve a guarantee much better than $\sbpError_{\mathsf{comp}}(x)$. In particular, they show that the so-called overlap gap property~\cite{Mezard,DBLP:conf/stoc/AchlioptasR06,GL18}, which rules out a class of stable algorithms, sets in at $$\sbpError_{\mathsf{overlap}}(x) = O\left(\frac{1}{\sqrt{x\log x}}\right).$$
We refer the reader to \cite{DBLP:conf/focs/GamarnikK0X22} for an extensive discussion of the rationale behind their conjecture.

A survey of Gamarnik on the overlap gap property~\cite{gamarnik2021overlap} points to the question of whether average-case hardness of perceptron problems (and more) can be based on \emph{worst-case} hardness assumptions. Gamarnik explicitly states that worst-case to average-case reductions for these problems ``would be ideal for our setting, as they would provide the most compelling
evidence of hardness of these problems'' \cite{gamarnik2021overlap}.

The first contribution of this work is a proof of the conjecture of \cite{DBLP:conf/focs/GamarnikK0X22} upto lower order terms, under the assumption that standard, well-studied, lattice problems are hard to approximate in the worst case.

\begin{theorem}[Informal version of \Cref{sivp-to-sbp}]\label{main-sbp-intro}
Let $\varepsilon > 0$ be any constant.
Assuming that ${\gamma}$-approximate lattice problems in $n$ dimensions with $\gamma(n) = n^{O(1/\varepsilon)}$ are worst-case hard for polynomial-time algorithms, $\SBP_{\sbpError}$ with 
$$\sbpError(x) = \frac{1}{x^{1/2 + \varepsilon}}$$ 
is hard for polynomial-time algorithms. 
Additionally, assuming near-optimal worst-case hardness of lattice problems, we obtain near-optimal hardness of SBP. In particular, assuming that $\gamma(n)$-approximate lattice problems require $2^{\omega(n^{1/2 - \varepsilon})}$ time to solve in the worst case with $\gamma(n) = 2^{n^{1/2 - \varepsilon}}$, we have that $\SBP_{\sbpError}$ with 
$$\sbpError(x) = \frac{1}{\sqrt{x}\cdot (\log x)^{c}}$$
is hard for some constant $c > 1$.
\end{theorem}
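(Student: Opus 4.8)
The plan is to establish a worst-case to average-case reduction in two stages, following the template of the Ajtai--Regev style lattice reductions but carried out in the ``continuous'' setting — a real Gaussian (or i.i.d.\ uniform) matrix — where there is no modulus to fight against.

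\emph{Stage 1: from worst-case lattice problems to a relaxed average-case perceptron.} Starting from a worst-case $\GapSVP_\gamma$ or $\SIVP_\gamma$ instance, I would first pass to an incremental / covering-radius variant ($\IncGDD$, $\GapCRP$) in the usual way, and then invoke discrete-Gaussian sampling: draw many samples from $D_{L,s}$ over the worst-case lattice $L$ at a width $s$ above its smoothing parameter (possible given a good enough basis, or bootstrapped from the outputs of the previous iteration) and arrange them into a matrix. A convolution / smoothing argument shows that this matrix is within negligible statistical distance of a genuine i.i.d.\ Gaussian $\matA$ — and the same argument applies verbatim when the target is, e.g., i.i.d.\ uniform $[0,1]$ entries, which is the source of the distribution-robustness. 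Feeding $\matA$ to the assumed average-case solver returns a combination $\vecx$ with small entries and $\norm{\matA\vecx}$ small; pulling $\vecx$ back through the sampling procedure produces a lattice vector shorter than the current distance estimate, which is exactly the progress step the incremental problem needs, so after $\poly(n)$ rounds the original lattice instance is solved. The catch is that the output of this stage is only a \emph{relaxed} perceptron solution: $\vecx \in \Z^m$ with $\norm{\vecx}_\infty$ polynomially bounded (and perhaps a few unconstrained coordinates), not a genuine $\vecx \in \{-1,1\}^m$.

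\emph{Stage 2: from the relaxed problem to $\SBP_{\sbpError}$, plus parameter bookkeeping.} To turn a bounded-integer solution into a true $\{-1,1\}^m$ one, I would use a binary-decomposition gadget: replace each column $\veca$ of the average-case matrix by a group $\veca, 2\veca, \dots, 2^{t}\veca$ — or, to keep entries honestly i.i.d., fresh Gaussians with geometrically growing variances, reconciled with the plain model at the end — so that a signed $\{-1,1\}$ combination over one group realizes any integer coefficient in a window of width $2^{t+1}$ up to an additive constant that can be absorbed. Taking $t = O(\log m)$ covers the polynomial range from Stage 1 and inflates the number of columns from $m$ to $m\cdot\polylog(m)$ (or by an $m^{\varepsilon}$ factor, once one also pays to drive the statistical error and the relaxation slack to zero). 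This inflation is the \emph{only} gap between what we prove and the Bansal--Spencer / statistical value $\sbpError(x)=x^{-1/2}$: since the admissible $\ell_\infty$ bound scales like $\sbpError(m/n)\sqrt{m}$ and it is the aspect ratio $x=m/n$ that degrades, a $\polylog$ blow-up in $m$ costs only a $\polylog$ factor in $\sbpError$. Tracking how $\gamma(n)$, the blow-up, and the achieved $\ell_\infty$ bound depend on $s$ then yields the two quantitative regimes: all parameters polynomial gives the first statement ($n^{O(1/\varepsilon)}$-hard lattice problems $\Rightarrow$ $\SBP_{\sbpError}$ with $\sbpError(x)=x^{-1/2-\varepsilon}$ hard, via a polynomial-time reduction), while pushing $s$ and the gadget depth to their limits and allowing a subexponential-time reduction upgrades near-optimal lattice hardness ($\gamma(n)=2^{n^{1/2-\varepsilon}}$ needing $2^{\omega(n^{1/2-\varepsilon})}$ time) to $\SBP_{\sbpError}$ with $\sbpError(x)=1/(\sqrt{x}\,(\log x)^{c})$ hard, matching the Gamarnik--Kızıldağ--Perkins--Xu conjecture up to the $(\log x)^{c}$ factor. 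The $\NPP$ case ($n=1$) cannot route the lattice dimension through $n$, so there one instead bit-packs a $\polylog(m)$-dimensional lattice instance into a single linear form; chasing the packing precision, the number of samples, and the Gaussian width through this encoding is what produces the $2^{-\Omega(\log^3 m)}$ bound (and why subexponential lattice hardness is needed, as $m$ is subexponential in the lattice dimension).

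The main obstacle is the three-way tension inside Stages 1--2 among: (i) needing $s$ large enough that the reduced matrix is statistically indistinguishable from i.i.d.\ Gaussian and the smoothing / convolution arguments go through; (ii) needing $s$ small enough that the vector pulled back from the average-case solution actually beats the current distance estimate — so the worst-case iteration makes progress — and that the final $\ell_\infty$ quality is good; and (iii) keeping the aspect-ratio blow-up in the binary gadget down to $\polylog$ (or $m^{\varepsilon}$), since anything larger would push the provable $\sbpError$ past the conjectured threshold and make the theorem vacuous. Getting all three to hold simultaneously — in particular, making the gadget compatible with the i.i.d.\ Gaussian (or uniform) requirement on $\matA$ while losing at most a $\polylog$ factor — is where I expect the real work to lie.
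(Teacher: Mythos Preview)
Your Stage~1 is close in spirit to what the paper does, but Stage~2 is a phantom: there is no ``relaxed'' intermediate problem to bridge, and the binary-decomposition gadget is entirely unnecessary. The $\SBP$ oracle, by definition, hands back $\vecx \in \{-1,1\}^m$; when you feed it the near-Gaussian matrix $\matA$ produced in Stage~1, you already have a $\{-1,1\}^m$ combination, and that is exactly what the $\IncGDD$ progress step consumes. You appear to have mentally routed the reduction through SIS (where the oracle may return $\vecx$ with $\norm{\vecx}_\infty$ merely polynomially bounded) and then worried about tightening the output---but the direction of reduction is such that a $\{-1,1\}^m$-oracle is \emph{stronger} than a bounded-integer oracle, so the bridge from ``relaxed'' to $\SBP$ is the identity map, not a gadget. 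Indeed the paper notes (Section~\ref{sec:variants-and-generalizations}) that the reduction still works if the oracle is \emph{weakened} to return $\vecx \in \{-1,0,1\}^m \setminus \{\zero\}$ or $\norm{\vecx}_\infty \le B$; the $\{-1,1\}^m$ case is the easy one.

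This matters, because your gadget does not actually work: replacing a column $\veca$ by $\veca, 2\veca, \dots, 2^t\veca$ destroys both independence and unit variance, and ``fresh Gaussians with geometrically growing variances'' is not the $\SBP$ input distribution either, so the $\SBP$ oracle is under no obligation to succeed on such a matrix. Once you delete Stage~2, the ``three-way tension'' you flag collapses to a straightforward two-parameter choice. The paper samples \emph{continuous} Gaussians $\matU$ with standard deviation $\sigma_1 = r/(4m)$ (no discrete Gaussian over $L$ is needed, so there is nothing to bootstrap), reduces $\matS^{-1}(\matV+\matU)$ mod $\Z^n$ to get near-uniform $[0,1)^{n\times m}$, and then discrete-Gaussian-samples over shifted $\Z^n$ (not over $L$) to turn uniform entries into standard-Gaussian entries. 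Setting $m = n^{\Theta(1/\varepsilon)}$ makes $\sbpError(m/n)\sqrt{m} = n^{1/2+\varepsilon}/m^{\varepsilon} \le 1/(8n)$, which is all the shortness the $\IncGDD$ step needs; the subexponential regime just takes $m = 2^{n^{\Theta(1/c)}}$ instead. The whole argument is one oracle call per $\IncGDD$ iteration plus a few lines of bookkeeping, with no aspect-ratio blow-up beyond the choice of $m$ itself.
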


Lattice problems such as the shortest vector problem and the shortest independent vectors problem have been extensively studied for decades largely for their implications to combinatorial optimization~\cite{lenstra1982factoring,DBLP:conf/stoc/Kannan83,DBLP:journals/mor/Kannan87,DBLP:conf/focs/ReisR23} and even more so, to cryptography~\cite{DBLP:conf/stoc/Ajtai96,DBLP:journals/siamcomp/MicciancioR07,DBLP:journals/jacm/Regev09}. 
Time-approximation tradeoffs for lattice problems are well-known~\cite{DBLP:journals/tcs/Schnorr87}.
The best known algorithms for these lattice problems employ \emph{lattice reduction} techniques, based on the LLL and BKZ algorithms~\cite{lenstra1982factoring,DBLP:journals/tcs/Schnorr87}. They currently all have the following time-approximation trade-off: For a tunable parameter $k$, in dimension $n$, it is possible to solve lattice problems with approximation factor $\gamma = 2^{\widetilde{O}(n/k)}$ in time $2^{\widetilde{O}(k)}$, where $\widetilde{O}(\cdot)$ hides polylog factors in $n$. Equalizing these terms gives a $2^{\widetilde{O}(\sqrt{n})}$-time algorithm to solve $2^{\widetilde{O}(\sqrt{n})}$-approximate lattice problems. Despite extensive study in the lattice literature, no better algorithms are known (that improve the exponent by more than a polylog factor). This motivates the assumptions in our theorem statement, both the conservative one and the near-optimal one. (For more discussion on this, see Section~\ref{latticeprelims}).

Our reduction has several additional features: first, our reduction is versatile and works with respect to different distributions of the matrix $\matA$ (e.g., i.i.d. uniform entries from $[0,1]$); and secondly, it shows the hardness not just of computing an SBP solution with $1-o(1)$ probability, but indeed with any non-trivial inverse polynomial probability. 
For more implications of our reduction, we refer the reader to 
Section~\ref{sec:variants-and-generalizations}. Moreover, our full reduction is conceptually simple and direct. We discuss the possibility of using other intermediate problems to establish these same results in \Cref{sec:alternate-reductions}.

\paragraph{Number Partitioning (or Number Balancing).} The (average-case) {\em number partitioning} problem is a special case of SBP and corresponds to setting $n=1$ in SBP. Given $m$ random numbers $a_1,\ldots,a_m \sim \mathcal{N}(0,1)$ with entries chosen i.i.d. from the standard normal distribution, the goal is to find a binary vector $\vecx \in \{-1,1\}^m$ such that 
$|\sum_{i=1}^m x_i a_i |$ is minimized. 
More formally, $\NPP$ with 
parameter $\sbpError : \Z_{\geq 1} \to [0,1]$ asks us to find an $\vecx \in \{-1,1\}^m$ such that 
$$\left|\sum_{i=1}^m x_i a_i \right|  \leq \nppError(m) \cdot\sqrt{m} $$

Number partitioning, as a worst-case problem, was one of the six original NP-complete problems in the classic book of Garey and Johnson~\cite{DBLP:books/fm/GareyJ79}. The worst-case version of the problem, where $a_1,\ldots,a_m$ are arbitrary, is closely related to discrepancy problems and has been extensively studied; see \cite{spencer,DBLP:conf/focs/Bansal10,DBLP:journals/siamcomp/LovettM15,DBLP:conf/ipco/LevyRR17,DBLP:conf/focs/Rothvoss14,DBLP:conf/ipco/HobergRRY17}. In particular, \cite{DBLP:conf/ipco/HobergRRY17} show that there are (worst-case to worst-case) reductions between $\NPP$ and worst-case lattice problems.\footnote{Technically, the problem they consider is the number balancing problem, where they require the weaker condition on the solution $\vecx$ that $\vecx \in \{-1, 0, 1\}^m \setminus \{\zero\}$ instead of $\vecx \in \{-1, 1\}^m$. As we explain later (\Cref{sec:variants-and-generalizations}), our reduction works in this setting as well.} Their reduction from worst-case lattice problems to $\NPP$ \cite[Theorem 9]{DBLP:conf/ipco/HobergRRY17} shows hardness for $\nppError(m) \leq 2^{-m/2}$. Looking ahead, we show computational hardness for the average-case version of $\NPP$ and for the much tighter range of $\nppError(m) = 2^{- \polylog(m)}$.

An application of the pigeonhole principle shows that solutions exist, both in the worst case and on average (even with high probability \cite{karmarkar1986probabilistic}), for $$\kappa_{\mathsf{stat}}(m) = 2^{-m}.$$ 
However, the best solutions found by polynomial-time algorithms satisfy 
$$ \sbpError_{\mathsf{comp}}(m) = 2^{-O(\log^2 m)}.$$
This comes from the beautiful ``differencing algorithm'' of Karmarkar and Karp~\cite{KK82} which starts with a list; sorts it; replaces the largest and second largest element with their absolute difference; and repeats until a single element is left which the algorithm outputs as the discrepancy of the set of numbers. (An informed reader might already have observed the analogy of Karmarkar-Karp with the Blum-Kalai-Wasserman~\cite{DBLP:journals/jacm/BlumKW03} algorithm, which came much later.) A subsequent work of Yakir~\cite{Yakir} proved that their algorithm indeed achieves the claimed discrepancy of $ \sbpError_{\mathsf{comp}}(m) = 2^{-O(\log^2 m)}.$ This remains the best algorithm known to date. 

Gamarnik and K{\i}z{\i}lda\u{g}~\cite{gamarnik2021algorithmicobstructionsrandomnumber} studied the statistical-computational gap in depth, demonstrated an overlap gap property at $$\kappa^*(m) = 2^{-\omega\left(\sqrt{m\log m}\right)},$$ implying that the class of stable algorithms will fail to find solutions for such small $\kappa$. This leaves open the question of the ground truth: are there improvements to Karmarkar-Karp, stable or not, that efficiently solve $\NPP_{\nppError}$ with $\nppError(m) = 2^{-m^{\Omega(1)}}$?

Our second contribution is proving that the answer to this question is \emph{no}, in a quantitatively strong way. In particular, under the assumption that standard, well-studied, lattice problems are sub-exponentially hard to approximate in the worst case, we prove that Karmarkar-Karp is tight, up to a logarithmic factor in the exponent.

\begin{theorem}[Informal version of \Cref{sivp-to-npp}]\label{informal-npp-result}
Suppose $\nppError(m) = 2^{-\log^{3 + \varepsilon} m}$ for some constant $\varepsilon > 0$. Assuming near-optimal hardness of worst-case lattice problems, then $\NPP_{\nppError}$ is hard for polynomial-time algorithms. In particular, assuming that $\gamma(n)$-approximate lattice problems in dimension $n$ require $2^{\omega(n^{1/2 - \varepsilon})}$ time to solve in the worst case with $\gamma(n) = 2^{n^{1/2 - \varepsilon}}$, we have that $\NPP_{\nppError}$ (in dimension $m$) is hard for $\poly(m)$-time algorithms.
\end{theorem}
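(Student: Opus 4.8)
Since $\NPP$ is literally $\SBP$ with $n=1$, the plan is to follow the blueprint of the reduction behind \Cref{main-sbp-intro} (namely \Cref{sivp-to-sbp}), which turns a worst-case lattice problem in some dimension $d$ into an average-case perceptron instance with $d$ rows and $m$ columns, but to fold the $d$ rows into the digit expansion of a \emph{single} high-precision entry per column. Concretely: start from worst-case $\gamma(d)$-$\SIVP$ in dimension $d$, use a discrete-Gaussian sampler to draw $m$ lattice points $\vecy_1,\dots,\vecy_m$ of width $\sigma$ slightly above the smoothing parameter, and build the $i$-th $\NPP$ number $a_i$ by writing (suitably rounded and, if needed, reduced) coordinates of $\vecy_i$ as the successive base-$B$ digits of $a_i$, with $B$ large enough that the per-digit quantities never collide. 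A smoothing argument — exactly the place where worst-case structure is traded for an average-case distribution — should show that $(a_1,\dots,a_m)$ is within negligible statistical distance of the target product distribution (i.i.d.\ uniform $[0,1]$, which the paper already advertises as a supported case, or i.i.d.\ $\mathcal N(0,1)$ with a bit more care), while a short combination $\sum_i x_i \vecy_i$ of the samples is a short lattice vector.

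\textbf{Decoding.}
For any $\vecx\in\{-1,1\}^m$,
\[
\sum_i x_i a_i \;=\; \sum_{j=1}^{d} B^{-j}\, s_j \;+\; \delta_0,
\]
where $s_j$ is the (integer, $\poly(m)$-bounded) $j$-th digit-sum and $|\delta_0|$ is controlled by the rounding. If the $\NPP$ solver returns $\vecx$ with $\bigl|\sum_i x_i a_i\bigr|\le\nppError(m)\sqrt m$ and $\nppError(m)$ is small enough, then peeling the digits off one scale at a time (round $B\cdot\sum_i x_i a_i$ to recover $s_1$ exactly, subtract, rescale, repeat) forces every $s_j$ to be small, and with the right calibration forces $\sum_i x_i\vecy_i$ (read through the digits) to be a short lattice vector. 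Standard bookkeeping then makes it nonzero (the map $\vecx\mapsto\sum_i x_i\vecy_i$ has a small-probability kernel for fresh Gaussian samples, and one re-randomizes on failure) and, by repeating the reduction, yields $d$ linearly independent short vectors, solving $\gamma(d)$-$\SIVP$. Since everything after invoking the $\NPP$ solver is deterministic in the returned $\vecx$, no independence between $\vecx$ and the instance is needed, and the reduction tolerates solvers that succeed with only inverse-polynomial probability, matching the ``$1/\poly$ success'' feature of the $\SBP$ reduction.

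\textbf{Parameters.}
Two constraints pin down the bound. To turn a $\poly(m)=2^{O(\log m)}$-time $\NPP$ solver into an algorithm that contradicts the assumed $2^{\omega(d^{1/2-\varepsilon})}$ lower bound, the lattice dimension must satisfy $\log m = o(d^{1/2-\varepsilon})$, i.e.\ $d = (\log m)^{2/(1-2\varepsilon)} = (\log m)^{2+O(\varepsilon)}$, and the approximation factor the reduction produces (roughly $\sigma\sqrt m$ relative to the successive minima, which is $2^{\Theta(\log m)}$) must be at most $\gamma(d)=2^{d^{1/2-\varepsilon}}$ — both met in this regime. For the digit-peeling to go through, $\nppError(m)$ must be below about $B^{-d}$; since each digit of $a_i$ needs $\Theta(\log m)$ bits (to hold a $\poly(m)$-bounded digit-sum and, more stringently, so that the truncated $a_i$ is statistically close to a genuine sample), $B = 2^{\Theta(\log m)}$ and $\log(1/\nppError(m)) = \Theta(d\log m) = (\log m)^{2+O(\varepsilon)}\cdot\log m = (\log m)^{3+O(\varepsilon)}$, which is the claimed $\nppError(m)=2^{-\log^{3+\varepsilon}m}$ after adjusting the constant in the exponent. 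So the three powers of $\log m$ split as two from needing the lattice dimension polylogarithmically large enough that $2^{\sqrt d}$ beats every polynomial in $m$, plus one from the $\Theta(\log m)$ bits of padding per coordinate.

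\textbf{Main obstacle.}
The crux is the step I have deliberately left schematic: choosing $B$, $\sigma$, the rounding precision, and the digit layout so that the packed numbers are simultaneously (a) negligibly close to i.i.d.\ samples from the claimed distribution and (b) decodable, with a tiny $\NPP$ residual provably forcing a short \emph{nonzero} lattice vector. Part (a) is the worst-case-to-average-case core inherited from the $\SBP$ reduction — balancing the Gaussian width against the per-coordinate precision — and part (b) adds the packing constraints tying $B$ to $d$ and to the rounding; keeping the per-coordinate cost at $\Theta(\log m)$ (rather than something super-logarithmic, which would push the exponent above $3$) while still getting (a) is the delicate point. The rest — carry management in the peeling, killing the kernel, assembling linearly independent short vectors for $\SIVP$, and the routine passage between the $\{-1,1\}^m$ and $\{-1,0,1\}^m\setminus\{\zero\}$ forms of the solution — should be standard.
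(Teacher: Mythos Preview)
Your overall plan and parameter accounting are right, and they land on the same $\kappa(m)=2^{-\log^{3+\varepsilon}m}$ for the same reason the paper does: the lattice dimension must be roughly $(\log m)^{2+O(\varepsilon)}$ to contradict the subexponential assumption, and packing $d$ coordinates into one scalar costs another $\Theta(\log m)$ per coordinate. But the two arguments diverge at the compression step, and your version of it has a genuine gap in the place you flag as schematic.

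\textbf{The gap.} You propose to pack the standard-basis coordinates of discrete-Gaussian \emph{lattice points} $\vecy_i\in\Lambda$ as base-$B$ digits of $a_i$. Those coordinates are arbitrary reals with no boundedness and no basis-independent distribution; no smoothing argument will make the packed $a_i$ i.i.d.\ from a fixed law. What smoothing \emph{does} buy you --- and what the paper uses, lifting the SBP reduction verbatim --- is that a wide \emph{continuous} Gaussian reduced mod the lattice and then multiplied by $\matS^{-1}$ is statistically close to $U([0,1)^{d})$. The object to compress is a column of that uniform matrix $\matA\in[0,1)^{d\times m}$, not raw lattice-sample coordinates; the short lattice vector is then recovered from $\matA\vecx$ via the SBP back-end, not directly as $\sum_i x_i\vecy_i$.

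\textbf{CRT versus base-$B$.} Once you have uniform-on-$[0,1)^d$ columns, either compression works and yields the same $\log(1/\kappa)\approx d\log m$. The paper rounds row $j$ to precision $1/p_j$ for distinct primes $p_j=\Theta(dm)$ and applies the scaled Chinese Remainder isomorphism $\widetilde\varphi:\bigoplus_j \tfrac{1}{p_j}\Z/p_j\Z\to\tfrac{1}{q}\Z/q\Z$ column-wise (then adds $U([0,1/q))$ noise and discrete-Gaussian-shifts to get the target marginal). Because $\widetilde\varphi$ is a $\Z$-linear group isomorphism, invertibility and wraparound are handled for free: $\widetilde\varphi^{-1}$ applied to the (tiny) NPP residual directly bounds $\lfloor\matA\rfloor_{\vecp}\vecx$ coordinate-wise, and the rounding residual $(\matA-\lfloor\matA\rfloor_{\vecp})\vecx$ is the small SBP error. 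Your base-$B$ route is a legitimate alternative, but your decoding description is slightly off: with $\kappa$ this small you force every integer digit-sum $s_j$ to be \emph{zero}, not merely small, and the short SBP error that feeds the lattice step is the rounding residual, bounded by $m/B$. The CRT route sidesteps the carry bookkeeping entirely.
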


Similar to the case of our $\SBP$ result, this theorem is quite versatile. We refer the reader to the technical overview and \Cref{sec:nbp} for more details. Like in the case $\SBP$, our full reduction is conceptually simple and direct, and we discuss the possibility of using other intermediate problems to establish these same results in \Cref{sec:alternate-reductions}.

\paragraph{Adaptive Robustness of Johnson-Lindenstrauss.} The Johnson-Lindenstrauss lemma \cite{johnson1984extensions} states that for all fixed, small, finite sets $S \subseteq \R^m$, for random $\matA \sim \Norm(0,1)^{n \times m}$, the linear map given by $\matA$ embeds $S$ into $\R^n$ in a way that approximately preserves all $\ell_2$ norms (up to a $\sqrt{n}$ normalization term). Slightly more concretely, 
\[ \forall \text{ small, finite } S \subseteq \R^m,\;\;\;\Pr_{\matA \sim \Norm(0, 1)^{n \times m}}\left[ \forall \vecx \in S,\;\norm{\matA \vecx}_2 \approx \sqrt{n} \norm{\vecx}_2 \right] = 1 - o(1). \]
This statement crucially relies on the fact the set $S$ that is defined independently of $\matA$. For example, even considering only singleton sets $S$, one can ask whether the order of quantifiers can be switched so that $\vecx$ can be chosen adaptively based on $\matA$, in the following sense:
\[ \Pr_{\matA \sim \Norm(0, 1)^{n \times m}}\left[ \forall \vecx \in \R^m,\;\norm{\matA \vecx}_2 \approx \sqrt{n} \norm{\vecx}_2 \right] \stackrel{?}{=} 1 - o(1). \]
Or even weaker, for a function $\sbpError : \R_{>1} \to (0,1]$,
\[ \Pr_{\matA \sim \Norm(0, 1)^{n \times m}}\left[ \forall \vecx \in \R^m,\;\norm{\matA \vecx}_2 \geq \norm{\vecx}_2  \sbpError(m/n) \sqrt{n}\right] \stackrel{?}{=} 1 - o(1). \]
However, this is impossible. For $m > n$, one can find a vector $\vecx \in \ker(\matA)$ (so $S = \{\vecx\}$), making $\norm{\matA \vecx}_2 = 0$ while $\norm{\vecx}_2$ can be arbitrarily large.

A natural question is whether this phenomenon could hold if we constrain $\vecx \in \R^m$ to some structured set, e.g., $\{-1, 1\}^m$:
\[ \Pr_{\matA \sim \Norm(0, 1)^{n \times m}}\left[ \forall \vecx \in \{-1, 1\}^m,\;\norm{\matA \vecx}_2 \geq \sbpError(m/n) \sqrt{nm}\right] \stackrel{?}{=} 1 - o(1). \]
This question is exactly the statistical capacity of $\SBP_{\kappa}$, with the exception that the norm on $\matA \vecx$ has changed from $\ell_{\infty}$ to $\ell_2$ (which differ by only a $\sqrt{n}$ factor at most).

Therefore, we view $\SBP_{\kappa}$ as defining a natural adaptive robustness question about a certain discretized version of the Johnson-Lindenstrauss lemma. In particular, since $\SBP_{\kappa}$ exhibits a computational-statistical gap, so does this variant of the Johnson-Lindenstrauss lemma. Given the utility of the Johnson-Lindenstrauss lemma in compressed sensing, dimensionality reduction, and more, we believe that this interpretation may inspire connections between worst-case lattice problems and adaptive robustness of downstream applications of the Johnson-Lindenstrauss lemma. 

\paragraph{Open Questions and Future Directions.} A direct open question raised by our results is to better understand the gap between \Cref{informal-npp-result} and Karmarkar-Karp~\cite{KK82}. Specifically, our result shows hardness for $\nppError(m) = 2^{-\log^{3 + \varepsilon} m}$, but Karmarkar-Karp gives a polynomial time algorithm that achieves $\nppError(m) = 2^{-\Theta(\log^2 m)}$. Can Karmarkar-Karp be improved to $\nppError(m) = 2^{-\Theta(\log^3 m)}$, can the hardness shown in \Cref{informal-npp-result} be improved, or is the truth somewhere in the middle? 

For $\SBP$, one can ask what happens in the setting where $m = \Theta(n)$, i.e., so the ratio $m/n$ is $\Theta(1)$. The proof of our hardness result crucially needs $m \geq n^{\Theta(1/\epsilon)}$, with no setting of our parameters yielding $m = \Theta(n)$. Can one extend hardness like in \Cref{main-sbp-intro} to this proportional setting?

Another question one can ask is related to the \emph{asymmetric} binary perceptron ($\ABP_{\sbpError}$) problem, also called the asymmetric Ising perceptron problem, which for $\matA \sim \Norm(0, 1)^{n \times m}$, asks to find $\vecx \in \{-1, 1\}^m$ such that
\[ \matA \vecx \geq \sbpError(m/n) \sqrt{m} \cdot \one, \]
in the sense that for every row $\veca_j \in \R^m$ for $j \in [n]$, we want $\veca_j^\top \vecx \geq \sbpError(m/n) \sqrt{m}$. (For more details, see e.g.,  \cite{DBLP:conf/focs/GamarnikK0X22}.) Note that unlike $\SBP_{\sbpError}$, setting $\sbpError(x) = 0$ still defines a meaningful problem. Does $\ABP$ share similar hardness results (from worst-case problems)? If so, are lattice problems the source of such hardness?

More generally, we can ask the \emph{converse} questions of the ones raised in our paper. Can lattice algorithms be used to improve algorithms for $\NPP$, $\SBP$ or $\ABP$? Recent work has given some hope along these lines, showing that lattice-based methods outperform sum-of-squares~\cite{DBLP:conf/colt/ZadikSWB22}.

We leave all of these as fascinating open questions and future directions of our work.

\paragraph{Related work.} Recently, various optimization and learning problems have been shown to have lattice-based hardness. The formative work of Bruna, Regev, Song and Tang~\cite{DBLP:conf/stoc/Bruna0ST21} defined a variant of the standard cryptographic Learning With Errors (LWE) problem called \emph{Continuous Learning With Errors} (CLWE), which is an average-case, periodic, linear regression task, which they showed is as hard as worst-case lattice problems. This and subsequent works have used CLWE as an intermediate problem to show worst-case lattice hardness of learning mixtures of Gaussians, learning single periodic neurons, learning halfspaces, detecting planted backdoors in certain machine learning models, and more \cite{DBLP:conf/stoc/Bruna0ST21,DBLP:conf/nips/SongZB21,DBLP:conf/focs/GupteVV22,DBLP:conf/nips/DiakonikolasKMR22,DBLP:conf/icml/DiakonikolasKR23,DBLP:conf/colt/Tiegel23,DBLP:conf/focs/GoldwasserKVZ22}.

\subsection{Technical Overview}
For both of our reductions, we take direct inspiration from worst-case to average-case reductions in the lattice literature~\cite{DBLP:conf/stoc/Ajtai96,DBLP:journals/siamcomp/MicciancioR07}. Our exposition and proofs closely follow \cite{DBLP:journals/siamcomp/MicciancioR07}. We view our work as bringing techniques from worst-case to average-case reductions in the lattice literature to closely related problems in statistics and discrete optimization. 

\subsubsection{Reduction to \texorpdfstring{$\SBP$}{SBP}}

We outline the main ideas behind the reduction from the worst-case lattice problems to $\SBP$. For an invertible matrix $\matB \in \R^{n \times n}$, let $\LL(\matB)$ denote the lattice generated by (the columns of) $\matB$, i.e.,
\[ \LL(\matB) = \{\matB \vecz : \vecz \in \Z^n\}.\]
Let $\PP(\matB)$ denote the \emph{fundamental parallelepiped} of $\matB$, given by the set $\matB [0,1)^n$.

To illustrate the ideas in our reduction, consider the following (informal) worst-case lattice problem: given some invertible lattice basis $\matB \in \R^{n \times n}$, output a (non-zero) vector $\vecs \in \LL(\matB)$ slightly smaller than the columns in $\matB$. (While this is not quite the worst-case lattice problem we reduce from, it is simpler to describe this way and captures all of the main ideas. See \Cref{def:incgdd} for more precise details.)

The key idea we employ is \emph{smoothing} using Gaussian measures, as introduced by Micciancio and Regev~\cite{DBLP:journals/siamcomp/MicciancioR07}. Specifically, here is the critical point: For an \emph{arbitrary} invertible $\matB \in \R^{n \times n}$ and for $\vecu \sim \Norm(\zero, \sigma^2 \matI_n)$, as long as $\sigma$ is large enough, $\vecu \pmod{\LL(\matB)}$ looks \emph{statistically} close to uniform over $\PP(\matB)$. This holds as long as $\sigma$ is larger than the \emph{smoothing parameter} of the lattice $\LL(\matB)$, defined by \cite{DBLP:journals/siamcomp/MicciancioR07}, which is basis-independent. Transforming by $\matB^{-1}$, we see that
\[ \matB^{-1} \vecu \pmod{\Z^n} \approx U([0,1)^n),\]
where $\approx$ denotes small total variation distance, and $U(\cdot)$ denotes the uniform distribution. Thus, we have converted worst-case structure into average-case structure (that is independent of $\matB$).

We can repeat this process $m$ times as follows: sample $\matU \sim \Norm(\zero, \sigma^2 \matI_n)^m$, where now
\[ \matB^{-1} \matU \pmod{\Z^{n \times m}} \approx U([0,1)^{n \times m}).\]
We then set $\matA = \matB^{-1} \matU \pmod{\Z^{n \times m}}$ and feed $\matA$ into the $\SBP$ solver. (For simplicity, in this overview, we assume that $\SBP$ allows input matrices that are uniform over $[0,1)^{n \times m}$ instead of Gaussian, but in the proof, we resolve this distinction by sampling from an appropriate discrete Gaussian distribution.) We get back some $\vecx \in \{\pm 1\}^m$ so that $\norm{\matA \vecx}_{\infty} \leq \sbpError(m/n) \sqrt{m}$, or in other words, 
\[ \matA \vecx - \vece = \zero, \]
where $\norm{\vece}_{\infty} \leq \sbpError(m/n) \sqrt{m}$. Since $\vecx \in \Z^m$, this implies
\[ \matB^{-1} \matU \vecx - \vece = \zero \mod \Z^{n \times m}. \]
Multiplying by $\matB$ on the left gives
\[ \matU \vecx - \matB \vece = \zero \mod \LL(\matB),\]
or in other words, $\matU \vecx - \matB \vece \in \LL(\matB)$. To see that $\matU \vecx - \matB \vece$ is slightly smaller than the vectors in $\matB$, note that $\norm{\matU \vecx}_2$ can be upper-bounded by a basis-independent quantity, since $\sigma$ was chosen basis-independently. The bottleneck term is $\norm{\matB \vece}_2$, which is smaller than columns of $\matB$ as long as $\norm{\vece}_2$ is sufficiently small. If $\sbpError(x) = 1/x^{1/2 + \varepsilon}$ for $\varepsilon > 0$, then
\[ \norm{\vece}_{\infty} \leq \sbpError(m/n) \sqrt{m} = \frac{n^{1/2 + \varepsilon}}{m^{\varepsilon}}, \]
so as long as we set $m$ large enough so that $m^\varepsilon \gg n^{1/2 + \varepsilon}$, we can force $\norm{\matB \vece}_2$ to be small enough to produce a smaller lattice vector than anything in $\matB$, as desired.

\paragraph{Allowing $\vecx \in \{-1, 0, 1\}^m$ instead of $\vecx \in \{\pm 1\}^m$.} One could define a variant of $\SBP$ where we just need $\vecx \in \{-1, 0, 1\}^m \setminus \{\zero\}$ instead of $\vecx \in \{\pm 1\}^m$. This is an easier problem, as including zero entries in $\vecx$ is a simple way to decrease $\norm{\matA \vecx}_{\infty}$. However, our reduction idea above actually \emph{still} works in this setting. For more details, see \Cref{sec:variants-and-generalizations}.

\subsubsection{Reduction to \texorpdfstring{$\NPP$}{NPP}}
The reduction to $\NPP$ is quite similar to the reduction to $\SBP$, but with one additional trick that converts between vectors and scalars. This trick has been used in \cite{RegevLectureNotes,DBLP:conf/tcc/BrakerskiV15} for a similar reason, and we follow their footsteps.
Explicitly, it is the \emph{Chinese remainder theorem}: for distinct primes $p_1, \dots, p_n$ and $q = \prod_{i \in [n]} p_i$, there is a group isomorphism
\[ \varphi : \bigoplus_{i \in [n]} \Z/ p_i \Z \longrightarrow \Z / q\Z. \]
We will scale this isomorphism so that
\[ \widetilde{\varphi} : \bigoplus_{i \in [n]} 1/p_i \cdot \Z/ p_i \Z \longrightarrow 1/q \cdot \Z / q\Z \]
is (a) invariant to integer shifts in the input and (b) $\Z$-linear, in the sense that $\widetilde{\varphi}(\vecx) = \vecc^\top \vecx \pmod{1}$ for some $\vecc \in \Z^n$.

Following the $\SBP$ reduction above, we can set $\matA = \matB^{-1} \matU \pmod{\Z^{n \times m}}$ and then appropriately ``round'' $\matA$ to get uniformly random $\matA' = \floor{\matA}_{\vecp} \in \left(\bigoplus_{i \in [n]} 1/p_i \cdot \Z/ p_i \Z \right)^m$. We then apply $\widetilde{\varphi}$ column-wise to $\matA'$ to get $\veca' \in (1/q \cdot \Z/q\Z)^m \subseteq [0,1)^m$. By adding small uniform noise to $\veca'$, we can get some $\veca \sim [0,1)^m$. We feed this into our $\NPP$ solver to get some $\vecx \in \{\pm 1\}^m$ such that $|\veca^\top \vecx| \leq \nppError(m) \sqrt{m}$. By using $\Z$-linearity of $\widetilde{\varphi}$, and assuming $\nppError$ is sufficiently small so that there are no ``wraparound'' errors in $\widetilde{\varphi}^{-1}$, we can recover a somewhat smaller lattice vector $\vecs \in \LL(\matB)$ than we started with, just like in the $\SBP$ reduction.

\subsubsection{Alternate Reduction Paths}\label{sec:alternate-reductions}
We briefly mention other paths to reduce worst-case lattice problems to $\SBP$ and $\NPP$, using existing intermediate problems and known worst-case to average-case reductions.

\paragraph{SBP.} Instead of starting from worst-case lattice problems, we could start with a noisy version of the \emph{short integer solutions} (SIS) problem in the $\ell_\infty$ norm, defined roughly as follows: Given random $\matA \sim (\Z/q\Z)^{n \times m}$, output $\vecx \in \Z^m \setminus \{\zero\}$ such that $\norm{\vecx}_{\infty}$ and $\norm{\matA \vecx \pmod{q} }_{\infty}$ are small.\footnote{One can remove the $\pmod{q}$ constraint to reduce from SIS over $\Z$, and this would work as well.} The original worst-case to average-case reductions for lattice problems indeed reduce worst-case lattice problems to (average-case) SIS \cite{DBLP:conf/stoc/Ajtai96,DBLP:journals/siamcomp/MicciancioR07}. One can reduce (noisy) SIS to $\SBP$ by adding small noise $\matE \sim U\left([0, 1/q)^{n \times m} \right)$ and setting
\[ \matA' = \frac{1}{q} \matA + \matE \sim U\left([0, 1)^{n \times m} \right).\]
(For simplicity, we assume here that $\SBP$ allows matrices with i.i.d. $U([0,1))$ entries instead of standard normal, but we can remove this assumption by sampling from the appropriate discrete Gaussian and scaling.) Feeding $\matA'$ into the $\SBP$ solver yields $\vecx \in \{\pm 1\}^m$ such that $\norm{\matA \vecx}_{\infty}$ is small, assuming $q\norm{\matE \vecx}_{\infty}$ is also sufficiently small. 

Another approach for $\SBP$ is to reduce directly from Continuous Learning With Errors (CLWE)~\cite{DBLP:conf/stoc/Bruna0ST21}. This average-case problem, which is known to be as hard as worst-case approximate lattice problems~\cite{DBLP:conf/stoc/Bruna0ST21,DBLP:conf/focs/GupteVV22}, asks to distinguish $(\matA, \vecs^\top \matA + \vece^\top \pmod{1})$ from $(\matA, \vecb^\top)$, where $\matA \sim \Norm(0,1)^{n \times m}$, $\vecs$ is drawn spherically from sufficiently large sphere, $\vece$ is drawn from a Gaussian with small variance, and $\vecb$ is a uniformly random vector mod $1$. By using integrality of $\vecx$ and simultaneous smallness of $\vecx$ and $\matA \vecx$, one can simply multiply the second argument on the right by $\vecx$ to distinguish from the uniform distribution modulo $1$. However, we find this CLWE approach somewhat unsatisfactory since it would show hardness of $\SBP$ assuming at least one of (a) \emph{quantum} worst-case hardness of \emph{search} lattice problems or (b) \emph{classical} worst-case hardness of \emph{decisional} lattice problems~\cite{DBLP:conf/stoc/Bruna0ST21,DBLP:conf/focs/GupteVV22}. On the other hand, our main result shows hardness of $\SBP$ under only \emph{classical} worst-case hardness of \emph{search} lattice problems. As search lattice problems are known to be harder than decisional ones, our main approach in this work yields a stronger result than the CLWE approach. Indeed, for those familiar with lattice-based cryptography, this should not come as a surprise: the difference between $\SBP$ and CLWE is reminiscent of the difference between SIS and the Learning With Errors (LWE) problems in lattice-based cryptography.

\paragraph{NPP.} Brakerski and Vaikuntanathan~\cite{DBLP:conf/tcc/BrakerskiV15} define a problem called the \emph{one-dimensional short integer solutions} problem (\textrm{1D}-\textrm{SIS}), defined roughly as follows: Given $\veca \sim (\Z / q \Z)^m$, output $\vecx \in \Z^m \setminus \{\zero\}$ such that  $\norm{\vecx}_{\infty}$ and $|\veca^\top \vecx \pmod{q}|$ are small, where $q$ is a product of $n$ primes.\footnote{Similarly here, one can remove the $\pmod{q}$ constraint and reduce from \textrm{1D}-\textrm{SIS} over $\Z$.} \cite{DBLP:conf/tcc/BrakerskiV15} shows a worst-case to average-case reduction from worst-case lattice problems in dimension $n$ to \textrm{1D}-\textrm{SIS}. One can reduce \textrm{1D}-\textrm{SIS} to $\NPP$ by adding small noise $\vece \sim U\left( [0, 1/q)^m\right)$ and setting
\[ \veca' = \frac{1}{q} \veca + \vece \sim U\left( \left[0, 1 \right)^m \right).\]
(For simplicity, we similarly assume here that $\NPP$ works for vectors with i.i.d. $U([0,1))$ entries.) Feeding $\veca'$ into the $\NPP$ solver yields $\vecx \in \{\pm 1\}^m$ such that $|\veca^\top \vecx|$ is small, assuming $q|\vece^\top \vecx|$ is also sufficiently small.
\\\\While these approaches would work, we view this introduction of parameter $q \in \Z$ or modulus reduction as extraneous and misleading. To show hardness of a discrete optimization problem in continuous Euclidean space ($\SBP$ and $\NPP$), we should ideally start from a problem that is itself a discrete optimization problem in continuous Euclidean space (worst-case lattice problems). There is no need to add premature discretization by introducing the parameter $q$ or unnecessary modular reduction.

\section{Preliminaries}
For a predicate $\varphi$, we use the notation $\one[\varphi] \in \{0,1\}$ to denote the indicator variable of whether $\varphi$ is true ($1$) or false ($0$). We use $\ln(\cdot)$ to denote the natural logarithm (base $e$) and $\log(\cdot)$ to denote $\log_2(\cdot)$. We use $\R_{>0}$ to refer to the set of all positive real numbers, and we use $\R_{\geq 1}$ to refer to the set of all real numbers that are at least $1$. We say a function $f : \N \to \R_{>0}$ is \emph{negligible} if for all $c \in \N$, 
\[ \lim_{n \to \infty} n^c \cdot f(n) = 0. \]
We say a function $f : \N \to \R$ is \emph{non-negligible} if there exists some $c \in \N$ such that $f(n) \geq 1/n^c$ for all sufficiently large $n$.

For two distributions $\mathcal{D}_1$, $\mathcal{D}_2$, we use the notation $\Delta(\mathcal{D}_1, \mathcal{D}_2)$ to denote the total variation distance between $\mathcal{D}_1$ and $\mathcal{D}_2$, which we refer to simply as the \emph{statistical distance} between the two distributions. We say that two distributions are \emph{statistically close} if their statistical distance is negligible (in some implicit parameter, typically $n$ or $m$ for us).

For a set $S$, we let $U(S)$ denote the uniform distribution over $S$. (If $S$ is not finite and $S \subseteq \R^n$ is Lebesgue measurable, this will be uniform with respect to the standard Lebesgue measure.)

\subsection{Norms \& Matrices}
We use the notation $\zero$ to denote the all $0$s vector in $\R^n$, where the dimension $n$ is clear from context. We use $\matI_n \in \R^{n \times n}$ to denote the $n$-dimensional identity matrix. We use the standard $\ell_1, \ell_2, \ell_{\infty}$ norms on $\R^n$. For a matrix $\matA \in \R^{n \times m}$, we use the notation $\sigma_{\max}(\matA)$ to denote the spectral norm, or maximum singular value, of $\matA$. Explicitly,
\[ \sigma_{\max}(\matA) = \max_{\vecv \neq \zero} \frac{\norm{\matA \vecv}_2}{ \norm{\vecv}_2}. \]
For a matrix $\matA \in \R^{n \times m}$, we often write $\matA$ by its columns, as in $\matA = [\veca_1, \veca_2, \cdots, \veca_m]$ for $\veca_i \in \R^n$. We sometimes abuse notation and move interchangeably between matrices $\matA \in \R^{n \times m}$ and tuples of $m$ vectors in $\R^n$ (as defined by the columns of $\matA$). For a matrix $\matA \in \R^{n \times m}$ given by $\matA = [\veca_1, \veca_2, \cdots, \veca_m]$, we use the notation
\[ \norm{\matA} = \max_{j \in [m]} \norm{\veca_j}_2. \]
We emphasize that $\norm{\matA}$ does \emph{not} refer to the standard spectral norm on matrices.
\begin{lemma}\label{bound-by-matrix-norm-and-ell-1}
    For $\matA \in \R^{n \times m}$ and $\vecv \in \R^m$, we have the inequality $\norm{\matA \vecv}_2 \leq \norm{\matA} \norm{\vecv}_1$.
\end{lemma}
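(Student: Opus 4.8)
The plan is to expand the matrix-vector product $\matA\vecv$ in terms of the columns of $\matA$ and then apply the triangle inequality followed by a termwise bound. Writing $\matA = [\veca_1,\dots,\veca_m]$ with $\veca_j \in \R^n$ and $\vecv = (v_1,\dots,v_m)^\top$, we have the identity $\matA\vecv = \sum_{j=1}^m v_j \veca_j$. Taking $\ell_2$ norms and applying the triangle inequality gives $\norm{\matA\vecv}_2 \le \sum_{j=1}^m |v_j|\,\norm{\veca_j}_2$. Now bound each $\norm{\veca_j}_2$ by $\max_{k\in[m]}\norm{\veca_k}_2 = \norm{\matA}$ (using the paper's nonstandard notation for $\norm{\matA}$), pull that constant out of the sum, and observe that what remains is $\sum_{j=1}^m |v_j| = \norm{\vecv}_1$. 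This yields $\norm{\matA\vecv}_2 \le \norm{\matA}\cdot\norm{\vecv}_1$, as desired.

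There is essentially no obstacle here: the statement is a one-line consequence of the triangle inequality and the definition of $\norm{\matA}$. The only thing to be mildly careful about is not confusing $\norm{\matA}$ (the max column $\ell_2$-norm, as defined just above the lemma) with the spectral norm $\sigma_{\max}(\matA)$ — the inequality would also hold with $\sigma_{\max}(\matA)$ in place of $\norm{\matA}$ via Cauchy–Schwarz, but the stated version with the column-norm bound is both tighter in the relevant direction and exactly what is needed downstream. I would present the three-line chain of inequalities directly and conclude.
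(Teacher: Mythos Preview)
Your proposal is correct and matches the paper's proof essentially line for line: expand $\matA\vecv$ as $\sum_j v_j \veca_j$, apply the triangle inequality, bound each $\norm{\veca_j}_2$ by $\norm{\matA}$, and recognize the remaining sum as $\norm{\vecv}_1$. Your side remark about $\sigma_{\max}$ versus $\norm{\matA}$ is also accurate but extraneous to the proof itself.
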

\begin{proof}
    Let $\matA = [\veca_1, \cdots, \veca_m]$, and let $\vecv$ have entries $v_j \in \R$. We have
    \[\norm{\matA \vecv}_2 = \norm{\sum_{j = 1}^m v_j \veca_j}_2 \leq \sum_{j = 1}^m \norm{ v_j \veca_j}_2 = \sum_{j = 1}^m |v_j| \norm{\veca_j}_2 \leq \sum_{j = 1}^m |v_j| \norm{\matA} = \norm{\matA} \norm{\vecv}_1.\]
\end{proof}

We also use the following basic fact.

\begin{lemma}\label{ell-1-to-ell-infty}
    For all $\vecv \in \R^n$, $\norm{\vecv}_1  \leq n  \norm{\vecv}_{\infty}$.
\end{lemma}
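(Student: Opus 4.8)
The plan is the completely routine one: unfold the definition of the $\ell_1$ norm as a sum over coordinates, and bound each coordinate's absolute value by the largest one. Concretely, write $\vecv$ with entries $v_1, \dots, v_n \in \R$, so that $\norm{\vecv}_1 = \sum_{i=1}^n |v_i|$. Since $|v_i| \leq \max_{j \in [n]} |v_j| = \norm{\vecv}_{\infty}$ for every index $i$, summing this inequality over the $n$ coordinates yields $\norm{\vecv}_1 \leq \sum_{i=1}^n \norm{\vecv}_{\infty} = n \norm{\vecv}_{\infty}$, which is exactly the claim.

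There is no real obstacle here; the only thing to be mildly careful about is the degenerate case $n = 0$ (empty vector), where both sides are $0$ and the inequality holds trivially, but this edge case can safely be ignored since the paper only ever applies the bound for $n \geq 1$. No auxiliary results from earlier in the excerpt are needed — this is a self-contained one-line estimate, stated separately only because it is invoked repeatedly (e.g., together with \Cref{bound-by-matrix-norm-and-ell-1}) to pass between $\ell_\infty$ guarantees on an $\SBP$/$\NPP$ solution and the $\ell_1$ or $\ell_2$ bounds needed to control $\norm{\matB \vece}_2$ in the lattice reductions.
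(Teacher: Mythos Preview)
Your proof is correct and is exactly the standard one-line argument; the paper in fact states this lemma as a ``basic fact'' without proof, so there is nothing to compare against beyond noting that your derivation is the obvious intended one.
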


\subsection{Lattices}\label{latticeprelims}

For an invertible matrix $\matB \in \R^{n \times n}$, an $n$-dimensional lattice generated by basis $\matB$, denoted $\LL(\matB)$, is given by all integer linear combinations of columns of $\matB$. That is,
\[ \LL(\matB) := \{ \matB \vecz : \vecz \in \Z^n\}. \]
We define the (half-open) parallelepiped $\PP(\matB)$ to be the set
\[ \PP(\matB) := \{ \matB \vecv : \vecv \in [0, 1)^n\}.\]
Note that for all $\vecx \in \R^n$, there exists unique $\vecy \in \PP(\matB)$ such that $\vecx - \vecy \in \LL(\matB)$. We use the notation $\vecy = \vecx \pmod \matB$ to denote the corresponding $\vecy \in \PP(\matB)$ for a given $\vecx \in \R^n$. Note that $\vecy$ is computable in polynomial time given $\matB$ and $\vecx$. For a lattice $\Lambda$, we denote the \emph{dual lattice} of $\Lambda$ as $\Lambda^*$, defined by
\[ \Lambda^* = \{ \vecx \in \R^n : \forall \vecv \in \Lambda, \inner{\vecx, \vecv} \in \Z\}.\]

For $i \in [n]$, we can define the $i$th successive minimum of a lattice $\Lambda$ to be the smallest $\lambda_i$ such that there exist $i$ linearly independent lattice points of $\ell_2$ norm at most $\lambda_i$. Letting $B$ denote the unit ball, this can be phrased as
\[ \lambda_i(\Lambda) := \min\{r : \dim(\Span(\Lambda \cap r B)) \geq i \}.\]
Note that $\lambda_1(\Lambda)$ is the \emph{minimum distance} of $\Lambda$.

We also define the \emph{covering radius} $\nu(\Lambda)$ of a lattice $\Lambda$, defined by
\[ \nu(\Lambda) = \max_{\vecx \in \R^n} \min_{\vecv \in \Lambda} \norm{\vecx - \vecv}_2. \]
That is, $\nu(\Lambda)$ is the smallest real number such that every element of $\R^n$ has distance at most $\nu(\Lambda)$ from (some point in) $\Lambda$.

We now define some fundamental (worst-case) lattice problems.

\begin{definition}[Shortest Independent Vectors Problem]
    For a parameter $\gamma : \N \to \R_{\geq 1}$, the \emph{shortest independent vectors problem ($\SIVP_{\gamma}$)} is a (worst-case) search problem defined as follows. Given an invertible basis $\matB \in \R^{n \times n}$ as input, output $n$ vectors $\matS = [\vecs_1, \cdots, \vecs_n] \in \R^{n \times n}$ such that the following hold:
    \begin{itemize}
        \item $\matS$ is linearly independent.
        \item For all $i \in [n]$, $\vecs_i \in \LL(\matB)$;
        \item $\norm{\matS} \leq \gamma(n) \cdot \lambda_n(\LL(\matB))$.
    \end{itemize}
\end{definition}

\begin{definition}[Covering Radius Problem]
    For a parameter $\gamma : \N \to \R_{\geq 1}$, the \emph{gap covering radius problem ($\GapCRP_{\gamma}$)} is a (worst-case) decision problem defined as follows. Given an invertible basis $\matB \in \R^{n \times n}$ and threshold $\theta \in \R_{>0}$ as input, output $1$ if $\nu(\LL(\matB)) \leq \theta$ and $0$ if $\nu(\LL(\matB)) > \gamma(n) \cdot \theta$.
\end{definition}

\begin{definition}[Guaranteed Distance Decoding]
    For a parameter $\gamma : \N \to \R_{\geq 1}$, the \emph{guaranteed distance decoding ($\GDD_{\gamma}$)} is a (worst-case) search problem defined as follows. Given an invertible basis $\matB \in \R^{n \times n}$ and target vector $\vect \in \R^n$ as input, output a lattice point $\vecx \in \LL(\matB)$ such that $\norm{\vect - \vecx}_2 \leq \gamma(n) \cdot \lambda_n(\LL(\matB))$.
\end{definition}
We note that $\GDD$ is often defined using $\nu(\cdot)$ instead of $\lambda_n(\cdot)$, as using $\nu(\cdot)$ guarantees solutions for all $\gamma \geq 1$. However, throughout this paper, we will be in the regime where $\gamma(n) \geq \sqrt{n}/2$, which guarantees a solution even with $\lambda_n(\cdot)$ because $\lambda_n(\Lambda) \geq \frac{2}{\sqrt{n}} \nu(\Lambda)$ for all full-rank lattices $\Lambda$ \cite[Lemma 4.3]{DBLP:conf/coco/GuruswamiMR04}.

We will also use an intermediate problem called Incremental Guaranteed Distance Decoding ($\IncGDD$) defined by \cite{DBLP:journals/siamcomp/MicciancioR07}.
\begin{definition}[Incremental GDD, Definition 5.6 of \cite{DBLP:journals/siamcomp/MicciancioR07}]\label{def:incgdd}
    For a parameter $\gamma : \N \to \R_{\geq 1}$, the \emph{incremental guaranteed distance decoding ($\IncGDD_{\gamma}$)} problem is a (worst-case) search problem defined as follows. Given as input:
    \begin{itemize}
        \item An invertible basis $\matB \in \R^{n \times n}$,
        \item A set $\matS$ of $n$ linearly independent vectors in $\LL(\matB)$ (represented as columns of $\matS \in \R^{n \times n}$),
        \item A target point $\vect \in \R^n$, and
        \item A parameter $r > \gamma(n) \cdot \lambda_n(\LL(\matB))$,
    \end{itemize}
    output some vector $\vecs \in \LL(\matB)$ such that $\norm{\vecs - \vect}_2 \leq r + \dfrac{\norm{\matS}}{8}$.
\end{definition}

\paragraph{Comparison to \cite{DBLP:journals/siamcomp/MicciancioR07}.} \Cref{def:incgdd} differs from \cite[Definition 5.6]{DBLP:journals/siamcomp/MicciancioR07} in two ways. First, instead of using $\lambda_n(\LL(\matB))$, \cite{DBLP:journals/siamcomp/MicciancioR07} considers general functions $\phi$ mapping $n$-dimensional lattices to $\R_{>0}$. Second, instead of fixing the constant $8$, \cite{DBLP:journals/siamcomp/MicciancioR07} parametrizes this more generally by some constant $g$.
\\\\As shown in~\cite{DBLP:journals/siamcomp/MicciancioR07}, there are reductions from these worst-case lattice problems to $\IncGDD$.
\begin{lemma}[Lemma 5.10 in \cite{DBLP:journals/siamcomp/MicciancioR07}]\label{sivp-to-incgdd}
    For any $\gamma(n) \geq 1$, there is a reduction from $\SIVP_{8\gamma}$ to $\IncGDD_{\gamma}$.
\end{lemma}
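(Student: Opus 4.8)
I would prove \Cref{sivp-to-incgdd} via the iterative-improvement (Turing) reduction of Micciancio--Regev. Starting from the $n$ linearly independent lattice vectors given by the columns of the input basis, the reduction repeatedly calls the $\IncGDD_\gamma$ oracle to shorten the \emph{longest} vector in its current tuple, until the whole tuple has length at most $8\gamma(n)\lambda_n(\LL(\matB))$. Concretely: on input an invertible basis $\matB\in\R^{n\times n}$, initialize $\matS=[\vecs_1,\dots,\vecs_n]\leftarrow\matB$, and for a polynomial number $N$ of rounds (determined by the analysis), perform the following improvement step. Pick $i$ with $\norm{\vecs_i}_2=\norm{\matS}$, and call $\IncGDD_\gamma$ on the instance $\bigl(\matB,\matS,\vect,r\bigr)$ with target $\vect=\tfrac12\vecs_i$ and parameter $r=\tfrac18\norm{\vecs_i}_2$, obtaining $\vecs$. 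Since $\vecs+(\vecs_i-\vecs)=\vecs_i\notin\Span\{\vecs_j:j\neq i\}$ by linear independence of $\matS$, at least one of $\vecs$ and $\vecs_i-\vecs$ is linearly independent from $\{\vecs_j\}_{j\neq i}$; call it $\vecs'$ and replace the $i$th column of $\matS$ by $\vecs'$. If the result is ever not a tuple of $n$ linearly independent vectors of $\LL(\matB)$ with strictly smaller $\norm{\cdot}$ (for example if the oracle returns $\vecs\notin\LL(\matB)$), halt and output the previous $\matS$; otherwise output $\matS$ after $N$ rounds.

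\paragraph{Progress of a round.} The first thing to check is that every round in which the goal has not yet been met makes geometric progress. Suppose $\norm{\matS}>8\gamma(n)\lambda_n(\LL(\matB))$ at the start of a round. Then $r=\tfrac18\norm{\matS}>\gamma(n)\lambda_n(\LL(\matB))$, and, inductively, $\matS$ is $n$ linearly independent vectors of $\LL(\matB)$, so the oracle is invoked on a legitimate $\IncGDD_\gamma$ instance and returns $\vecs\in\LL(\matB)$ with $\norm{\vecs-\vect}_2\le r+\norm{\matS}/8=\norm{\matS}/4$; the identical bound holds for $\norm{(\vecs_i-\vecs)-\vect}_2$. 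Hence whichever candidate is $\vecs'$ satisfies $\norm{\vecs'}_2\le\norm{\vect}_2+\norm{\vecs'-\vect}_2\le\tfrac12\norm{\matS}+\tfrac14\norm{\matS}=\tfrac34\norm{\matS}$, and both candidates lie in $\LL(\matB)$, so the new $\matS$ is again a valid tuple with strictly smaller $\norm{\cdot}$ and the halt is not triggered. Thus the process only halts (or stalls) after $\norm{\matS}\le 8\gamma(n)\lambda_n(\LL(\matB))$ already holds, which is precisely the $\SIVP_{8\gamma}$ requirement.

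\paragraph{Round count.} To bound the number of rounds I would use the potential $\Phi(\matS):=\prod_{j=1}^n\max\bigl(\norm{\vecs_j}_2,\lambda\bigr)$ with $\lambda:=\gamma(n)\lambda_n(\LL(\matB))$, noting $\Phi(\matS)\ge\lambda^n$ always. In a round with $\norm{\matS}>8\lambda$, column $i$'s factor drops from $\norm{\matS}$ to $\max(\norm{\vecs'}_2,\lambda)\le\max(\tfrac34\norm{\matS},\lambda)=\tfrac34\norm{\matS}$ while the other factors are unchanged, so $\Phi$ shrinks by a factor at least $4/3$. Since $\lambda_n(\LL(\matB))\le\norm{\matB}$ gives $\Phi(\matB)\le(\gamma(n)\norm{\matB})^n$, and $\lambda_1(\LL(\matB))\ge 1/\sigma_{\max}(\matB^{-1})$ together with $\gamma(n)\ge 1$ gives $\lambda^n\ge\sigma_{\max}(\matB^{-1})^{-n}$, the number of rounds with $\norm{\matS}>8\lambda$ is at most $n\log_{4/3}\bigl(\gamma(n)\norm{\matB}\sigma_{\max}(\matB^{-1})\bigr)$, which is polynomial in the bit-size of $\matB$ and in $\log\gamma(n)$. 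Setting $N$ one larger, within $N$ rounds some round starts with $\norm{\matS}\le 8\lambda$, after which $\norm{\matS}$ never increases and $\matS$ stays valid, so the output solves $\SIVP_{8\gamma}$; each round is polynomial time and a single oracle call, giving a polynomial-time reduction from $\SIVP_{8\gamma}$ to $\IncGDD_\gamma$.

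\paragraph{Main obstacle.} The delicate part is exactly this round-count argument. The reduction cannot detect when it has succeeded, since it never learns $\lambda_n(\LL(\matB))$; the argument must instead show that \emph{every} round that has not yet reached the target is automatically a legal oracle call, and that the adaptive choice $r=\norm{\vecs_i}_2/8$ — picked so that $r$ balances the $\IncGDD$ error term $\norm{\matS}/8$ — forces the potential $\Phi$, whose threshold $\gamma(n)\lambda_n(\LL(\matB))$ is visible only to the analysis, to decrease by a constant factor. A secondary subtlety is the ``one of $\vecs,\vecs_i-\vecs$ is linearly independent from the remaining columns'' step, which is what makes it possible to roughly halve the longest vector without collapsing the rank of $\matS$.
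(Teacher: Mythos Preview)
The paper does not prove this lemma at all; it simply cites it as Lemma~5.10 of \cite{DBLP:journals/siamcomp/MicciancioR07} and moves on. Your proposal is a faithful and correct reconstruction of the Micciancio--Regev iterative-improvement reduction: the choice $\vect=\tfrac12\vecs_i$, $r=\tfrac18\norm{\matS}$, the ``one of $\vecs,\vecs_i-\vecs$ preserves independence'' step, the $\tfrac34$-contraction of the longest vector, and the product-potential termination argument are exactly the ingredients of the original proof, and your treatment of the ``the reduction cannot see $\lambda_n$'' issue via the verifiable halt condition is the standard fix. There is nothing to compare against in the present paper beyond the citation.
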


\begin{lemma}[Combining Lemmas 5.11, 5.12 in \cite{DBLP:journals/siamcomp/MicciancioR07}]\label{gapcrp-to-incgdd}
    For any $\gamma(n) \geq 1$, there is a (randomized) reduction from $\GapCRP_{12\gamma}$ to $\IncGDD_{\gamma}$.
\end{lemma}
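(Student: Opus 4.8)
The plan is to obtain the lemma by composing two of the worst-case reductions of \cite{DBLP:journals/siamcomp/MicciancioR07} --- namely \cite[Lemmas 5.11 and 5.12]{DBLP:journals/siamcomp/MicciancioR07}, which between them give a (randomized) reduction from $\GapCRP_{c_1\gamma}$ to $\GDD_\gamma$ and a reduction from $\GDD_{c_2\gamma}$ to $\IncGDD_\gamma$ --- routing through the intermediate problem $\GDD$ and then checking that the approximation factors multiply to $c_1 c_2 = 12$. As with \Cref{sivp-to-incgdd}, one point to record is that \cite{DBLP:journals/siamcomp/MicciancioR07} works with an abstract length function $\phi$ and an abstract constant $g$, whereas \Cref{def:incgdd} fixes $\phi(\LL(\matB)) = \lambda_n(\LL(\matB))$ and $g = 8$; this specialization is harmless, since their proofs use nothing about $\phi$ beyond the scaling and monotonicity properties that $\lambda_n$ also enjoys, and in the regime $\gamma(n) \ge \sqrt{n}/2$ flagged after the definition of $\GDD$ --- the only regime we need --- every $\GDD$ and $\IncGDD$ instance produced along the way is guaranteed to have a solution via $\lambda_n(\LL(\matB)) \ge \frac{2}{\sqrt{n}}\,\nu(\LL(\matB))$ \cite[Lemma 4.3]{DBLP:conf/coco/GuruswamiMR04}.

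For the step $\GDD_{c_2\gamma} \to \IncGDD_\gamma$ I would argue as follows. Given a $\GDD$ instance $(\matB, \vect)$, first use the $\IncGDD$ oracle to build a set $\matS$ of $n$ linearly independent lattice vectors with $\norm{\matS} = O(\gamma(n)\cdot\lambda_n(\LL(\matB)))$: initialize $\matS \leftarrow \matB$, and repeatedly call the oracle on the current $\matS$, a target derived from the longest column of $\matS$, and a guessed value $r$ for an upper bound on $\gamma(n)\lambda_n(\LL(\matB))$, replacing that longest column by the returned lattice vector. Because the oracle's error is only $r + \norm{\matS}/8$, each successful step drives $\norm{\matS}$ down by a constant factor, so after $\poly(n)$ steps $\norm{\matS} = O(r)$; the correct scale $r$ is found by trying all $\poly(n)$ dyadic values between the easy lower and upper bounds on $\lambda_n(\LL(\matB))$ implied by the bit-length of $\matB$. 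One final call to $\IncGDD$ with target $\vect$ then returns a lattice point within $r + \norm{\matS}/8 = O(\gamma(n)\lambda_n(\LL(\matB)))$ of $\vect$, a valid answer for $\GDD_{c_2\gamma}$.

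For the step $\GapCRP_{c_1\gamma} \to \GDD_\gamma$ the idea is to use the $\GDD$ oracle to estimate $\nu(\LL(\matB))$ up to an $O(\gamma(n))$ factor. After rescaling the basis we may assume the instance is $(\matB, 1)$, so the task is to distinguish $\nu(\LL(\matB)) \le 1$ from $\nu(\LL(\matB)) > c_1\gamma(n)$. Sample $\poly(n)$ points $\vect$ whose reduction modulo $\LL(\matB)$ is statistically close to uniform over $\PP(\matB)$, decode each with the oracle to obtain $\vecs \in \LL(\matB)$, and let $\hat\nu$ be the largest value of $\norm{\vect - \vecs}$ observed. On one side, $\norm{\vect - \vecs} \le \gamma(n)\,\lambda_n(\LL(\matB)) \le 2\gamma(n)\,\nu(\LL(\matB))$ for every sample, so $\hat\nu \le 2\gamma(n)\,\nu(\LL(\matB))$; on the other side, $\norm{\vect - \vecs} \ge \dist(\vect, \LL(\matB))$, and a uniformly random point of $\PP(\matB)$ lies at distance $\Omega(\nu(\LL(\matB)))$ from $\LL(\matB)$ with non-negligible probability, so over $\poly(n)$ samples $\hat\nu = \Omega(\nu(\LL(\matB)))$ with high probability. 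Comparing $\hat\nu$ against an appropriate constant multiple of $\gamma(n)$ then separates the two cases. Making the ``a random point of $\PP(\matB)$ is $\Omega(\nu)$-deep'' claim precise --- via a Gram--Schmidt / volume estimate --- is the crux of this half, and is exactly what \cite{DBLP:journals/siamcomp/MicciancioR07} establishes.

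The step I expect to be the main obstacle is the iterative shortening inside $\GDD \to \IncGDD$: one must choose the target at each step so that the vector returned by $\IncGDD$ can legitimately replace the longest column of $\matS$ \emph{without collapsing the rank}, and one must pin down a potential --- essentially $\norm{\matS}$, or a Gram--Schmidt variant of it --- that provably shrinks by a constant factor per successful step, down to $\Theta(\gamma(n)\lambda_n(\LL(\matB)))$. Handling the case where a naive substitution would be linearly dependent --- by instead size-reducing the offending column against the others --- is where the factor-$8$ slack in \Cref{def:incgdd} is consumed, and it is the most delicate part of the bookkeeping. Since all of this is carried out in \cite{DBLP:journals/siamcomp/MicciancioR07}, the cleanest writeup simply invokes their Lemmas 5.11 and 5.12, verifies the $\phi = \lambda_n$ specialization, and tracks the overall constant $12$.
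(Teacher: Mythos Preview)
Your proposal is correct and matches the paper's treatment: the paper gives no proof of this lemma beyond citing \cite[Lemmas~5.11 and~5.12]{DBLP:journals/siamcomp/MicciancioR07}, and your plan is precisely to compose those two reductions through $\GDD$ and track the constant $12 = 4 \cdot 3$. Your additional sketches of the two steps, and your remark about specializing $\phi = \lambda_n$ and $g = 8$, are a reasonable summary of what \cite{DBLP:journals/siamcomp/MicciancioR07} does and go beyond what the paper itself records.
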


\begin{lemma}[Lemma 5.11 in \cite{DBLP:journals/siamcomp/MicciancioR07}]\label{gdd-to-incgdd}
    For any $\gamma(n) \geq 1$, there is a reduction from $\GDD_{3\gamma}$ to $\IncGDD_{\gamma}$.
\end{lemma}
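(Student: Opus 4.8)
The plan is to follow the standard lattice ``improve-then-decode'' template. Given a $\GDD_{3\gamma}$ instance $(\matB,\vect)$, we would first use LLL to produce a crude linearly independent set of lattice vectors, then repeatedly invoke the $\IncGDD_\gamma$ oracle to shrink that set, and finally invoke the oracle once more to decode $\vect$. The one subtlety is that $\IncGDD$ must be fed a number $r > \gamma(n)\cdot\lambda_n(\LL(\matB))$ which cannot be computed exactly, so we would run the whole procedure over a geometric family of guesses for $r$ and return the best lattice vector found.

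Concretely: run LLL on $\matB$ to obtain a linearly independent $\matS_0 \subseteq \LL(\matB)$ with $\norm{\matS_0} \le 2^n \lambda_n$, where $\lambda_n := \lambda_n(\LL(\matB))$; since $\matS_0$ is a set of $n$ linearly independent lattice vectors we also have $\lambda_n \le \norm{\matS_0}$, which pins $\lambda_n$ down to within a factor of $2^n$. For $i = 0, 1, \dots, n$ set $r_i := 2\gamma(n)\norm{\matS_0}/2^i$, run the procedure below with $r := r_i$, and at the very end output, over all $i$, whichever produced lattice vector is closest to $\vect$. It then suffices to prove correctness for the ``good'' guess $r^\star$, defined as the largest $r_i$ exceeding $\gamma(n)\lambda_n$; using $\norm{\matS_0}\le 2^n\lambda_n$, one checks that $\gamma(n)\lambda_n < r^\star \le 2\gamma(n)\lambda_n$.

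Fix $r := r^\star$ and $\matS := \matS_0$, and repeat the following while $\norm{\matS} > 4r$: take a longest column $\vecs_k$ of $\matS$ (so $\norm{\vecs_k} = \norm{\matS}$) and query $\IncGDD_\gamma$ on $(\matB,\matS,\vecs_k/2,r)$, which is a legal query since $r > \gamma(n)\lambda_n$, obtaining $\vecs \in \LL(\matB)$ with $\norm{\vecs - \vecs_k/2}_2 \le r + \norm{\matS}/8$. Both $\vecs$ and $\vecs - \vecs_k$ lie in $\LL(\matB)$ and have $\ell_2$ norm at most $r + \norm{\matS}/8 + \norm{\vecs_k}/2 = r + \tfrac{5}{8}\norm{\matS}$, and since they differ by $\vecs_k \notin \Span\{\vecs_j : j \neq k\}$, at least one of them is linearly independent of the remaining columns; replace $\vecs_k$ by such a vector. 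Since $\norm{\matS} > 4r$, the replacement has norm $< \tfrac{7}{8}\norm{\matS}$, so the replaced column strictly shrank by a constant factor; a potential-function argument (for instance on $\sum_j \ceil{\log_{8/7}(\norm{\vecs_j}/(4r))}$, summed over columns longer than $4r$, using $\norm{\matS_0} \le 2^n r$) shows the loop halts after $\poly(n)$ oracle calls, leaving a linearly independent $\matS \subseteq \LL(\matB)$ with $\norm{\matS} \le 4r$. A final query $\IncGDD_\gamma(\matB,\matS,\vect,r)$ then returns $\vecs \in \LL(\matB)$ with
\[ \norm{\vecs - \vect}_2 \le r + \frac{\norm{\matS}}{8} \le r + \frac{r}{2} = \frac{3r}{2} \le 3\gamma(n)\lambda_n, \]
a valid $\GDD_{3\gamma}$ solution; returning the globally closest-to-$\vect$ vector over all guesses can only do at least as well, and the reduction runs in polynomial time with $\poly(n)$ oracle calls.

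We expect the only real work to be bookkeeping: choosing the stopping threshold ($4r$) together with the slack in the good guess ($r^\star \le 2\gamma(n)\lambda_n$) so that the decode step lands exactly at $3\gamma(n)\lambda_n$ rather than at a larger multiple, and verifying that ``keep whichever of $\vecs$ and $\vecs - \vecs_k$ is not in the span of the remaining columns'' really preserves full rank of $\matS$ at every iteration. The geometric-guessing device and the termination/potential argument are routine once these constants are chosen consistently.
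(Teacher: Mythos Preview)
The paper does not prove this lemma; it is quoted verbatim as Lemma~5.11 of \cite{DBLP:journals/siamcomp/MicciancioR07} and used as a black box. Your proposal is correct and is essentially the original Micciancio--Regev argument: iteratively shorten the longest vector of a linearly independent set by calling the $\IncGDD$ oracle with target $\vecs_k/2$ (keeping whichever of $\vecs$ and $\vecs-\vecs_k$ restores full rank), stop once $\norm{\matS}$ is within a constant of $r$, then make one final decoding call on $\vect$; the geometric search over $r$ handles the unknown $\lambda_n$. Your constants are chosen so that the final bound is exactly $3\gamma(n)\lambda_n$, matching the statement, and the rank-preservation and termination arguments are fine.
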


The best known algorithms for these lattice problems employ \emph{lattice reduction} techniques, based on LLL and BKZ \cite{lenstra1982factoring,DBLP:journals/tcs/Schnorr87}. They currently all have the following time-approximation trade-off: For a tunable parameter $k$, in dimension $n$, it is possible to solve lattice problems with approximation factor $\gamma = 2^{\widetilde{O}(n/k)}$ in time $2^{\widetilde{O}(k)}$, where $\widetilde{O}(\cdot)$ hides polylog factors in $n$. Equalizing these terms gives a $2^{\widetilde{O}(\sqrt{n})}$-time algorithm to solve $2^{\widetilde{O}(\sqrt{n})}$-approximate lattice problems. Despite extensive study in the lattice literature, no better algorithms are known (that improve the exponent by more than a polylog factor). In particular, the following two assumptions stand:

\begin{assumption}[Polynomial Hardness of Approximate Worst-case Lattice Problems]\label{polynomial-hardness-assumption}
For every polynomial $\gamma(n)$, at least one of $\SIVP_{\gamma}$, $\GapCRP_{\gamma}$, or $\GDD_{\gamma}$ requires super-polynomial time to solve.
\end{assumption}

\begin{assumption}[Subexponential Hardness of Approximate Worst-case Lattice Problems]\label{subexp-hardness-assumption}
For all constants $\varepsilon > 0$, at least one of $\SIVP_{\gamma}$, $\GapCRP_{\gamma}$, or $\GDD_{\gamma}$ requires time $2^{\omega(n^{1/2 - \varepsilon})}$ to solve, where $\gamma(n) = 2^{n^{1/2 - \varepsilon}}$.
\end{assumption}

\subsection{Continuous and Discrete Gaussian Measures}
For $\mu \in \R$ and $\sigma \in \R_{>0}$, we use the notation $\Norm(\mu, \sigma^2)$ to denote the (univariate) Normal distribution with mean $\mu$ and standard deviation $\sigma$. More generally, in $n \in \N$ variables, for $\vecmu \in \R^n$ and positive semi-definite $\matSigma \in \R^{n \times n}$, we use the notation $\Norm(\vecmu, \matSigma)$ to denote the multivariate Gaussian distribution with mean $\vecmu$ and covariance matrix $\matSigma$. For the special case where $\matSigma = \sigma^2 \matI_n$ for some $\sigma \in \R_{>0}$, let $\varphi_{\sigma, \vecmu}(\cdot)$ denote the probability density function of $\Norm(\vecmu, \sigma^2 \matI_n)$, i.e.,
\[ \varphi_{\sigma, \vecmu}(\vecx) = \frac{1}{(2\pi \sigma^2)^{n/2}} \cdot \exp \left( - \frac{ \norm{\vecx - \vecmu}^2_2}{2\sigma^2} \right).\]

We now recall standard Gaussian measure notions from the lattice literature (specifically, \cite{DBLP:journals/siamcomp/MicciancioR07}). For an input $\vecx \in \R^n$, we define the Gaussian function $\rho_{s, \vecmu}(\cdot)$ centered at $\vecmu \in \R^n$ with scale parameter $s \in \R_{>0}$ to be
\[ \rho_{s, \vecmu}(\vecx) = \exp \left( - \frac{ \pi \norm{\vecx - \vecmu}^2_2}{s^2} \right) = s^n \cdot \varphi_{s/\sqrt{2\pi}, \vecmu}(\vecx). \]
Therefore, $\rho_{s, \vecmu}(\vecx)/s^n$ is the density function for the probability distribution $\Norm(\vecmu, \frac{s^2}{2\pi} \matI_n)$.

For any discrete $S \subseteq \R^n$, let $\rho_{s, \vecmu}(S)$ denote the sum
\[ \sum_{\vecv \in S} \rho_{s, \vecmu}(\vecv) \in \R. \]
Let $\Lambda$ be a full-rank $n$-dimensional lattice. We define the \emph{discrete Gaussian distribution} $D_{\Lambda + \vecmu, s}$ shifted by $\vecmu \in \R^n$ and scale parameter $s \in \R_{>0}$ to be the distribution with probability mass function
\begin{align*} D_{\Lambda + \vecmu, s}(\vecx) &= \begin{cases}
    \dfrac{\rho_{s, \zero}(\vecx)}{\rho_{s, \zero}(\Lambda + \vecmu)} = \dfrac{\rho_{s, \zero}(\vecx)}{\rho_{s, -\vecmu}(\Lambda)} & \text{if $\vecx \in \Lambda$},
    \\0 & \text{otherwise},
\end{cases}
\\&= \one[\vecx \in \Lambda] \cdot \frac{\exp \left( - \pi \norm{\vecx - \vecmu}^2_2 /s^2 \right)}{\sum_{\vecv \in \Lambda} \exp \left( - \pi \norm{\vecx - \vecmu}^2_2 /s^2 \right)}.
\end{align*}
We will use the fact that there is an efficient sampler for the discrete Gaussian for the special case where $\Lambda = \Z^n$ (see, e.g., \cite[Lemma 2.3]{DBLP:conf/stoc/BrakerskiLPRS13}).

For an $n$-dimensional lattice $\Lambda$ and $\epsilon > 0$, we define the \emph{smoothing parameter} $\eta_{\epsilon}(\Lambda)$ to be the smallest $s \in \R_{>0}$ such that $\rho_{1/s, \zero}(\Lambda^* \setminus \{\zero\}) \leq \epsilon$. We now recall standard results about the smoothing parameter.

\begin{lemma}[Lemma 3.3 of \cite{DBLP:journals/siamcomp/MicciancioR07}]\label{smoothing-bound-lambda-n}
    For any $n$-dimensional lattice $\Lambda$ and $\epsilon > 0$,
    \[ \eta_{\epsilon}(\Lambda) \leq \sqrt{\frac{\ln(2n(1 + 1/\epsilon))}{\pi}} \cdot \lambda_n(\Lambda). \]
\end{lemma}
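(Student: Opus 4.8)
The plan is to reduce this basis-independent bound to two ingredients: a smoothing estimate for the integer lattice $\Z^n$, and a principle that transfers such an estimate to an arbitrary lattice through a basis whose Gram--Schmidt vectors are short. I would begin with the easy observation that $\eta_\epsilon$ is monotone under passing to full-rank sublattices: if $\Lambda' \subseteq \Lambda$ is full rank then $\Lambda^* \subseteq (\Lambda')^*$ (duality reverses inclusions), so $\rho_{1/s,\zero}(\Lambda^* \setminus \{\zero\}) \le \rho_{1/s,\zero}((\Lambda')^* \setminus \{\zero\})$ for every $s$, whence $\eta_\epsilon(\Lambda) \le \eta_\epsilon(\Lambda')$. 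By the definition of $\lambda_n$, the lattice $\Lambda$ contains linearly independent $\vecv_1, \dots, \vecv_n$ with $\norm{\vecv_i}_2 \le \lambda_n(\Lambda)$; taking $\Lambda'$ to be the sublattice they generate, it then suffices to bound $\eta_\epsilon(\Lambda')$ by $\big(\max_i \norm{\vecv_i}_2\big)\cdot\sqrt{\ln(2n(1+1/\epsilon))/\pi}$, which is at most $\lambda_n(\Lambda)\sqrt{\ln(2n(1+1/\epsilon))/\pi}$.

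For the core step I would show that, for \emph{any} lattice with basis $\vecv_1, \dots, \vecv_n$ and Gram--Schmidt orthogonalization $\widetilde{\vecv}_1, \dots, \widetilde{\vecv}_n$, one has $\eta_\epsilon(\Lambda') \le \big(\max_i \norm{\widetilde{\vecv}_i}_2\big) \cdot \eta_\epsilon(\Z^n)$, which is enough since $\norm{\widetilde{\vecv}_i}_2 \le \norm{\vecv_i}_2$. After an orthogonal change of coordinates (which preserves both $\rho_{s,\zero}$ and duals) we may assume the basis matrix is upper triangular with $i$-th diagonal entry $\norm{\widetilde{\vecv}_i}_2$, so $(\Lambda')^*$ is generated by a lower-triangular matrix $\matM$ with $\matM_{ii} = 1/\norm{\widetilde{\vecv}_i}_2$. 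Then $\rho_{1/s,\zero}((\Lambda')^*) = \sum_{\vecz \in \Z^n} \prod_{\ell=1}^n \exp(-\pi s^2 (\matM\vecz)_\ell^2)$, and since $(\matM\vecz)_\ell = z_\ell/\norm{\widetilde{\vecv}_\ell}_2 + c_\ell$ with $c_\ell$ depending only on $z_1,\dots,z_{\ell-1}$, I would sum over $z_n, z_{n-1}, \dots, z_1$ in turn, using at each stage that a shifted one-dimensional Gaussian sum $\sum_{k \in \Z} \exp(-\pi s^2 (k/r + c)^2)$ is maximized at $c = 0$ (a one-line consequence of Poisson summation, its Fourier coefficients being nonnegative). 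This yields $\rho_{1/s,\zero}((\Lambda')^*) \le \prod_\ell \rho_{\norm{\widetilde{\vecv}_\ell}_2/s,\zero}(\Z) \le \rho_{(\max_i \norm{\widetilde{\vecv}_i}_2)/s,\zero}(\Z)^n$, and comparing with the $\Z^n$ case gives the claimed inequality. (Alternatively, this core step can be extracted from Banaszczyk's Gaussian-measure machinery for lattices.)

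It remains to bound $\eta_\epsilon(\Z^n)$: here $(\Z^n)^* = \Z^n$ and $\rho_{1/s,\zero}(\Z^n) = \rho_{1/s,\zero}(\Z)^n$, so one must check $\rho_{1/s,\zero}(\Z)^n \le 1+\epsilon$ whenever $s \ge \sqrt{\ln(2n(1+1/\epsilon))/\pi}$. I would do this by bounding $\rho_{1/s,\zero}(\Z)-1 = 2\sum_{k\ge1}e^{-\pi s^2 k^2}$ by a geometric tail and then applying $(1+x)^n \le e^{nx}$ together with the elementary bound $\ln(1+\epsilon) > \epsilon/(1+\epsilon)$, whose (quantified) slack absorbs the lower-order terms — the constant $2n(1+1/\epsilon)$ is calibrated precisely for this to close. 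Assembling the three pieces gives $\eta_\epsilon(\Lambda) \le \eta_\epsilon(\Lambda') \le \lambda_n(\Lambda)\sqrt{\ln(2n(1+1/\epsilon))/\pi}$. I expect the main obstacle to be the core step — reducing the Gaussian mass of the dual lattice to a product of one-dimensional sums — since this is where the triangular structure of the basis and the Poisson-summation inequality have to be combined correctly; by contrast the sublattice reduction is immediate and the $\Z^n$ estimate, though fussy about constants, is routine.
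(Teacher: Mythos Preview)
The paper does not prove this lemma; it is stated with a citation to Micciancio--Regev and used as a black box. Your proposal is therefore not being compared against an in-paper argument but against the original source. That said, your outline is correct and is essentially the standard route: the sublattice monotonicity of $\eta_\epsilon$ is immediate from duality, the Gram--Schmidt reduction to $\Z^n$ via the triangular structure of the dual basis and the Poisson-summation bound on shifted one-dimensional theta sums is exactly the argument underlying the refinement $\eta_\epsilon(\Lambda) \le \max_i \norm{\widetilde{\vecb}_i}_2 \cdot \sqrt{\ln(2n(1+1/\epsilon))/\pi}$ (as in Gentry--Peikert--Vaikuntanathan), and choosing the $\vecv_i$ to realize $\lambda_n$ then yields the stated bound. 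Your identification of the core step and its justification are both right.

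One small caution on the final piece: the naive chain ``bound $\rho_{1/s,\zero}(\Z)-1$ by a geometric tail, then use $(1+x)^n \le e^{nx}$ and $\ln(1+\epsilon) \ge \epsilon/(1+\epsilon)$'' does not close on the nose --- if you bound $\sum_{k\ge 1} e^{-\pi s^2 k^2} \le e^{-\pi s^2}/(1-e^{-\pi s^2})$ and push through, you end up needing $0 \le -\epsilon$. You need a slightly sharper tail (e.g., separate $k=1$ from $k\ge 2$, or use $k^2 \ge 2k-1$) together with the strict inequality $\ln(1+\epsilon) > \epsilon/(1+\epsilon)$ quantified as $\ln(1+\epsilon) - \epsilon/(1+\epsilon) = \int_0^\epsilon t/(1+t)^2\,dt$; the two slacks then match. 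You already flag this step as ``fussy about constants,'' which is accurate --- just be aware that the most naive bounds do not suffice and a second-order term is genuinely needed.
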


\begin{lemma}[Lemma 4.1 of \cite{DBLP:journals/siamcomp/MicciancioR07}]\label{smoothing-mod-P}
    For any lattice $\LL(\matB)$ and any $\epsilon > 0$, $s \geq \eta_{\epsilon}(\LL(\matB))$, and $\vecmu \in \R^n$, we have the statistical distance bound
    \[ \Delta\left(\Norm\left(\vecmu, \frac{s^2}{2\pi} \matI_n \right) \mod \PP(\matB), U(\PP(\matB)) \right) \leq \frac{\epsilon}{2}. \]
\end{lemma}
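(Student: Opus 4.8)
The plan is to prove this by the standard Fourier-analytic ``folding'' argument for Gaussians on lattices. First I would write down the density of $\Norm(\vecmu, \frac{s^2}{2\pi}\matI_n) \bmod \PP(\matB)$: since $\rho_{s,\vecmu}(\cdot)/s^n$ is the density of $\Norm(\vecmu, \frac{s^2}{2\pi}\matI_n)$ and reduction modulo $\PP(\matB)$ sends each coset of $\LL(\matB)$ to its unique representative in $\PP(\matB)$, the image distribution has density
\[ f(\vecx) = \frac{1}{s^n}\sum_{\vecv \in \LL(\matB)} \rho_{s,\vecmu}(\vecx + \vecv) = \frac{1}{s^n}\,\rho_{s,\zero}\bigl(\LL(\matB) + (\vecx - \vecmu)\bigr), \qquad \vecx \in \PP(\matB), \]
while $U(\PP(\matB))$ has the constant density $1/|\det\matB|$ because $\mathrm{vol}(\PP(\matB)) = |\det\matB|$.

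Next I would apply the Poisson summation formula to the Schwartz function $\rho_{s,\zero}$ over the lattice $\LL(\matB)$ with dual $\LL(\matB)^*$. Using that (with the convention $\widehat{g}(\vecw) = \int g(\vecx)\, e^{-2\pi i \langle \vecx, \vecw\rangle}\, d\vecx$) the Fourier transform satisfies $\widehat{\rho_{s,\zero}} = s^n \rho_{1/s,\zero}$, this yields
\[ f(\vecx) = \frac{1}{|\det\matB|}\sum_{\vecw \in \LL(\matB)^*} \rho_{1/s,\zero}(\vecw)\, e^{2\pi i \langle \vecw,\, \vecx - \vecmu\rangle}. \]
The $\vecw = \zero$ term is exactly $1/|\det\matB|$, the uniform density; subtracting it and bounding every exponential by $1$ in modulus gives the pointwise estimate
\[ \Bigl| f(\vecx) - \frac{1}{|\det\matB|} \Bigr| \;\leq\; \frac{1}{|\det\matB|}\sum_{\vecw \in \LL(\matB)^* \setminus \{\zero\}} \rho_{1/s,\zero}(\vecw) \;=\; \frac{\rho_{1/s,\zero}(\LL(\matB)^* \setminus \{\zero\})}{|\det\matB|}, \]
uniformly over $\vecx \in \PP(\matB)$.

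Finally I would invoke the hypothesis $s \geq \eta_\epsilon(\LL(\matB))$. Since $\rho_{1/s,\zero}(\vecw) = \exp(-\pi s^2 \norm{\vecw}_2^2)$ is nonincreasing in $s$ for each fixed $\vecw \neq \zero$, the very definition of the smoothing parameter forces $\rho_{1/s,\zero}(\LL(\matB)^* \setminus \{\zero\}) \leq \epsilon$. Integrating the pointwise bound over $\PP(\matB)$ then gives
\[ \Delta\Bigl(\Norm\bigl(\vecmu, \tfrac{s^2}{2\pi}\matI_n\bigr) \bmod \PP(\matB),\, U(\PP(\matB))\Bigr) = \frac12\int_{\PP(\matB)}\Bigl| f(\vecx) - \frac{1}{|\det\matB|}\Bigr| d\vecx \;\leq\; \frac12 \cdot \frac{\epsilon}{|\det\matB|}\cdot \mathrm{vol}(\PP(\matB)) = \frac{\epsilon}{2}. \]
The only step requiring genuine care is the application of Poisson summation: one must check absolute convergence of the lattice sum and the validity of the identity pointwise, which follows from the rapid (Gaussian) decay of both $\rho_{s,\zero}$ and its Fourier transform, together with consistent tracking of the $2\pi$ normalization between the $\rho$-notation and the probabilistic notation. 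Everything else — the folding computation, the triangle inequality, and the volume bookkeeping — is routine.
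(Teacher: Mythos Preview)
Your argument is correct and is exactly the standard Poisson-summation proof from Micciancio--Regev; the paper itself does not reprove this lemma but merely cites it as \cite[Lemma~4.1]{DBLP:journals/siamcomp/MicciancioR07}, so there is nothing further to compare.
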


\begin{corollary}\label{above-lambda-n-is-uniform}
    For any full-rank $\matB \in \R^{n \times n}$, $\epsilon > 0$, and $\vecmu \in \R^n$, if
    \[ \sigma \geq \sqrt{\frac{\ln(2n(1 + 1/\epsilon))}{2\pi^2}} \cdot \lambda_n(\LL(\matB)), \] then we have the statistical distance bound
    \[ \Delta\left(\Norm\left(\vecmu, \sigma^2 \matI_n \right) \mod \PP(\matB), U(\PP(\matB)) \right) \leq \frac{\epsilon}{2}. \]
\end{corollary}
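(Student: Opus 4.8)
The plan is to simply chain together the two preceding lemmas, the only real bookkeeping being the translation between the Gaussian-function normalization $\rho_{s,\vecmu}$ used to define the smoothing parameter and the probability-density normalization used for $\Norm(\vecmu,\sigma^2\matI_n)$. Recall from the definitions that $\rho_{s,\vecmu}(\vecx)/s^n$ is exactly the density of $\Norm\!\left(\vecmu,\frac{s^2}{2\pi}\matI_n\right)$; hence the distribution $\Norm(\vecmu,\sigma^2\matI_n)$ coincides with the one governed by $\rho_{s,\vecmu}$ for the choice $s=\sigma\sqrt{2\pi}$.

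First I would invoke \Cref{smoothing-mod-P} with this value $s=\sigma\sqrt{2\pi}$: it yields the desired statistical distance bound $\Delta\!\left(\Norm(\vecmu,\sigma^2\matI_n)\bmod\PP(\matB),\,U(\PP(\matB))\right)\le\epsilon/2$ provided we can verify the hypothesis $s\ge\eta_{\epsilon}(\LL(\matB))$, i.e. $\sigma\sqrt{2\pi}\ge\eta_{\epsilon}(\LL(\matB))$, equivalently $\sigma\ge\eta_{\epsilon}(\LL(\matB))/\sqrt{2\pi}$.

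Second, I would bound the smoothing parameter using \Cref{smoothing-bound-lambda-n}, which gives $\eta_{\epsilon}(\LL(\matB))\le\sqrt{\ln(2n(1+1/\epsilon))/\pi}\cdot\lambda_n(\LL(\matB))$. Dividing by $\sqrt{2\pi}$ and combining the two radicals shows
\[
\frac{\eta_{\epsilon}(\LL(\matB))}{\sqrt{2\pi}}\;\le\;\sqrt{\frac{\ln(2n(1+1/\epsilon))}{2\pi^2}}\cdot\lambda_n(\LL(\matB)).
\]
So the hypothesis of the corollary, $\sigma\ge\sqrt{\ln(2n(1+1/\epsilon))/(2\pi^2)}\cdot\lambda_n(\LL(\matB))$, implies $\sigma\ge\eta_{\epsilon}(\LL(\matB))/\sqrt{2\pi}$, which is precisely what was needed in the previous paragraph to apply \Cref{smoothing-mod-P}. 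This closes the argument.

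I do not expect any genuine obstacle here; the statement is a packaging of \Cref{smoothing-bound-lambda-n} and \Cref{smoothing-mod-P} into a form stated directly in terms of the standard-deviation parameter $\sigma$ and $\lambda_n$, which is the form used downstream (e.g. in setting $\matA=\matB^{-1}\matU\bmod\Z^{n\times m}$). The only place to be careful is the constant: tracking the factor $\sqrt{2\pi}$ correctly so that $\ln(2n(1+1/\epsilon))/\pi$ divided by $2\pi$ becomes $\ln(2n(1+1/\epsilon))/(2\pi^2)$ under the square root.
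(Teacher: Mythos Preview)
Your proposal is correct and is exactly the paper's approach: the paper's proof is the single sentence ``This follows by combining \Cref{smoothing-bound-lambda-n,smoothing-mod-P}, where we set $\sigma = s/\sqrt{2\pi}$,'' and you have simply unpacked that, including the bookkeeping of the $\sqrt{2\pi}$ factor that turns $\ln(2n(1+1/\epsilon))/\pi$ into $\ln(2n(1+1/\epsilon))/(2\pi^2)$.
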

\begin{proof}
    This follows by combining \Cref{smoothing-bound-lambda-n,smoothing-mod-P}, where we set $\sigma = s / \sqrt{2\pi}$.
\end{proof}

\begin{lemma}\label{uniform-shift-discrete-gaussian-is-gaussian}
    For $\sigma > 0$, let $\mathcal{D}_{\sigma}$ denote the distribution of outputs when first sampling $\vecv \sim U([0,1)^n)$ and then outputting a sample from $D_{\Z^n + \vecv, \sigma \sqrt{2\pi}}$. For any $\epsilon \in (0, 1/2)$, if
    \[ \sigma \geq \sqrt{\frac{\ln(2n(1 + 1/\epsilon))}{2 \pi^2}}, \]
    then we have the statistical distance bound
    \[ \Delta\left(\Norm\left(\zero, \sigma^2 \matI_n \right), \mathcal{D}_{\sigma} \right) \leq 4 \epsilon.\]
\end{lemma}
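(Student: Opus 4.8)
The plan is to exhibit both $\Norm(\zero,\sigma^2\matI_n)$ and $\mathcal{D}_\sigma$ as mixtures of the \emph{same} family of discrete Gaussians $\{D_{\Z^n+\vecv,\,\sigma\sqrt{2\pi}}\}_{\vecv\in[0,1)^n}$, with the only difference being the mixing distribution on the coset representative $\vecv$, and then to bound the statistical distance between the mixtures by the statistical distance between the two mixing distributions.

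First I would set $s=\sigma\sqrt{2\pi}$, so that the density of $\Norm(\zero,\sigma^2\matI_n)$ at $\vecx$ equals $\rho_{s,\zero}(\vecx)/s^n$, and establish the exact identity
\[ \Norm(\zero,\sigma^2\matI_n) \;=\; \int_{[0,1)^n} D_{\Z^n+\vecv,\,s}\;d\mu(\vecv), \qquad \mu := \Norm(\zero,\sigma^2\matI_n)\bmod\PP(\matI_n). \]
In words: drawing $\vecy\sim\Norm(\zero,\sigma^2\matI_n)$ is the same as drawing its coset $\vecv=\vecy\bmod\Z^n$ from $\mu$ and then re-drawing $\vecy$ from the discrete Gaussian supported on $\Z^n+\vecv$. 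Rather than appeal to regular conditional probabilities (the coset is a null event), I would verify this by direct computation: for measurable $A\subseteq\R^n$, the density of $\mu$ on $[0,1)^n$ is $\tfrac1{s^n}\rho_{s,\zero}(\Z^n+\vecv)$, which cancels the normalizer in $D_{\Z^n+\vecv,s}(A)$, leaving $\int_{[0,1)^n}\sum_{\vecw\in\Z^n}\one[\vecv+\vecw\in A]\,\varphi_{\sigma,\zero}(\vecv+\vecw)\,d\vecv=\int_{\R^n}\one[\vecx\in A]\,\varphi_{\sigma,\zero}(\vecx)\,d\vecx$, using that $(\vecv,\vecw)\mapsto\vecv+\vecw$ is a bijection $[0,1)^n\times\Z^n\to\R^n$. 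By its very definition, $\mathcal{D}_\sigma$ is the analogous mixture with $U([0,1)^n)=U(\PP(\matI_n))$ in place of $\mu$.

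Next I would invoke that total variation distance cannot increase under a fixed (randomized) post-processing map: since $\Norm(\zero,\sigma^2\matI_n)$ and $\mathcal{D}_\sigma$ are the images of $\mu$ and $U(\PP(\matI_n))$ under the common map $\vecv\mapsto D_{\Z^n+\vecv,s}$, we get
\[ \Delta\bigl(\Norm(\zero,\sigma^2\matI_n),\,\mathcal{D}_\sigma\bigr)\;\le\;\Delta\bigl(\mu,\,U(\PP(\matI_n))\bigr). \]
(Equivalently: couple the two coset representatives optimally and run identical discrete-Gaussian samplers whenever they coincide.) To finish, apply \Cref{above-lambda-n-is-uniform} with $\matB=\matI_n$ and $\vecmu=\zero$; since $\lambda_n(\Z^n)=1$, the hypothesis $\sigma\ge\sqrt{\ln(2n(1+1/\epsilon))/(2\pi^2)}$ is precisely what that corollary requires, giving $\Delta(\mu,U(\PP(\matI_n)))\le\epsilon/2$, and hence the stated bound (the argument in fact delivers the stronger $\epsilon/2\le 4\epsilon$).

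I do not expect a real obstacle here: this is a standard smoothing warm-up. The only spot that needs care is the mixture identity in the second step — because $\vecy\bmod\Z^n$ conditions on a measure-zero set, I would avoid a disintegration argument and instead verify the displayed integral identity explicitly as above, after which the data-processing bound and \Cref{above-lambda-n-is-uniform} are immediate.
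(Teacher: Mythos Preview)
Your proposal is correct but follows a genuinely different route from the paper's proof. The paper argues via a pointwise density ratio: it cites \cite[Lemma 17]{DBLP:conf/focs/GupteVV22} to bound $\Delta(\Norm(\zero,\sigma^2\matI_n),\mathcal{D}_\sigma)$ by $\sup_{\vecv}\rho_{s,\zero}(\Z^n)/\rho_{s,-\vecv}(\Z^n)-1$, and then invokes the standard smoothing estimate implicit in \cite[Lemma 4.4]{DBLP:journals/siamcomp/MicciancioR07} to show this ratio lies in $[1,(1+\epsilon)/(1-\epsilon)]\subseteq[1,1+4\epsilon]$ once $s\ge\eta_\epsilon(\Z^n)$; the bound $4\epsilon$ in the statement comes from this $(1+\epsilon)/(1-\epsilon)$ step and is why $\epsilon<1/2$ is assumed. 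Your approach instead recognises both distributions as images of two measures on $[0,1)^n$ (namely $\mu=\Norm(\zero,\sigma^2\matI_n)\bmod\Z^n$ and the uniform measure) under the common kernel $\vecv\mapsto D_{\Z^n+\vecv,s}$, applies the data-processing inequality, and then plugs in \Cref{above-lambda-n-is-uniform} with $\matB=\matI_n$. This is cleaner, avoids the external citation, and in fact delivers the sharper constant $\epsilon/2$; the paper's route stays closer to the explicit density computations customary in the lattice literature and makes the appearance of the $4\epsilon$ constant transparent.
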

\begin{proof}[Proof sketch of \Cref{uniform-shift-discrete-gaussian-is-gaussian}]
    Let $s = \sigma \sqrt{2\pi}$. The analysis in \cite[Lemma 17]{DBLP:conf/focs/GupteVV22} shows that the distributions have statistical distance at most
    \[ \sup_{\vecv \in [0,1)^n} \frac{\rho_{s, \zero}(\Z^n)}{\rho_{s, \zero}(\Z^n + \vecv)} - 1 = \sup_{\vecv \in [0,1)^n} \frac{\rho_{s, \zero}(\Z^n)}{\rho_{s, -\vecv}(\Z^n)} - 1. \]
    Implicit in \cite[Lemma 4.4]{DBLP:journals/siamcomp/MicciancioR07} is that as long as $s \geq \eta_{\epsilon}(\Z^n)$, then
    \[ \frac{\rho_{s, \zero}(\Z^n)}{\rho_{s, -\vecv}(\Z^n)} \in \left[1, \frac{1 + \epsilon}{1 - \epsilon} \right] \subseteq [1, 1 + 4 \epsilon], \]
    where the last inclusion comes from the bound $\epsilon < 1/2$. Subtracting by $1$ and invoking \Cref{smoothing-bound-lambda-n} with $\Lambda = \Z^n$ gives the desired bound.
\end{proof}

We now recall some standard spectral bounds for Gaussian matrices.

\begin{lemma}[As in \cite{rudelson2010non}]\label{gaussian-singular-value}
Let $\matA \in \R^{n \times m}$ be such that $A_{i,j} \sim_{\iid} \Norm(0,1)$. For all $t > 0$, we have
\[ \Pr\left[\sigma_{\max}(\matA) \leq \sqrt{n} + \sqrt{m} + t \right] \geq 1 - 2 e^{-t^2/2}.\]
In particular, for $m \geq 16n$, by setting $t = \sqrt{n}$, we have
\[\Pr\left[\sigma_{\max}(\matA) \leq \frac{3\sqrt{m}}{2} \right] \geq 1 - 2 e^{-n/2}.\]
\end{lemma}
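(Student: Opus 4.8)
The plan is to combine two classical facts about Gaussian matrices: a dimension-free concentration inequality and a sharp bound on the expected operator norm. For the concentration piece, I would first note that the map $\matA \mapsto \sigma_{\max}(\matA)$ is $1$-Lipschitz on $\R^{n \times m}$ with respect to the Frobenius norm: since $\sigma_{\max}(\cdot)$ is itself a norm (the $\ell_2 \to \ell_2$ operator norm), the triangle inequality gives $|\sigma_{\max}(\matA) - \sigma_{\max}(\matB)| \le \sigma_{\max}(\matA - \matB)$, and $\sigma_{\max}(\matM)^2 \le \sum_i \sigma_i(\matM)^2 = \norm{\matM}_{\mathrm{F}}^2$ (Frobenius norm). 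The $nm$ entries of $\matA$ form a standard Gaussian vector, so the Gaussian concentration inequality for $1$-Lipschitz functions yields $\Pr[\sigma_{\max}(\matA) \geq \EE[\sigma_{\max}(\matA)] + t] \leq e^{-t^2/2}$ for every $t > 0$, which is in particular at most $2 e^{-t^2/2}$, matching the slack in the statement.

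The second ingredient — and the only genuinely nontrivial one — is the bound $\EE[\sigma_{\max}(\matA)] \leq \sqrt{n} + \sqrt{m}$. This is exactly where the clean constant in front of $\sqrt{n} + \sqrt{m}$ comes from, and it is the main obstacle: a crude $\varepsilon$-net and union-bound argument would only give $\EE[\sigma_{\max}(\matA)] \lesssim C(\sqrt{n} + \sqrt{m})$ for some absolute $C > 1$, which is not sharp enough to later extract the constant $3/2$. The standard route is Gordon's Gaussian comparison (min--max) inequality applied to the two Gaussian processes $X_{\vecu, \vecv} = \vecu^\top \matA \vecv$ and $Y_{\vecu, \vecv} = \vecu^\top \bo{g} + \vecv^\top \bo{h}$ indexed by $(\vecu, \vecv) \in S^{n-1} \times S^{m-1}$, with $\bo{g}, \bo{h}$ independent standard Gaussian vectors; since $\max_{\vecu, \vecv} X_{\vecu, \vecv} = \sigma_{\max}(\matA)$ while $\EE \max_{\vecu, \vecv} Y_{\vecu, \vecv} = \EE \norm{\bo{g}}_2 + \EE \norm{\bo{h}}_2 \leq \sqrt{n} + \sqrt{m}$, Gordon's inequality gives the claim. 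As this is standard and matches the cited reference, I would invoke it as a black box rather than reprove Gordon's inequality.

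Combining the two, $\Pr[\sigma_{\max}(\matA) > \sqrt{n} + \sqrt{m} + t] \leq \Pr[\sigma_{\max}(\matA) > \EE[\sigma_{\max}(\matA)] + t] \leq 2 e^{-t^2/2}$, which is the first claim. For the ``in particular'' clause, set $t = \sqrt{n}$: the failure probability becomes $2 e^{-n/2}$ and the threshold becomes $2\sqrt{n} + \sqrt{m}$; since $m \geq 16 n$ forces $\sqrt{n} \leq \sqrt{m}/4$, we get $2\sqrt{n} + \sqrt{m} \leq \tfrac{1}{2}\sqrt{m} + \sqrt{m} = \tfrac{3}{2}\sqrt{m}$, as desired. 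Everything except the expectation bound is routine; that is the only step where I would expect to lean on external machinery.
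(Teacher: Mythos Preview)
Your proposal is correct and is precisely the standard argument from the cited survey of Rudelson and Vershynin: Gaussian Lipschitz concentration for the $1$-Lipschitz map $\matA \mapsto \sigma_{\max}(\matA)$, combined with the sharp bound $\EE[\sigma_{\max}(\matA)] \le \sqrt{n}+\sqrt{m}$ obtained via Gordon's (Slepian--Fernique type) comparison inequality. The paper itself does not supply a proof of this lemma at all---it simply quotes the result from \cite{rudelson2010non}---so there is nothing to compare against beyond noting that your sketch faithfully reproduces the argument behind the citation, including the arithmetic for the ``in particular'' clause.
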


We now recall standard tail bounds for the $\chi^2(n)$ distribution, corresponding to $\ell_2$-norm bounds on Gaussian vectors.
\begin{lemma}[As in \cite{laurent2000adaptive}, Corollary of Lemma~1]\label{chi-squared-tail-bound}
For any $t \geq 0$, we have
\[ \Pr_{\vecv \sim \Norm(\zero, \matI_n)}\left[\norm{\vecv}_2^2 \geq n + 2 \sqrt{tn} + 2t \right] \leq e^{-t}. \]
In particular, setting $t = n/4$, we get
\[ \Pr_{\vecv \sim \Norm(\zero, \matI_n)}\left[\norm{\vecv}_2 \geq \sqrt{5/2} \cdot \sqrt{n} \right] \leq e^{-n/4}.\]
\end{lemma}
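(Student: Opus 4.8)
The plan is to prove this as a Chernoff (exponential Markov) bound on the $\chi^2(n)$ distribution, since $\norm{\vecv}_2^2 = \sum_{i=1}^n v_i^2$ with $v_1,\dots,v_n$ i.i.d.\ standard normal. First I would record the moment generating function: for $0 \le s < 1/2$ one has $\EE[e^{s v_i^2}] = (1-2s)^{-1/2}$, so by independence $\EE\bigl[\exp\bigl(s(\norm{\vecv}_2^2 - n)\bigr)\bigr] = e^{-sn}(1-2s)^{-n/2}$, which is finite precisely in this range. Applying Markov's inequality to the nonnegative random variable $\exp\bigl(s(\norm{\vecv}_2^2 - n)\bigr)$ then gives, for every $s \in [0,1/2)$ and every $u \ge 0$,
\[ \Pr\bigl[\norm{\vecv}_2^2 \ge n + u\bigr] \;\le\; \exp\Bigl(-su - sn - \tfrac{n}{2}\ln(1-2s)\Bigr). \]

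The second step is the single analytic inequality underlying the clean Laurent--Massart form: for $0 \le s < 1/2$, $-\tfrac12\ln(1-2s) - s \le \tfrac{s^2}{1-2s}$. This follows by comparing Taylor series: the coefficient of $s^k$ for $k \ge 2$ is $2^{k-1}/k$ on the left and $2^{k-2}$ on the right, all coefficients are nonnegative, and $2^{k-1}/k \le 2^{k-2}$ exactly when $k \ge 2$, so the term-by-term comparison is valid on $[0,1/2)$. Plugging this in, the exponent above is at most $\tfrac{ns^2}{1-2s} - su$. Now set $u = 2\sqrt{tn} + 2t$ and make the judicious choice $s = \sqrt{t}/(\sqrt{n} + 2\sqrt{t}) \in [0,1/2)$, so that $1-2s = \sqrt{n}/(\sqrt{n}+2\sqrt{t})$ and $n + 2u = (\sqrt{n}+2\sqrt{t})^2$; a short simplification then shows $\tfrac{ns^2}{1-2s} - su = -t$ exactly. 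This establishes $\Pr[\norm{\vecv}_2^2 \ge n + 2\sqrt{tn} + 2t] \le e^{-t}$, the first displayed bound.

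For the ``in particular'' claim, substitute $t = n/4$ into the threshold: $n + 2\sqrt{(n/4)\cdot n} + 2\cdot(n/4) = n + n + n/2 = \tfrac{5n}{2}$, and $\norm{\vecv}_2^2 \ge \tfrac{5}{2}n$ is equivalent to $\norm{\vecv}_2 \ge \sqrt{5/2}\cdot\sqrt{n}$, so that event has probability at most $e^{-n/4}$.

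I do not anticipate a genuine obstacle, as this is a classical concentration inequality (one could alternatively just invoke \cite{laurent2000adaptive}). The only mildly delicate point is choosing the optimizing parameter $s$ so that the tail exponent collapses to exactly $2\sqrt{tn} + 2t$ rather than a messier, qualitatively identical sub-exponential expression of the form $\exp(-c\min(u^2/n, u))$; everything else is routine.
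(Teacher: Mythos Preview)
Your proof is correct and complete; it is in fact the standard Laurent--Massart argument. The paper does not give its own proof of this lemma at all: it simply states the bound and cites \cite{laurent2000adaptive}, so there is nothing to compare against beyond noting that you have supplied the self-contained derivation the paper omits.
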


\subsection{Symmetric Perceptrons and Number Partitioning}

\begin{definition}[Symmetric Binary Perceptron Problem]
    For a parameter $\sbpError : \R_{\geq 1} \to [0,1]$, the \emph{symmetric binary perceptron ($\SBP_{\sbpError}$)} problem is an average-case search problem defined as follows. Given a random Gaussian matrix $\matA \sim \Norm(0,1)^{n \times m}$ as input where $m \geq n$, output a vector $\vecx \in \{-1,1\}^m$ such that $\norm{\matA \vecx}_{\infty} \leq \sbpError(m/n) \cdot \sqrt{m}$.
\end{definition}

\begin{definition}[Number Partitioning Problem]
    For a parameter $\nppError : \N \to [0,1]$, the \emph{number partitioning problem ($\NPP_{\nppError}$)} is an average-case search problem defined as follows. Given a random Gaussian vector $\veca \sim \Norm(\zero,\matI_m)$, output a vector $\vecx \in \{-1,1\}^m$ such that $|\veca^\top \vecx| \leq \nppError(m) \cdot \sqrt{m}$.
\end{definition}
We emphasize that $\NPP_{\nppError}$ is exactly a special case of $\SBP_{\sbpError}$ (when setting $n = 1$).

\subsection{Worst-case to Average-case Reductions}

We recall basic notions of reductions between (search) worst-case and average-case computational problems. Let $A \in \mathsf{FNP}$ be a worst-case search problem, and let $B \in \mathsf{FNP}$ be an average-case search problem defined over some distribution family $\{\mathcal{D}_n\}_{n \in \N}$. We say that there is a $T(n)$-time reduction from $A$ to $B$ if for all non-negligible functions $\mu$, there exists a randomized $\poly(T)$-time oracle Turing machine $M^{(\cdot)}$ such that for all (possibly randomized) $\oracle$ such that
\[ \Pr_{y \gets \mathcal{D}_n} [(y, \oracle(y)) \in B] \geq \mu(n), \]
 for all $n \in \N$, it holds that for all $x \in \{0,1\}^*$,
\[ \Pr_{r}\left[ \left(x, M^{\oracle}(x; r) \right) \in A \right] \geq \frac{2}{3}, \]
where $r$ denotes the internal randomness of $M$. We emphasize that the polynomial in the $\poly(T)$ run-time of $M^{(\cdot)}$ may depend on the non-negligible function $\mu$. Since $x$ is worst-case and $A \in \mathsf{FNP}$, standard amplification applies to make the success probability of $M$ exponentially close to $1$.

\section{Reduction to Symmetric Binary Perceptrons}
\label{sec:sbpmain}

\begin{theorem}\label{main-technical-reduction-to-sbp}
Suppose $\sbpError(x) = 1/x^{1/2 + \varepsilon}$ for some constant $\varepsilon > 0$. There exists some polynomial $\gamma(n) = n^{O(1/\varepsilon)}$ such that there is a polynomial-time reduction from $\IncGDD_{\gamma}$ to $\SBP_{\sbpError}$.
\end{theorem}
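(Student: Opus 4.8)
The plan is to follow the smoothing-based worst-case to average-case paradigm sketched in the technical overview, instantiated precisely against the $\IncGDD_\gamma$ interface of \Cref{def:incgdd}. Given an $\IncGDD_\gamma$ instance $(\matB, \matS, \vect, r)$ with $r > \gamma(n) \lambda_n(\LL(\matB))$, I first choose parameters: a dimension $m = n^{\Theta(1/\varepsilon)}$ (large enough that $m^\varepsilon$ dominates $n^{1/2+\varepsilon}$ times the relevant polylog and spectral factors), and a Gaussian width $\sigma$ set to a basis-independent multiple of $\lambda_n(\LL(\matB))$ via \Cref{above-lambda-n-is-uniform} (taking $\epsilon$ negligible, so $\sigma = \Theta(\sqrt{\log n}) \cdot \lambda_n(\LL(\matB))$ suffices to smooth). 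Since the reduction does not know $\lambda_n$, I run it for each candidate value of $\sigma$ in a geometric grid between, say, $\norm{\matS}/\poly$ and $\norm{\matS}$ (note $\lambda_n(\LL(\matB)) \le \norm{\matS}$ and, using $r > \gamma \lambda_n$ with $\gamma$ polynomial, $\lambda_n$ is not too small relative to $r$), so that some grid point is within a constant factor of the correct width; this multiplies the running time by only a polynomial factor.

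Next I generate the average-case instance. I sample $\matU = [\vecu_1,\dots,\vecu_m]$ with each column drawn as $\vecu_j \sim D_{\Z^n + \vecv_j,\, \sigma\sqrt{2\pi}}$ for $\vecv_j \sim U([0,1)^n)$ — by \Cref{uniform-shift-discrete-gaussian-is-gaussian} this is statistically close to $\Norm(\zero,\sigma^2\matI_n)^{\otimes m}$, hence after rescaling by $1/\sigma$ it is statistically close to the $\SBP$ input distribution $\Norm(0,1)^{n\times m}$ (I use the discrete Gaussian so that the reduction actually produces a bona fide discrete object and so the columns lie in a coset of $\Z^n$). I then set $\matA = \matB^{-1}\matU \bmod \PP(\matB)$, which (as a column-wise application of \Cref{above-lambda-n-is-uniform} after the linear change of variables $\vecu \mapsto \matB^{-1}\vecu$) is within negligible statistical distance of $U(\PP(\matB^{-1}\matB)) $... more precisely, $\matB^{-1}\matU \bmod \Z^n$ is statistically close to $U([0,1)^n)^{\otimes m}$, and I feed $\sigma^{-1}$-rescaled-and-re-Gaussianized version into the $\SBP_\sbpError$ oracle. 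Here I must be a little careful: the oracle expects a Gaussian matrix, not a uniform one, so the actual object handed to the oracle is a fresh discrete Gaussian sample conditioned on the uniform coset representative $\matB^{-1}\matU \bmod \Z^n$, using the $\Z^n$-sampler; by \Cref{uniform-shift-discrete-gaussian-is-gaussian} again this is statistically close to $\Norm(\zero,\sigma^2\matI_n)^{\otimes m}$, as required.

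The oracle returns $\vecx \in \{-1,1\}^m$ with $\norm{(\text{input})\vecx}_\infty \le \sbpError(m/n)\sqrt{m}$ with non-negligible probability over both the instance and the oracle's coins; a standard averaging/amplification argument (as in the reduction definition in the excerpt) lets me assume success with probability bounded below by an inverse polynomial, and since the worst-case target problem is in $\mathsf{FNP}$ I can verify and retry. Writing the oracle input as $\matA + \matE$ where $\matE$ is the small rounding/re-Gaussianization perturbation with $\norm{\matE\vecx}_2$ controllable, I get $\matA\vecx = \vece'$ with $\norm{\vece'}_2 \le \sqrt{m}\cdot\norm{\cdot}_\infty \le \sbpError(m/n) m$ plus the perturbation term. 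Lifting through $\matB$: from $\matB^{-1}\matU\vecx \equiv \vece' \pmod{\Z^n}$ I obtain $\matU\vecx - \matB\vece' \in \LL(\matB)$, and I set $\vecs = \matU\vecx - \matB\vece' - (\text{rounding of }\vect)$, or more directly I incorporate $\vect$ by instead feeding $\matA$ built so that the target gets absorbed — the cleanest route is to run the $\SBP$ reduction to produce a short lattice vector and separately handle $\vect$ via the translate, exactly as $\IncGDD$ is reduced from $\GDD$/$\SIVP$ in \cite{DBLP:journals/siamcomp/MicciancioR07}; concretely I add $\vect$ as an extra "column" so the returned combination lands within the prescribed ball around $\vect$. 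The length bound is the crux: $\norm{\matU\vecx}_2 \le \norm{\matU}\cdot\norm{\vecx}_1 \le \norm{\matU}\cdot m$ by \Cref{bound-by-matrix-norm-and-ell-1}, and each column of $\matU$ has norm $O(\sigma\sqrt{n})$ with overwhelming probability by \Cref{chi-squared-tail-bound}, giving $\norm{\matU\vecx}_2 = O(\sigma\sqrt{n}\,m)$, which is a basis-independent-times-$\lambda_n$ quantity; meanwhile $\norm{\matB\vece'}_2 \le \sigma_{\max}(\matB)\norm{\vece'}_2$, and this is the term I need to force below $r + \norm{\matS}/8$. Plugging $\sbpError(m/n) = (n/m)^{1/2+\varepsilon}$ gives $\norm{\vece'}_2 \lesssim n^{1/2+\varepsilon}/m^\varepsilon$, so choosing $m = n^{c/\varepsilon}$ for a suitable constant $c$ drives every error contribution below the $\IncGDD$ tolerance while keeping $m$ polynomial in $n$, which is where the bound $\gamma(n) = n^{O(1/\varepsilon)}$ comes from.

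The main obstacle I anticipate is \textbf{the bookkeeping of the two distinct "error" sources and the target vector simultaneously}: the $\SBP$ solver only controls $\norm{\matA\vecx}_\infty$, but to land in the ball of radius $r + \norm{\matS}/8$ around $\vect$ I must combine (i) the solver's $\ell_\infty$ error amplified by $\sigma_{\max}(\matB)$, (ii) the Gaussian/rounding perturbation from using discrete Gaussians and from the uniform-to-Gaussian switch, and (iii) the contribution $\norm{\matU\vecx}_2$ of the "honest" part — and all three must come out basis-independently small relative to $\lambda_n(\LL(\matB))$, using only that $\sigma$ was chosen (up to the grid search) as a universal multiple of $\lambda_n$. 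Handling the target $\vect$ correctly — making sure the lattice vector the reduction outputs is genuinely close to $\vect$ and not merely short — is the step where I'd most closely mimic the $\IncGDD$ machinery of \cite{DBLP:journals/siamcomp/MicciancioR07}, likely by appending $\vect$ (suitably reduced mod $\PP(\matB)$) as an $(m{+}1)$-st column with a forced coefficient, and then checking that the $\SBP$ guarantee on the augmented instance still yields the required inequality.
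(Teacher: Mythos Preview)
Your high-level plan is right, but there is a genuine gap in the error accounting that would make the reduction fail as written.

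You reduce modulo $\LL(\matB)$ and obtain the lattice vector $\matU\vecx - \matB\vece'$, then bound $\norm{\matB\vece'}_2 \le \sigma_{\max}(\matB)\norm{\vece'}_2$. But $\matB$ is a \emph{worst-case} basis: $\sigma_{\max}(\matB)$ is not controlled by $\lambda_n(\LL(\matB))$ or by $\norm{\matS}$, and can be arbitrarily large. No choice of $m$ polynomial in $n$ will force this term below $r + \norm{\matS}/8$. The paper avoids this by working modulo $\matS$ rather than $\matB$: it sets $\widetilde{\matA} = \matS^{-1}(\matV+\matU) \bmod \Z^n$ (with $\matV$ uniform lattice points mod $\PP(\matS)$), so the lifted error is $\matS\vece'$ and $\norm{\matS\vece'}_2 \le \norm{\matS}\norm{\vece'}_1 \le n\norm{\matS}\norm{\vece'}_\infty$. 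Now it suffices to make $\norm{\vece'}_\infty \le 1/(8n)$, which is exactly what drives the choice $m = n^{\Theta(1/\varepsilon)}$. This is the whole point of the $\IncGDD$ interface: the auxiliary set $\matS$ is there so that the allowed error scales with $\norm{\matS}$, and you must exploit it.

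Two smaller points. First, your grid search for $\sigma$ is unnecessary: $r$ is part of the $\IncGDD$ input and satisfies $r > \gamma(n)\lambda_n(\LL(\matB))$, so setting $\sigma_1 = r/(4m)$ is automatically above smoothing once $\gamma(n) \ge 4m\ln n$, and simultaneously makes $\norm{\matU\vecx}_2 \le O(\sigma_1 m) \le r$ via \Cref{gaussian-singular-value}. Second, your handling of the target $\vect$ (``append as an $(m{+}1)$-st column with a forced coefficient'') does not work directly, since the $\SBP$ oracle chooses all $\pm 1$ signs. The paper's trick is to sample $\vecu_1 \sim \Norm(\vect,\sigma_1^2\matI_n)$ (the other columns centered at $\zero$) and then output $x_1(\matU\vecx + \matS\vece')$: since $x_1^2 = 1$, the $\vect$-component survives with coefficient $+1$ regardless of the sign the oracle picks.
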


We now state our main corollary for symmetric binary perceptrons.

\begin{corollary}\label{sivp-to-sbp}
    Suppose there is a polynomial time algorithm for $\SBP_{\sbpError}$ (on average) for $\sbpError(x) = 1/x^{1/2 + \varepsilon}$ for some constant $\varepsilon > 0$ that succeeds with non-negligible probability. Then, there are randomized polynomial time algorithms for the (worst-case) lattice problems $\SIVP_{\gamma}$, $\GapCRP_{\gamma}$, and $\GDD_{\gamma}$ for some polynomial $\gamma(n)$. In particular, Assumption \ref{polynomial-hardness-assumption} implies that there is no polynomial time algorithm for $\SBP_{\sbpError}$ (on average) for $\sbpError(x) = 1/x^{1/2 + \varepsilon}$ for some constant $\varepsilon > 0$ that succeeds with non-negligible probability.

    More generally, if there is a $T(n,m)$-time algorithm for $\SBP_{\sbpError}$ (on average) for $\sbpError(x) = 1/x^{1/2 + \varepsilon}$ for some constant $\varepsilon > 0$ that succeeds with non-negligible probability, then there are randomized $\poly(n, T(n, \poly(n)))$-time algorithms for the (worst-case) lattice problems $\SIVP_{\gamma}$, $\GapCRP_{\gamma}$, and $\GDD_{\gamma}$ for some polynomial $\gamma(n)$.
\end{corollary}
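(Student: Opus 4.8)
The plan is to derive the corollary by routine chaining from the core reduction \Cref{main-technical-reduction-to-sbp} (whose proof I sketch below). For the chaining: compose the polynomial-time reduction from $\IncGDD_{\gamma'}$ to $\SBP_{\sbpError}$ supplied there, for a polynomial $\gamma'(n) = n^{O(1/\varepsilon)}$, with \Cref{sivp-to-incgdd,gapcrp-to-incgdd,gdd-to-incgdd}, each of which costs only a constant-factor blow-up ($8$, $12$, $3$ respectively) in the approximation parameter. This yields randomized polynomial-time reductions from $\SIVP_\gamma$, $\GapCRP_\gamma$, and $\GDD_\gamma$ to $\SBP_{\sbpError}$ with $\gamma(n) := 12\gamma'(n)$, still a polynomial. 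A polynomial-time $\SBP_{\sbpError}$ solver succeeding with non-negligible probability then yields, through these reductions and the worst-case $\mathsf{FNP}$ amplification built into the notion of reduction, randomized polynomial-time algorithms for all three lattice problems --- contradicting \Cref{polynomial-hardness-assumption}. The $T(n,m)$-time refinement is immediate since every oracle query the reduction makes is an $\SBP$ instance with $m = \poly(n)$ columns, so the overall running time is $\poly(n, T(n, \poly(n)))$.

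To prove \Cref{main-technical-reduction-to-sbp} I would follow the Gaussian-smoothing template of \cite{DBLP:journals/siamcomp/MicciancioR07}. Given an $\IncGDD_\gamma$ instance $(\matB, \matS, \vect, r)$ with $r > \gamma(n)\lambda_n(\LL(\matB))$, first use the linearly independent set $\matS$ to preprocess $\matB$, in polynomial time, into a basis $\matB'$ of the same lattice with $\norm{\matB'} \le \poly(n)\cdot\norm{\matS}$. Then fix $m := n^{\Theta(1/\varepsilon)}$ and a Gaussian scale $\sigma := \Theta(\sqrt n\cdot r/\gamma(n))$; since $r/\gamma(n) > \lambda_n(\LL(\matB'))$, this $\sigma$ satisfies the hypothesis of \Cref{above-lambda-n-is-uniform} (with, say, $\epsilon = 2^{-2n}$). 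Sample $\matU = [\vecu_0 \mid \vecu_1 \mid \cdots \mid \vecu_{m-1}]$ with $\vecu_0 \sim \Norm(\vect, \sigma^2\matI_n)$ and $\vecu_1,\dots,\vecu_{m-1}\sim\Norm(\zero,\sigma^2\matI_n)$ i.i.d.; then (applying \Cref{above-lambda-n-is-uniform} columnwise, with the shift $\vect$ absorbing the target in the first column, and then transforming by $(\matB')^{-1}$) the matrix $\mathbf{A}_0 := (\matB')^{-1}\matU \bmod \Z^{n\times m}$ is within negligible statistical distance of $U([0,1)^{n\times m})$. Since the $\SBP$ oracle wants a standard Gaussian matrix, I would convert $\mathbf{A}_0$ to $\matA \approx \Norm(0,1)^{n\times m}$ columnwise using the efficiently sampleable discrete Gaussian of \Cref{uniform-shift-discrete-gaussian-is-gaussian}: sample $\mathbf{A}_1$ with each column drawn from $D_{\Z^n + (\text{that column of }\mathbf{A}_0),\, \sigma_0\sqrt{2\pi}}$ for $\sigma_0 = \Theta(\sqrt{\log n})$, and set $\matA := \mathbf{A}_1/\sigma_0$; the key invariant is $\mathbf{A}_1 \equiv \mathbf{A}_0 \equiv (\matB')^{-1}\matU \pmod{\Z^{n\times m}}$.

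Feeding $\matA$ to the oracle returns $\vecx\in\{\pm1\}^m$ with $\norm{\matA\vecx}_\infty \le \sbpError(m/n)\sqrt m$, so $\vece := \mathbf{A}_1\vecx$ has $\norm{\vece}_\infty \le \sigma_0\,\sbpError(m/n)\sqrt m = \poly(\log n)\cdot n^{1/2+\varepsilon}/m^\varepsilon$. By integrality of $\vecx$ and the invariant, $\matU\vecx - \matB'\vece \in \LL(\matB')$; writing $\matU\vecx = x_0\vect + \mathbf{G}\vecx$, where $\mathbf{G}$ (computable as $\matU$ with $\vect$ subtracted from column $0$) has i.i.d.\ $\Norm(\zero,\sigma^2\matI_n)$ columns, and multiplying by $x_0\in\{\pm1\}$ gives an explicitly computable $\vecs \in \LL(\matB') = \LL(\matB)$ with $\vecs - \vect = x_0(\mathbf{G}\vecx - \matB'\vece)$. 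It remains to bound $\norm{\vecs-\vect}_2 \le \norm{\mathbf{G}\vecx}_2 + \norm{\matB'\vece}_2$. For the first term, $\norm{\mathbf{G}\vecx}_2 \le \sigma_{\max}(\mathbf{G})\sqrt m = O(\sigma m)$ with overwhelming probability by \Cref{gaussian-singular-value}, which is $\le r/2$ provided $\gamma(n)$ is a large enough polynomial multiple of $\sqrt n\,m$. For the second term, \Cref{bound-by-matrix-norm-and-ell-1} and \Cref{ell-1-to-ell-infty} give $\norm{\matB'\vece}_2 \le \norm{\matB'}\cdot n\norm{\vece}_\infty \le \poly(n)\cdot\norm{\matS}/m^\varepsilon$, which is $\le \norm{\matS}/8$ once $m^\varepsilon$ dominates that polynomial, i.e.\ once $m = n^{\Theta(1/\varepsilon)}$. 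Hence $\norm{\vecs-\vect}_2 \le r + \norm{\matS}/8$, a valid $\IncGDD_\gamma$ solution; the two polynomial lower bounds on $m$ and on $\gamma(n)$ are jointly satisfiable with $\gamma(n) = n^{O(1/\varepsilon)}$, and repeating the randomized reduction $\poly(n)$ times amplifies the (non-negligible) oracle success probability to $1 - 2^{-n}$.

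The main obstacle is exactly this joint parameter balance. The scale $\sigma$ must be large enough to smooth $\LL(\matB)$ yet small enough that $\sigma m \le r$; the error $\norm{\matB'\vece}_2$ must be controlled \emph{without} an exponential loss, which rules out naive LLL preprocessing and is precisely why the $\matS$-aware formulation of $\IncGDD$ is the right starting point (it both supplies a short-ish generating set and grants a $\norm{\matS}/8$ slack in the decoding target); and the only leverage for pushing the exponent below $1/2$ is to take $m$ a large polynomial in $n$, which is exactly what the extra $\varepsilon$ in $\sbpError(x) = 1/x^{1/2+\varepsilon}$ pays for. Everything else --- the statistical-distance accounting for the constructed matrix $\matA$, the integrality bookkeeping, and the amplification --- is routine.
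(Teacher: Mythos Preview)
Your derivation of the corollary itself is exactly the paper's: a one-line composition of \Cref{main-technical-reduction-to-sbp} with \Cref{sivp-to-incgdd,gapcrp-to-incgdd,gdd-to-incgdd}, and the $T(n,m)$ refinement follows because all oracle calls have $m=\poly(n)$.

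Your additional sketch of \Cref{main-technical-reduction-to-sbp} is correct but differs from the paper in one technical choice. Rather than preprocessing $\matB$ to a basis $\matB'$ with $\norm{\matB'}\le\poly(n)\norm{\matS}$ and then reducing modulo $\LL(\matB')$, the paper works directly modulo $\PP(\matS)$: it sets $\widetilde{\matA}=\matS^{-1}(\matV+\matU)\bmod\Z^n$, where the columns of $\matV$ are uniform over $\LL(\matB)\bmod\PP(\matS)$ (needed precisely because $\matS$ is not a basis of $\LL(\matB)$, so smoothing alone does not make $\matU\bmod\PP(\matS)$ uniform). The payoff is that the error term in the output becomes $\matS\vece'$ rather than $\matB'\vece$, so the bound $\norm{\matS\vece'}_2\le n\norm{\matS}\norm{\vece'}_\infty$ yields the $\norm{\matS}/8$ slack directly, without the extra $\poly(n)$ loss from basis conversion. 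Both routes land at $m,\gamma=n^{\Theta(1/\varepsilon)}$, so the distinction is cosmetic; your version trades the auxiliary randomization $\matV$ for a one-time basis-size reduction.
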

\begin{proof}[Proof of \Cref{sivp-to-sbp}]
    This follows by directly composing \Cref{sivp-to-incgdd,gapcrp-to-incgdd,gdd-to-incgdd} and \Cref{main-technical-reduction-to-sbp}.
\end{proof}

\begin{remark}\label{sharper-sbp-reduction}
    For a sharper bound on $\sbpError$ in \Cref{main-technical-reduction-to-sbp} and \Cref{sivp-to-sbp}, we can instead assume subexponential hardness of approximate worst-case lattice problems. Specifically, for $\sbpError(x) = \frac{1}{\sqrt{x} \log^{1 + c}(x)}$ where $c > 0$, we can set $m = 2^{O\left(n^{3/(2c)} \right)}$ and $\gamma(n) = 2^{O\left(n^{3/(2c)} \right)}$ in the reduction from $\IncGDD$. In particular, for $c > 3$, Assumption~\ref{subexp-hardness-assumption} implies that there is no polynomial time algorithm for $\SBP_{\sbpError}$ (on average) for $\sbpError(x) = \frac{1}{\sqrt{x} \log^{1 + c}(x)}$ that succeeds with non-negligible probability. 
\end{remark}

\subsection{Proof of Theorem~\ref{main-technical-reduction-to-sbp}}

We now prove \Cref{main-technical-reduction-to-sbp}.

\begin{proof}[Proof of \Cref{main-technical-reduction-to-sbp}]
Let $(\matB \in \R^{n \times n}, \matS \in \R^{n \times n}, \vect \in \R^n, r \in \R)$ be the given $\IncGDD$ instance. Let
\begin{align*} \sigma_2 &= \ln n,
\\m &= \ceil{\left(8 \sigma_2 n^{3/2 + \varepsilon} \right)^{1/\varepsilon}} = n^{ \Theta(1/\varepsilon)},
\\\sigma_1 &= \frac{r}{4m},
\\\gamma(n) &= 4 m \ln n = n^{\Theta(1/\varepsilon)}.
\end{align*}
Sample $\vecu_1 \sim \Norm(\vect, \sigma_1^2 \matI_n)$, $\vecu_2, \cdots, \vecu_{m} \sim_{\iid} \Norm(\zero, \sigma_1^2 \matI_n)$. Let $\matU = [\vecu_1, \vecu_2, \cdots, \vecu_m] \in \R^{n \times m}$. Sample $m$ uniformly random lattice vectors $\vecv_i \in \LL(\matB) \mod \PP(\matS)$ (see~\cite[Proposition 2.9]{DBLP:journals/siamcomp/Micciancio04}), and let $\matV = [\vecv_1, \vecv_2, \cdots, \vecv_m] \in \R^{n \times m}$. Define
\[ \widetilde{\matA} = \matS^{-1}(\matV + \matU) \mod{\Z^n} \in [0,1)^{n \times m}.\]
\begin{proposition}\label{tilde-A-is-uniform}
The distribution of $\widetilde{\matA}$ is statistically close to $U([0,1)^{n \times m})$.
\end{proposition}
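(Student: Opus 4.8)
The plan is to show that each column of $\widetilde{\matA}$ is close to uniform over $[0,1)^n$, and then conclude for the whole matrix by independence and a hybrid argument. Fix a column index $i$. The $i$th column of $\matV + \matU$ is $\vecv_i + \vecu_i$, where $\vecv_i$ is uniform over $\LL(\matB) \bmod \PP(\matS)$ and $\vecu_i$ is a Gaussian (centered at $\vect$ for $i=1$, at $\zero$ otherwise) with covariance $\sigma_1^2 \matI_n$, and these are independent. The key observation is that the map $\vecw \mapsto \matS^{-1} \vecw \bmod \Z^n$ is (up to the linear change of coordinates by $\matS^{-1}$) exactly reduction modulo $\PP(\matS)$: indeed $\matS^{-1}\vecw \bmod \Z^n = \matS^{-1}(\vecw \bmod \PP(\matS))$, and $\matS^{-1}$ carries $\PP(\matS)$ bijectively (and measure-preservingly after normalization) onto $[0,1)^n$. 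So it suffices to show that $(\vecv_i + \vecu_i) \bmod \PP(\matS)$ is statistically close to $U(\PP(\matS))$.

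For this I would use the standard smoothing argument. Since $\vecv_i \in \LL(\matB)$ and $\LL(\matS) \subseteq \LL(\matB)$ (the columns of $\matS$ are lattice vectors of $\LL(\matB)$, being linearly independent elements of $\LL(\matB)$ from the $\IncGDD$ input), reducing $\vecv_i + \vecu_i$ modulo $\PP(\matS)$ already ``uses up'' the $\vecv_i$ — more precisely, it is cleanest to first note that $\vecv_i \bmod \PP(\matS)$ is uniform over $\LL(\matB) \cap \PP(\matS)$ by construction, but actually the simplest route is: condition on $\vecv_i$; then $(\vecv_i + \vecu_i)\bmod\PP(\matS)$ has the law of $\Norm(\vecmu_i + \vecv_i, \sigma_1^2\matI_n) \bmod \PP(\matS)$, and by \Cref{above-lambda-n-is-uniform} this is within $\epsilon/2$ of $U(\PP(\matS))$ in statistical distance, \emph{provided} $\sigma_1 \geq \sqrt{\ln(2n(1+1/\epsilon))/(2\pi^2)}\cdot \lambda_n(\LL(\matS))$. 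Since this bound holds for every fixed value of $\vecv_i$, it holds after averaging over $\vecv_i$ as well, so the $i$th column of $\matS^{-1}(\matV+\matU)\bmod\Z^n$ is within $\epsilon/2$ of $U([0,1)^n)$. Applying the standard hybrid/triangle-inequality argument over the $m$ independent columns gives total statistical distance at most $m\epsilon/2$, and choosing $\epsilon$ negligible (e.g. $\epsilon = 2^{-n}$, which only costs a $\poly(n)$ factor inside the logarithm) makes $m\epsilon/2$ negligible since $m = n^{\Theta(1/\varepsilon)}$ is polynomial in $n$.

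The one thing that actually needs checking — and the main obstacle — is the smoothing hypothesis $\sigma_1 \geq \sqrt{\ln(2n(1+1/\epsilon))/(2\pi^2)}\cdot \lambda_n(\LL(\matS))$. Here $\sigma_1 = r/(4m)$ and $r > \gamma(n)\cdot\lambda_n(\LL(\matB))$ with $\gamma(n) = 4m\ln n$, so $\sigma_1 > \ln n \cdot \lambda_n(\LL(\matB)) \geq \ln n \cdot \lambda_n(\LL(\matS))$, the last step because $\LL(\matS)\subseteq\LL(\matB)$ forces $\lambda_n(\LL(\matS)) \geq \lambda_n(\LL(\matB))$ (a sublattice has successive minima at least as large). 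With $\epsilon = 2^{-n}$ the required lower bound on $\sigma_1/\lambda_n(\LL(\matS))$ is $\sqrt{\ln(2n(1+2^n))/(2\pi^2)} = O(\sqrt n)$, which is comfortably dominated by $\ln n \cdot \lambda_n(\LL(\matB))/\lambda_n(\LL(\matS)) \geq \ln n$ for... wait, $\ln n$ does not dominate $\sqrt n$. So in fact one must either invoke a sharper smoothing bound (e.g. via the covering radius, $\eta_\epsilon \lesssim \sqrt{\log(1/\epsilon)}\cdot\nu$, combined with $\nu(\LL(\matS)) \leq \frac{\sqrt n}{2}\lambda_n(\LL(\matS))$, still only saves constants) or — the right fix — take $\epsilon$ merely negligible rather than exponentially small, say $\epsilon = n^{-\omega(1)}$ with $\log(1/\epsilon) = \polylog(n)$; then the required ratio is $\polylog(n)$, still not obviously beaten by $\ln n$. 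The cleanest resolution, and the one I would actually write, is to observe that the $\IncGDD$ parameter $\gamma(n) = 4m\ln n$ can absorb whatever $\polylog$ factor the smoothing lemma demands: choose $\epsilon$ negligible with $\sqrt{\ln(2n(1+1/\epsilon))/(2\pi^2)} \leq \ln n$ — which holds for, e.g., $\epsilon = 2^{-(\ln n)^2}$, a negligible function — so that $\sigma_1 > \ln n \cdot \lambda_n(\LL(\matB)) \geq \sqrt{\ln(2n(1+1/\epsilon))/(2\pi^2)}\cdot\lambda_n(\LL(\matS))$ as needed, and then $m\epsilon/2$ is still negligible. I would double-check the exact constants against \Cref{above-lambda-n-is-uniform} when writing the final version, but this is the structure.
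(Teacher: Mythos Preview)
There is a genuine gap, and you actually put your finger on it before talking yourself out of it. Your chain
\[
\sigma_1 > \ln n \cdot \lambda_n(\LL(\matB)) \;\geq\; \ln n \cdot \lambda_n(\LL(\matS))
\]
is backwards: as you yourself note, $\LL(\matS)\subseteq\LL(\matB)$ implies $\lambda_n(\LL(\matS)) \geq \lambda_n(\LL(\matB))$, so the second inequality goes the wrong way. And this is not a constant-factor nuisance that can be absorbed into $\gamma(n)$: in an $\IncGDD$ instance, $\matS$ is an \emph{arbitrary} set of linearly independent lattice vectors, so $\lambda_n(\LL(\matS))$ can be as large as $\norm{\matS}$, with no a priori bound in terms of $\lambda_n(\LL(\matB))$. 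Smoothing directly over $\LL(\matS)$ therefore cannot work with the given $\sigma_1$, no matter how you tune $\epsilon$ or $\gamma$.

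The fix is exactly the route you dismissed as less simple: smooth the Gaussian over the \emph{fine} lattice $\LL(\matB)$ (for which $\sigma_1 \geq \ln n \cdot \lambda_n(\LL(\matB))$ is enough, with $\epsilon = e^{-\ln^2 n}$), obtaining that each $\vecu_i \bmod \PP(\matB)$ is close to $U(\PP(\matB))$. Then use the randomness of $\vecv_i$, which is uniform over the coset representatives $\LL(\matB)\cap\PP(\matS)$ of $\LL(\matS)$ in $\LL(\matB)$: since those cosets tile $\R^n/\LL(\matS)$ by translates of $\PP(\matB)$, adding a uniform $\vecv_i$ lifts $U(\PP(\matB))$ to $U(\PP(\matS))$. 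This is precisely why the reduction samples $\matV$ at all; in your conditioning argument $\vecv_i$ becomes a mere shift of the Gaussian mean and contributes nothing.
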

\begin{proof}[Proof of \Cref{tilde-A-is-uniform}]
    Recall that by definition of $\IncGDD$, we know
    \[ r > \gamma(n) \cdot \lambda_n(\LL(\matB)) = 4m \ln n \cdot \lambda_n(\LL(\matB)).\]
    In anticipation of applying \Cref{above-lambda-n-is-uniform}, we observe that for sufficiently large $n$,
    \[ \sigma_1 = \frac{r}{4m} \geq \ln n \cdot \lambda_n(\LL(\matB)) \geq \sqrt{\frac{\ln\left(2n \left(1 + 1/e^{-\ln^2(n)} \right)\right)}{2 \pi^2}} \cdot \lambda_n(\LL(\matB)). \]
    Therefore, we can invoke \Cref{above-lambda-n-is-uniform} $m$ times (once with $\vecmu = \vect$, $m-1$ times with $\vecmu = \zero$) and the triangle inequality to see that 
    \[ \Delta\left(\matU \mod{\PP(\matB)}, U(\PP(\matB))^m \right) \leq m \cdot \frac{e^{-\ln^2(n)}}{2} = \negl(n).\]
    Since $\matV$ has columns that are uniform elements of $\LL(\matB) \mod{\PP(\matS)}$, and since $\LL(\matS) \subseteq \LL(\matB)$, it follows that 
    \[ \Delta\left(\matU + \matV  \mod{\PP(\matS)}, U(\PP(\matS))^m \right) \leq \negl(n).\]
    Multiplying on the left by $\matS^{-1}$ gives
    \[ \Delta\left(\widetilde{\matA}, U(\PP(\Z^n))^m\right) \leq \negl(n), \]
    or equivalently, that $\widetilde{\matA}$ is statistically close to uniform over $[0,1)^{n \times m}$.
\end{proof}
For $j \in [m]$, let $\widetilde{\veca}_j \in [0,1)^n$ denote the $j$th column of $\widetilde{\matA}$. For all $j \in [m]$, sample $\vecw_j \sim D_{\Z^n + \widetilde{\veca}_{j}, \sigma_2 \sqrt{2\pi}}$. Let $\matW = [\vecw_1, \cdots, \vecw_m]$. Note that by construction, $\vecw_j \equiv \widetilde{\veca}_j \mod \Z^n$, i.e., $\matW \equiv \widetilde{\matA} \mod \Z^n$. In anticipation of applying \Cref{uniform-shift-discrete-gaussian-is-gaussian}, we observe that for sufficiently large $n$,
\[ \sigma_2 = \ln(n) \geq \sqrt{\frac{\ln\left(2n\left(1 + 1/e^{-\ln^2(n)} \right)\right)}{2 \pi^2}}.\] Therefore, by invoking \Cref{uniform-shift-discrete-gaussian-is-gaussian} $m$ times, the triangle inequality, and \Cref{tilde-A-is-uniform}, we see that
\[ \Delta\left(\matW, \Norm\left(0,\sigma_2^2\right)^{n \times m}\right) \leq 4m e^{-\ln^2(n)} + \negl(n) = \negl(n). \]
Let $\matA = \frac{1}{\sigma_2}  \matW$. It follows that $\matA$ is statistically close to $\Norm(0, 1)^{n \times m}$.

Feed $\matA$ into the $\SBP_{\sbpError}$ solver to receive some $\vecx \in \{-1, 1\}^m$ such that $\norm{\matA \vecx}_{\infty} \leq \sbpError(m/n) \cdot \sqrt{m}$. Let $\vece = - \matA \vecx \in \R^n$ with $\norm{\vece}_{\infty} \leq \sbpError(m/n) \cdot \sqrt{m}$. Let $x_1 \in \{-1, 1\}$ be the first entry of $\vecx$. Let $\vece' = \sigma_2 \vece \in \R^n$. The reduction outputs $\vecs = x_1(\matU \vecx + \matS \vece') \in \R^n$.

We first argue that $\vecs \in \LL(\matB)$. Since $\matA \vecx + \vece = \zero$, by scaling up by $\sigma_2$, we have $\matW \vecx + \vece' = \zero$. Since $\vecx \in \Z^m$,
\[ \zero = \matW \vecx + \vece' \equiv \widetilde{\matA} \vecx + \vece' \equiv  \matS^{-1}(\matV + \matU) \vecx + \vece' \mod \Z^n, \]
meaning that $\matS^{-1} (\matV + \matU) \vecx + \vece' \in \Z^n$, and thus
\[ (\matV + \matU) \vecx + \matS \vece' \in \LL(\matS) \subseteq \LL(\matB). \]
Since $\matV$ contains vectors in $\LL(\matB)$ and $\vecx \in \Z^m$, we can subtract by $\matV \vecx$ to get
\[ \matU \vecx + \matS \vece' \in \LL(\matB).\]
Multiplying by the sign $x_1 \in \{-1, 1\}$ gives
\[ \vecs = x_1(\matU \vecx + \matS \vece') \in \LL(\matB), \]
as desired.

We next argue that the norm of $\vecs - \vect$ is small. Decompose $\vecx$ as $\vecx^\top = [x_1 || \vecx_{-1}^\top]$, and decompose $\matU$ as $\matU = [\vecu_1 || \matU_{-1}]$. We then have
\begin{align*}
\vecs = x_1(\matU \vecx + \matS \vece') &= x_1 (\vecu_1 x_1 + \matU_{-1} \vecx_{-1} + \matS \vece')
\\&= x_1^2 \vecu_1 + x_1\matU_{-1} \vecx_{-1} + x_1 \matS \vece'
\\&= \vecu_1 + x_1\matU_{-1} \vecx_{-1} + x_1 \matS \vece'
\\&= \vect + \vecu_1' + x_1\matU_{-1} \vecx_{-1} + x_1 \matS \vece',
\end{align*}
where the distribution of $\vecu_1'$ is $\Norm(\zero, \sigma_1^2 \matI_n)$. It follows that with all but negligible probability,
\begin{align*}
\norm{\vecs - \vect}_2 &= \norm{\vecu_1' + x_1\matU_{-1} \vecx_{-1} + x_1\matS \vece'}_2 &
\\&\leq \norm{\vecu_1'}_2 + \norm{\matU_{-1} \vecx_{-1}}_2 + \norm{\matS \vece'}_2 &
\\&\leq \sigma_1 \sqrt{\frac{5n}{2}} + \sigma_{\max}(\matU_{-1}) \norm{\vecx_{-1}}_2 + \norm{\matS \vece'}_2 & \text{(by \Cref{chi-squared-tail-bound})}
\\&\leq \sigma_1 \sqrt{\frac{5n}{2}} + \sigma_{\max}(\matU_{-1}) \sqrt{m-1} + \norm{\matS} \norm{\vece'}_1 & \text{(by \Cref{bound-by-matrix-norm-and-ell-1})}
\\&\leq \sigma_1 \sqrt{\frac{5n}{2}}  + \sigma_{\max}(\matU_{-1}) \sqrt{m-1} + n \norm{\matS} \norm{\vece'}_{\infty} & \text{(by \Cref{ell-1-to-ell-infty})}
\\&\leq \sigma_1 \sqrt{\frac{5n}{2}}  + \frac{3\sigma_1 \sqrt{m}}{2} \cdot \sqrt{m-1} +  n \norm{\matS} \norm{\vece'}_{\infty} & \text{(by \Cref{gaussian-singular-value})}
\\&\leq 4\sigma_1 m + n \norm{\matS} \norm{\vece'}_{\infty} & 
\\&\leq r + n \norm{\matS} \norm{\vece'}_{\infty}. & 
\end{align*}
Therefore, it suffices to show that $\norm{\vece'}_{\infty} \leq 1/(8n)$. Recall that we have
\begin{align*}
\norm{\vece'}_{\infty} = \sigma_2 \norm{\vece}_{\infty} \leq \sigma_2 \cdot \sbpError(m/n) \cdot \sqrt{m} = \sigma_2 \cdot \left(\frac{n}{m}\right)^{1/2 + \varepsilon} \sqrt{m} = \sigma_2 \cdot \frac{n^{1/2 + \varepsilon}}{m^{\varepsilon}}.
\end{align*}
Since $m \geq (8 \sigma_2 n^{3/2 + \varepsilon})^{1/\varepsilon} $, we have $\norm{\vece}_{\infty} \leq 1/(8n)$, as desired.

Lastly, we note that the $\SBP_{\sbpError}$ solver need only succeed with some non-negligible probability $\mu$. As a result, we can repeat this whole process $O(1/\mu) = \poly(n, m)$ times, and since we can efficiently verify whether the $\SBP_{\sbpError}$ solver succeeded, the reduction will still go through.
\end{proof}

\subsection{Variants and Generalizations}\label{sec:variants-and-generalizations}

We mention a few variants of $\SBP$ for which the reduction in \Cref{main-technical-reduction-to-sbp} would also apply. As they are not critical to our main result, for simplicity, we only sketch the justifications.
\begin{enumerate}
    \item \textbf{Uniform $\matA$}. Instead of having $\matA \sim \Norm(0, 1)^{n \times m}$, if we had a $\SBP$ solver that worked with $\matA \sim U([0,1])^{n \times m}$, our reduction would actually be simpler and more direct. In particular, there would be no need for discrete Gaussian sampling.
    
    \item\label{item:zero-entries-in-x} \textbf{Zero entries in $\vecx$}. Instead of requiring $\vecx \in \{\pm 1\}^m$ from the $\SBP$ solver, if we instead allowed $\vecx \in \{-1, 0, 1\}^m$ with $\vecx \neq \zero$ from the $\SBP$ solver, a similar reduction to the one in \Cref{main-technical-reduction-to-sbp} would work as well. The main difference is that the reduction would instead ``guess'' a coordinate $j \in [m]$ for which $x_j \neq 0$ (with success probability at least $1/m$) and put the vector $\vect$ in the mean of that coordinate, instead of the first coordinate. (See \cite{DBLP:journals/siamcomp/MicciancioR07} for more rigorous details.)

    A priori, the version of $\SBP$ that allows zero-entries in $\vecx$ is in fact a lot easier. In particular, for $\sbpError(x) = 1/\sqrt{x}$, setting $\vecx = (1, 0, \dots, 0)^\top$ would get $\norm{\matA \vecx}_{\infty} \leq \widetilde{O}(1) \ll \sbpError(m/n) \sqrt{m} = \sqrt{n}$ with high probability by standard Gaussian tail bounds. However, in our reduction, we have $\sbpError(x) = 1/x^{1/2 + \varepsilon}$. The bound on $\norm{\matA \vecx}_{\infty}$ is
    \[\norm{\matA \vecx}_{\infty} \leq \sbpError(m/n) \sqrt{m} = \frac{\sqrt{m}}{(m/n)^{1/2 + \varepsilon}} = \frac{n^{1/2 + \varepsilon}}{m^\varepsilon}. \]
    In our reduction, we set $m$ so that $m^\varepsilon \gg n^{1/2 + \varepsilon}$, making $\norm{\matA \vecx}_{\infty} \ll 1$, in particular, a stronger requirement than $\norm{\matA \vecx}_{\infty} \leq \widetilde{O}(1)$.
    
    \item \textbf{Larger $\vecx \in \Z^m$}. Instead of (in particular) requiring $\norm{\vecx}_{\infty} \leq 1$ from the $\SBP$ solver, one could relax this requirement to $\norm{\vecx}_{\infty} \leq B$ for some larger bound $B \in \N$. In addition to the modifications discussed in \Cref{item:zero-entries-in-x}, we would put the vector $\vect/z$ in the mean of that coordinate, where $z \sim U(\{-B, -B+1, \dots, B-1, B\} \setminus \{0\})$. The runtime, $\gamma$, and $\sbpError$ would now worsen by a factor of $\Theta(B)$. (See \cite{DBLP:journals/siamcomp/MicciancioR07} for more rigorous details.)
\end{enumerate}

\section{Reduction to Number Partitioning}\label{sec:nbp}

\begin{lemma}[Chinese Remainder Theorem]\label{plain-crt}
    Let $p_1, \dots, p_n \in \N$ be distinct positive prime numbers. For $q = \prod_{i \in [n]} p_i$, there is a group isomorphism
    \[ \varphi : \bigoplus_{i \in [n]} \Z / p_i \Z \longrightarrow \Z/q\Z. \]
    Moreover, this map can be written as
    \[ \varphi: (y_1, \cdots, y_n) \mapsto \sum_{i \in [n]} c_i y_i \]
    for $c_i \in \Z$ such that $q/p_i$ divides $c_i$ for all $i \in [n]$ (so that this map is well-defined). Furthermore, the inverse map $\varphi^{-1} : \Z/q\Z \to \bigoplus_{i \in [n]} \Z / p_i \Z$ can be written as
    \[ \varphi^{-1} : z \mapsto (z, z, \dots, z),\]
    where for all $i \in [n]$, $z \in \Z/q\Z = \{0, \dots, q-1\}$ is interpreted directly as an element of $\Z/p_i\Z = \{0, \dots, p_i-1\}$ by reduction modulo $p_i$.
\end{lemma}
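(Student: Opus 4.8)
The plan is to construct the inverse map first and then invert it. Define $\psi : \Z/q\Z \to \bigoplus_{i \in [n]} \Z/p_i\Z$ by $\psi(z) = (z \bmod p_1, \dots, z \bmod p_n)$; this is well-defined and a group homomorphism because each $p_i$ divides $q$. I would then show $\psi$ is injective: if $z \equiv 0 \pmod{p_i}$ for every $i \in [n]$, then since the $p_i$ are distinct primes we have $\lcm(p_1, \dots, p_n) = q$, so $q \mid z$ and hence $z = 0$ in $\Z/q\Z$. Both sides are finite of the same cardinality $q = \prod_{i \in [n]} p_i$, so injectivity forces $\psi$ to be a bijection, hence a group isomorphism; set $\varphi := \psi^{-1}$. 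This immediately yields the claimed description $\varphi^{-1}(z) = (z, z, \dots, z)$ with each coordinate interpreted modulo $p_i$.

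It remains to exhibit the $\Z$-linear form of $\varphi$. For each $i$ write $q_i := q/p_i$. Since $\gcd(q_i, p_i) = 1$, choose $d_i \in \Z$ with $d_i q_i \equiv 1 \pmod{p_i}$ and set $c_i := d_i q_i$, so that $q/p_i = q_i$ divides $c_i$. By construction $c_i \equiv 1 \pmod{p_i}$, while $c_i \equiv 0 \pmod{p_j}$ for every $j \neq i$ because $p_j \mid q_i$. Consequently, for any tuple $(y_1, \dots, y_n)$, applying $\psi$ to $\sum_{i \in [n]} c_i y_i$ recovers $(y_1, \dots, y_n)$ coordinatewise, which shows $\varphi(y_1, \dots, y_n) = \sum_{i \in [n]} c_i y_i \pmod q$. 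I would also verify well-definedness of this expression directly, as the later reductions depend on it: replacing $y_i$ by $y_i + p_i$ changes the sum by $c_i p_i$, which is divisible by $q_i p_i = q$ since $q_i \mid c_i$, so the value modulo $q$ is unchanged; in particular the map depends only on each $y_i$ modulo $p_i$.

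There is no genuine obstacle here — the statement is the classical Chinese Remainder Theorem, and the only mild care required is matching the explicit coefficients $c_i$ to the abstract isomorphism $\psi^{-1}$ and checking the divisibility bookkeeping ($q_i \mid c_i$) that guarantees both well-definedness and the structural property asserted in the lemma.
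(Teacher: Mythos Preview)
Your argument is correct and is the standard textbook proof of the Chinese Remainder Theorem: define the diagonal reduction map, check injectivity via $\lcm(p_1,\dots,p_n)=q$, conclude bijectivity by a cardinality count, and then exhibit the idempotents $c_i = d_i q_i$ to realize the inverse as a $\Z$-linear form. The divisibility bookkeeping you flag ($q_i \mid c_i$) is exactly what is needed for well-definedness and matches the statement.

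As for comparison: the paper does not actually supply a proof of this lemma. It is stated as a classical fact and used as a black box (the only proof given in that section is for the normalized variant, which simply rescales the map from this lemma). So there is no ``paper's proof'' to compare against; your write-up would serve perfectly well if a proof were to be included.
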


\begin{lemma}[Normalized CRT]\label{normalized-crt}
    Let $p_1, \dots, p_n \in \N$ be distinct positive prime numbers. For $q = \prod_{i \in [n]} p_i$, there is a group isomorphism
    \[ \widetilde{\varphi} : \bigoplus_{i \in [n]} 1/p_i \cdot  \Z / p_i \Z \longrightarrow 1/q \cdot \Z/q\Z. \]
    Moreover, there exists an integer vector $\vecc \in \Z^n$ such that this map can be written as
    \[ \widetilde{\varphi} : (y_1, \cdots, y_n) \mapsto \vecc^\top \vecy = \sum_{i \in [n]} c_i y_i. \]
    Furthermore, the inverse map $\widetilde{\varphi}^{-1} : 1/q \cdot \Z/q\Z \to \bigoplus_{i \in [n]} 1/p_i \cdot \Z / p_i \Z$ can be written as
    \[ \widetilde{\varphi}^{-1} : z \mapsto \left(\frac{q}{p_1} z, \frac{q}{p_2} z, \dots, \frac{q}{p_n} z \right).\]
\end{lemma}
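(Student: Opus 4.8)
The plan is to obtain $\widetilde\varphi$ by rescaling the ordinary CRT isomorphism $\varphi$ of \Cref{plain-crt}. For each $i \in [n]$ let $\mu_i \colon \tfrac1{p_i}\cdot\Z/p_i\Z \to \Z/p_i\Z$ denote multiplication by $p_i$; this is a well-defined group isomorphism, since it carries the coset $\tfrac{a}{p_i}+\Z$ to $a + p_i\Z$, and this is invertible because $a \equiv a' \pmod{p_i}$ iff $\tfrac{a}{p_i} \equiv \tfrac{a'}{p_i} \pmod 1$, the inverse being $a \mapsto \tfrac{a}{p_i}$. Similarly let $\mu \colon \tfrac1q\cdot\Z/q\Z \to \Z/q\Z$ be multiplication by $q$, again an isomorphism. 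I would then simply \emph{define}
\[ \widetilde\varphi \;:=\; \mu^{-1}\circ\varphi\circ\Big(\bigoplus_{i\in[n]}\mu_i\Big), \]
which is a group isomorphism of the claimed type, being a composition of group isomorphisms. (One could alternatively skip the auxiliary maps and verify directly that the two displayed formulas in the statement are mutually inverse homomorphisms, but the transport-of-structure route makes the isomorphism property automatic.)

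The remaining two assertions of the lemma are then obtained by unwinding this definition. For the explicit form of $\widetilde\varphi$, using $\varphi(y_1,\dots,y_n) = \sum_{i} c_i y_i$ from \Cref{plain-crt} gives
\[ \widetilde\varphi(y_1,\dots,y_n) \;=\; \frac1q\sum_{i\in[n]} c_i p_i\, y_i \pmod 1 \;=\; \sum_{i\in[n]} \frac{c_i p_i}{q}\, y_i \pmod 1, \]
so the integer vector required by the statement is $\vecc = (c_1 p_1/q,\dots,c_n p_n/q)$. For the inverse, unwinding $\widetilde\varphi^{-1} = \big(\bigoplus_i \mu_i^{-1}\big)\circ\varphi^{-1}\circ\mu$ and plugging in the formula $\varphi^{-1}(z) = (z,\dots,z)$ (reduction modulo $p_i$ in slot $i$) from \Cref{plain-crt}: writing a general element of $\tfrac1q\cdot\Z/q\Z$ as $z = b/q$, we have $\mu(z) = b$, then $b \bmod p_i$ in slot $i$, and then $\mu_i^{-1}$ divides by $p_i$, yielding $\tfrac{b}{p_i} = \tfrac{q}{p_i}z \pmod 1$; hence $\widetilde\varphi^{-1}(z) = (\tfrac{q}{p_1}z,\dots,\tfrac{q}{p_n}z)$, as claimed.

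The only substantive point is that $\vecc \in \Z^n$: this is exactly the divisibility $(q/p_i) \mid c_i$ guaranteed by \Cref{plain-crt}, since $c_i p_i/q = c_i/(q/p_i)$. Everything else is routine bookkeeping that I would include only briefly — that $\widetilde\varphi$ as written is invariant under integer shifts of each $y_i$ (an integer shift of $y_i$ changes $\vecc^\top\vecy$ by an integer) and that its values lie in $\tfrac1q\cdot\Z/q\Z$ (each $c_i y_i$ lies in $\tfrac1{p_i}\Z \subseteq \tfrac1q\Z$ since $p_i \mid q$). I do not anticipate a genuine obstacle; the only thing to be careful about is tracking the three moduli $p_i$, $q$, and $1$ correctly while chasing elements through the scaling maps $\mu_i$ and $\mu$.
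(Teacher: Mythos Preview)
Your proposal is correct and takes essentially the same approach as the paper: the paper's one-line proof defines $\widetilde\varphi(y_1,\dots,y_n) = \tfrac{1}{q}\,\varphi(p_1 y_1,\dots,p_n y_n)$, which is exactly your $\mu^{-1}\circ\varphi\circ\bigl(\bigoplus_i\mu_i\bigr)$. Your version simply spells out in more detail the explicit form of $\widetilde\varphi$, the inverse formula, and the integrality of the coefficient vector (via the divisibility $(q/p_i)\mid c_i$ from \Cref{plain-crt}), all of which the paper leaves implicit.
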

\begin{proof}[Proof of \Cref{normalized-crt}]
    This follows directly by \Cref{plain-crt}, by setting
    \[ \widetilde{\varphi}(y_1, \dots, y_n) = 1/q \cdot \varphi(p_1 y_1, p_2 y_2, \dots, p_n y_n).  \]
\end{proof}

Letting $\vecp = (p_1, \dots, p_n) \in \Z^n$, we use the notation $\floor{\cdot}_{\vecp} : [0,1)^n \to \bigoplus_{i \in [n]} 1/p_i \cdot \Z/p_i \Z$ to denote the function
\[ \floor{\cdot}_{\vecp} : \vecv \mapsto \left( \frac{\floor{v_1 p_1}}{p_1} , \dots, \frac{\floor{v_1 p_n}}{p_n} \right). \]
More generally, for elements in $[0,1)^{n \times m}$, we extend $\floor{\cdot}_{\vecp} : [0,1)^{n \times m} \to \left( \bigoplus_{i \in [n]} 1/p_i \cdot \Z/p_i \Z \right)^m$ to operate column-wise.

We will use the following basic fact.

\begin{lemma}\label{rounding-error-bound}
For any $\matA \in [0,1)^{n \times m}$ and $\vecx \in \R^m$, 
\[ \norm{ \left( \matA - \floor{\matA}_{\vecp} \right) \vecx}_1 \leq \frac{n}{\min_{i \in [n]} p_i} \norm{\vecx}_1. \]
\end{lemma}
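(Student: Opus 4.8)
The statement to prove is an entrywise rounding bound: for $\matA \in [0,1)^{n\times m}$ and $\vecx \in \R^m$,
\[ \norm{\left(\matA - \floor{\matA}_{\vecp}\right)\vecx}_1 \leq \frac{n}{\min_{i\in[n]} p_i}\,\norm{\vecx}_1. \]
The plan is to reduce this to a simple per-entry estimate on the difference matrix $\matA - \floor{\matA}_{\vecp}$. First I would observe that for each coordinate $i \in [n]$ and each column index $j \in [m]$, the $(i,j)$ entry of $\matA - \floor{\matA}_{\vecp}$ equals $v - \floor{v p_i}/p_i$ where $v = A_{ij} \in [0,1)$; since $\floor{v p_i} \leq v p_i < \floor{v p_i} + 1$, this difference lies in $[0, 1/p_i)$, hence has absolute value at most $1/p_i \leq 1/\min_{i\in[n]} p_i$. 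So every entry of the matrix $\matE := \matA - \floor{\matA}_{\vecp}$ satisfies $|E_{ij}| \leq 1/\min_i p_i =: \beta$.

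Next I would bound $\norm{\matE\vecx}_1$ by expanding: writing $\matE = [\vece^{(1)}, \dots, \vece^{(m)}]$ by columns, we have $\matE\vecx = \sum_{j=1}^m x_j \vece^{(j)}$, so by the triangle inequality and homogeneity of $\norm{\cdot}_1$,
\[ \norm{\matE\vecx}_1 \leq \sum_{j=1}^m |x_j|\,\norm{\vece^{(j)}}_1 \leq \sum_{j=1}^m |x_j| \cdot n\beta = n\beta \norm{\vecx}_1, \]
where $\norm{\vece^{(j)}}_1 \leq n\beta$ because each of the $n$ entries of the column is at most $\beta$ in absolute value. Substituting $\beta = 1/\min_{i\in[n]} p_i$ gives exactly the claimed bound.

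I do not expect any real obstacle here; the lemma is a routine triangle-inequality plus entrywise-bound argument, and the only thing to be careful about is the direction of the floor inequality (the difference is nonnegative and strictly less than $1/p_i$, not merely bounded in absolute value by it). If one prefers a one-line argument, it also follows immediately from $\norm{\matE\vecx}_1 \leq \norm{\matE}_{1\to 1}\norm{\vecx}_1$ where $\norm{\matE}_{1\to 1}$ is the maximum absolute column sum of $\matE$, which is at most $n\beta$ by the entrywise bound; but since the paper's conventions (e.g., $\norm{\matA}$ denotes the maximum column $\ell_2$-norm, not an operator norm) make explicit operator-norm notation potentially confusing, I would present the expanded version above.
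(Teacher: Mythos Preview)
Your proof is correct and follows essentially the same approach as the paper: both establish the entrywise bound $\matA - \floor{\matA}_{\vecp} \in [0,1/p^*)^{n\times m}$ with $p^* = \min_i p_i$ and then do a one-line norm computation. The only cosmetic difference is that the paper decomposes by rows and invokes H\"older's inequality ($|\vecm_i^\top \vecx| \leq \norm{\vecm_i}_\infty \norm{\vecx}_1$), whereas you decompose by columns and use the triangle inequality; both are equally direct.
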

\begin{proof}
    Let $p^* = \min_{i \in [n]} p_i$. Letting $\matM = \matA - \floor{\matA}_{\vecp}$, by properties of the floor function, we have $\matM \in \left[0,  \frac{1}{p^*} \right)^{n \times m}$. Let $\vecm_1, \cdots, \vecm_n \in \left[0, \frac{1}{p^*}\right)^m$ be the rows of $\matM$. We have
    \[ \norm{\matM \vecx}_1 = \sum_{i \in [n]} \left| \vecm_i^\top \vecx \right| \leq \sum_{i \in [n]} \norm{\vecm_i}_{\infty} \norm{\vecx}_1 \leq \frac{n}{p^*} \norm{\vecx}_1, \]
    as desired, where we have used H{\"o}lder's inequality to see that $\left|\vecm_i^\top \vecx \right| \leq \norm{\vecm_i}_{\infty} \norm{\vecx}_1$.
\end{proof}
We also use the following fact about the density of prime numbers.
\begin{lemma}\label{prime-density}
    For all sufficiently large $N \in \N$, there exist at least $N/\ln(N)$ distinct prime numbers in the interval $[N, 10N]$. 
\end{lemma}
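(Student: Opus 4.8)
The plan is to derive this immediately from the Prime Number Theorem. Writing $\pi(x)$ for the number of primes not exceeding $x$, the number of primes lying in the interval $[N,10N]$ is at least $\pi(10N)-\pi(N)$ (this already counts all primes in $(N,10N]$), so it suffices to show $\pi(10N)-\pi(N)\geq N/\ln N$ for all sufficiently large $N$.

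First I would recall the Prime Number Theorem in the form $\pi(x)=(1+o(1))\,x/\ln x$ as $x\to\infty$, and fix a small slack parameter, say $\delta=1/10$. For all sufficiently large $N$ this gives simultaneously $\pi(10N)\geq (1-\delta)\cdot\frac{10N}{\ln(10N)}$ and $\pi(N)\leq (1+\delta)\cdot\frac{N}{\ln N}$. Since $\ln(10N)=\ln N+\ln 10\leq (1+\delta)\ln N$ once $N$ is large enough that $\ln 10\leq \delta\ln N$, I can replace $\ln(10N)$ by $(1+\delta)\ln N$ in the denominator. Combining the three estimates yields
\[ \pi(10N)-\pi(N)\;\geq\;\frac{N}{\ln N}\left(\frac{10(1-\delta)}{1+\delta}-(1+\delta)\right), \]
and for $\delta=1/10$ the bracketed constant equals $\tfrac{9}{1.1}-1.1>7>1$, which completes the argument.

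I do not expect any real obstacle here: the claim is a routine consequence of the Prime Number Theorem, and in fact only Chebyshev-type bounds ($c_1\, x/\ln x\leq \pi(x)\leq c_2\, x/\ln x$ with explicit constants $c_1<1<c_2$) are needed, so one could avoid invoking PNT altogether. The only point needing a sentence of care is that $\ln(10N)$ is not literally $\ln N$; this discrepancy is harmless since it costs only a $(1+o(1))$ factor, which is exactly why the argument actually produces the far larger constant $7$ in place of the required $1$, leaving ample room for the $o(1)$ terms.
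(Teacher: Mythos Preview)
Your proposal is correct and follows essentially the same approach as the paper: both estimate $\pi(10N)-\pi(N)$ from below via the Prime Number Theorem (the paper uses the Chebyshev-type form $\frac{N}{2\ln N}\leq\pi(N)\leq\frac{2N}{\ln N}$, obtaining the slightly weaker constant $2$ in place of your $7$). Your remark that Chebyshev bounds alone suffice matches exactly what the paper does.
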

\begin{proof}
    For $N \in \N$, let $\pi(N) = |\{a \in [N] : a \text{ is prime} \}|$ denote the prime-counting function. By the prime number theorem, we know that for all sufficiently large $N$,
    \[ \frac{N}{2 \ln N} \leq \pi(N) \leq \frac{2N}{\ln N}. \]
    In particular,
    \[ \pi(N) \leq \frac{2N}{\ln N}, \;\;\; \pi(10N) \geq \frac{10N}{2 \ln(10 N)} = \frac{10N}{2\ln(N) + 2 \ln(10)} > \frac{4N}{ \ln N},\]
    for sufficiently large $N$. Therefore, for sufficiently large $N$, by taking the difference of the two quantities, there are at least $N / \ln N$ primes in $[N, 10N]$.
\end{proof}

\begin{theorem}\label{main-technical-reduction-to-npp}
Suppose $\nppError(m) = 2^{-\log^{2 + \varepsilon} m}$ for some constant $\varepsilon > 0$. Then there exists $\gamma(n) = 2^{O\left(n^{\frac{1}{1 + \varepsilon}}\right)}$ such that there is a $\poly(m)$-time reduction from $\IncGDD_{\gamma}$ in dimension $n = \Omega((\log m)^{1 + \varepsilon})$ to $\NPP_{\nppError}$ (in dimension $m$).
\end{theorem}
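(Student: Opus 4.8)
The plan is to follow the proof of \Cref{main-technical-reduction-to-sbp} almost verbatim, inserting the normalized Chinese Remainder Theorem (\Cref{normalized-crt}) to collapse the $n$-dimensional lattice structure down to a single scalar, which is the form $\NPP$ consumes. Given an $\IncGDD_\gamma$ instance $(\matB, \matS, \vect, r)$ in dimension $n$, I would set $m = 2^{\Theta(n^{1/(1+\varepsilon)})}$ (so that $n = \Theta((\log m)^{1+\varepsilon})$), $\sigma_2 = \poly(\log m)$, $\sigma_1 = r/(4m)$, and choose $\gamma(n) = 2^{O(n^{1/(1+\varepsilon)})}$ (e.g. $\gamma(n) = \Theta(m\cdot\poly(\log m))$) large enough that $r > \gamma(n)\lambda_n(\LL(\matB))$ makes $\sigma_1$ exceed the threshold of \Cref{above-lambda-n-is-uniform} for a negligible $\epsilon$. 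By \Cref{prime-density} I can pick $n$ distinct primes $p_1, \dots, p_n$ all lying in an interval $[N, 10N]$ with $N = \Theta(nm)$; set $\vecp = (p_1, \dots, p_n)$ and $q = \prod_i p_i$. Then $N^n \le q \le (10N)^n$, so $q = 2^{\Theta(n\log m)}$, and $p^* := \min_i p_i \ge N = \Theta(nm)$.

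The first half of the reduction is the $\SBP$ construction verbatim: sample $\vecu_1 \sim \Norm(\vect, \sigma_1^2\matI_n)$ and $\vecu_2, \dots, \vecu_m \sim \Norm(\zero, \sigma_1^2\matI_n)$, sample $m$ uniformly random vectors of $\LL(\matB) \bmod \PP(\matS)$ as the columns of $\matV$, and form $\widetilde{\matA} = \matS^{-1}(\matV + \matU) \bmod \Z^{n \times m} \in [0,1)^{n \times m}$, statistically close to $U([0,1)^{n\times m})$ by the argument of \Cref{tilde-A-is-uniform}. I would then round entrywise onto the $1/p_i$-grids, $\matA'' = \floor{\widetilde{\matA}}_{\vecp}$ (still essentially uniform on the grid), apply $\widetilde\varphi$ of \Cref{normalized-crt} column-wise to obtain a near-uniform $\veca' \in \left(1/q\cdot\Z/q\Z\right)^m \subseteq [0,1)^m$, add fresh continuous noise $\vece_0 \sim U([0,1/q)^m)$ to get $\vecy = \veca' + \vece_0 \sim U([0,1)^m)$, and finally apply the discrete-Gaussian smoothing of \Cref{uniform-shift-discrete-gaussian-is-gaussian} coordinate-wise (legitimate since $\sigma_2 = \poly(\log m)$ exceeds its threshold for a negligible $\epsilon$) --- sampling $w_j \sim D_{\Z + y_j,\, \sigma_2\sqrt{2\pi}}$ --- and rescale $\veca = \vecw/\sigma_2$, which is statistically close to $\Norm(\zero, \matI_m)$. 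Feeding $\veca$ to the $\NPP_{\nppError}$ oracle yields $\vecx \in \{\pm 1\}^m$ with $|\veca^\top\vecx| \le \nppError(m)\sqrt m$.

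To read off a lattice vector, set $\beta = (\vecw^\top\vecx - \vece_0^\top\vecx)\bmod 1 = (\veca')^\top\vecx \bmod 1 \in 1/q\cdot\Z/q\Z$, taken as the representative in $(-\tfrac12,\tfrac12]$; since $|\vecw^\top\vecx| = \sigma_2|\veca^\top\vecx| \le \sigma_2\nppError(m)\sqrt m$ and $|\vece_0^\top\vecx| \le m/q$ are tiny, the ``$\bmod 1$'' is vacuous, $q\beta \in \Z$ (as $\veca'$ lives on the $1/q$-grid), and $|q\beta| \le q\sigma_2\nppError(m)\sqrt m + m$. Put $\vecr = \left(\tfrac{q}{p_1}\beta,\dots,\tfrac{q}{p_n}\beta\right)$ and $\matM = \widetilde{\matA} - \matA''$. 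Provided $|q\beta| < p^*/2$, no coordinate of $q\beta/p_i$ wraps, so $\vecr$ is exactly the small representative of $\widetilde\varphi^{-1}(\beta) = \matA''\vecx \bmod \Z^n$; hence $\matA''\vecx - \vecr \in \Z^n$, so $\matS\vecr \equiv \matS\matA''\vecx \pmod{\LL(\matS)}$. Unwinding $\widetilde{\matA} = \matS^{-1}(\matV + \matU)\bmod\Z^{n\times m}$ exactly as in the proof of \Cref{main-technical-reduction-to-sbp} --- using $\matV\vecx \in \LL(\matB)$, $\LL(\matS)\subseteq\LL(\matB)$, the $\Z$-linearity of $\widetilde\varphi$, and $\vecx \in \Z^m$ --- gives $\matU\vecx - \matS\vecr - \matS\matM\vecx \in \LL(\matB)$, and the reduction outputs $\vecs = x_1(\matU\vecx - \matS\vecr - \matS\matM\vecx) \in \LL(\matB)$. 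Writing $\matU = [\vecu_1 || \matU_{-1}]$, $\vecx = (x_1, \vecx_{-1})$ and peeling off $x_1^2\vecu_1 = \vecu_1 = \vect + \vecu_1'$ with $\vecu_1' \sim \Norm(\zero, \sigma_1^2\matI_n)$, the output equals $\vect + \vecu_1' + x_1\matU_{-1}\vecx_{-1} - x_1\matS\vecr - x_1\matS\matM\vecx$. Bounding the first two error terms by $4\sigma_1 m = r$ exactly as in \Cref{main-technical-reduction-to-sbp} (via \Cref{chi-squared-tail-bound,gaussian-singular-value,bound-by-matrix-norm-and-ell-1}), and the last two by $\norm{\matS\vecr}_2 \le n\norm{\matS}\,q|\beta|/p^*$ and $\norm{\matS\matM\vecx}_2 \le nm\norm{\matS}/p^*$ (via \Cref{bound-by-matrix-norm-and-ell-1,ell-1-to-ell-infty,rounding-error-bound}), yields $\norm{\vecs - \vect}_2 \le r + n\norm{\matS}(q|\beta| + m)/p^*$. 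With $N = \Theta(nm)$ this tail collapses to the single requirement $q\sigma_2\nppError(m)\sqrt m \le 2m$, i.e. $\nppError(m) \le 2\sqrt m/(q\sigma_2)$, which --- since $q \le (10N)^n = 2^{O(n\log m)}$ and $n = \Theta((\log m)^{1+\varepsilon})$ --- holds with room to spare for $\nppError(m) = 2^{-\log^{2+\varepsilon}m}$ once the constant in $n$ is small enough, and the same slack gives the wraparound bound $|q\beta| < p^*/2$. Hence $\norm{\vecs - \vect}_2 \le r + \norm{\matS}/8$ and $\vecs \in \LL(\matB)$, so $\vecs$ solves $\IncGDD_\gamma$; a non-negligible oracle success probability $\mu$ is amplified by $O(1/\mu)$ independent repetitions with efficient verification, and prime selection, sampling $\matV$, and CRT arithmetic are all $\poly(m)$-time.

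The step I expect to be the genuine obstacle --- and where the parameter choices get pinned down --- is the CRT ``wraparound'' control: pulling the scalar $\NPP$ answer $\beta$ back through $\widetilde\varphi^{-1}$ must yield a vector $\vecr$ whose coordinates are genuinely small (size $\approx q|\beta|/p^*$) rather than arbitrary residues in $1/p_i\cdot\Z/p_i\Z$ of size $\approx 1$, which forces $q|\beta| < p^*/2$. This is a two-sided squeeze: $q = \prod_i p_i \ge N^n$ is necessarily large, yet $p^* \ge N$ must dominate not only the ``signal'' $q\sigma_2\nppError(m)\sqrt m$ but also the stubborn additive $m = q\cdot(m/q)$ coming from the continuous noise $\vece_0$ that is needed to turn the $\NPP$ input into a genuine Gaussian. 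Taking $N$ polynomially large in $m$ resolves the squeeze but pins $q = 2^{\Theta(n\log m)}$, so the reduction can only tolerate $\nppError(m) \le 2^{-\Theta(n\log m)}$ --- which is exactly why $n$ must be $\Theta((\log m)^{1+\varepsilon})$ to hit the target $2^{-\log^{2+\varepsilon}m}$. Everything else --- the statistical-distance bookkeeping for the two smoothing steps and the discrete Gaussian, the spectral estimates, and checking that all arithmetic (note $q$ has $O(\log^{2+\varepsilon}m)$ bits) is $\poly(m)$-time --- parallels the $\SBP$ proof.
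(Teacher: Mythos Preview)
Your proposal is correct and follows essentially the same route as the paper: the same smoothing-based construction of a near-uniform $n\times m$ matrix, the same CRT collapse via $\widetilde\varphi$ with primes of size $\Theta(nm)$, the same additive $U([0,1/q)^m)$ noise and discrete-Gaussian lift to produce a genuine Gaussian $\NPP$ instance, and the same output vector (your $\beta$ is the paper's $-e''$, your $\vecr$ its $\widetilde\varphi^{-1}(e'')$, and your $\matM$ its $\matA-\lfloor\matA\rfloor_{\vecp}$). Your identification of the CRT wraparound bound $q\sigma_2\kappa(m)\sqrt m \lesssim p^\ast$ as the parameter-pinning constraint, forcing $n = \Theta((\log m)^{1+\varepsilon})$, matches the paper's analysis exactly.
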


We now state our main corollary for number partitioning.

\begin{corollary}\label{sivp-to-npp}
    Suppose there is a polynomial time algorithm for $\NPP_{\nppError}$ (on average) for $\nppError(m) = 2^{-\log^{2 + \varepsilon} m}$ for some constant $\varepsilon > 0$ that succeeds with non-negligible probability. Then, there are randomized $2^{O\left( n^{\frac{1}{1 + \varepsilon}} \right)}$-time algorithms for the (worst-case) lattice problems $\SIVP_{\gamma}$, $\GapCRP_{\gamma}$, and $\GDD_{\gamma}$ in dimension $n$ for $\gamma(n) = 2^{O\left(n^{\frac{1}{1 + \varepsilon}}\right)}$. In particular, Assumption~\ref{subexp-hardness-assumption} implies that for all constant $\varepsilon > 0$, there is no polynomial time algorithm for $\NPP_{\nppError}$ (on average) for $\nppError(m) = 2^{-\log^{3 + \varepsilon} m}$ that succeeds with non-negligible probability. 

    More generally, suppose there is a $T(m)$-time algorithm for $\NPP_{\nppError}$ (on average) for $\nppError(m) = 2^{-\log^{2 + \varepsilon} m}$ for some constant $\varepsilon > 0$ that succeeds with non-negligible probability. Then, there are randomized $T\left( 2^{O\left( n^{\frac{1}{1 + \varepsilon}} \right)} \right)$-time algorithms for the (worst-case) lattice problems $\SIVP_{\gamma}$, $\GapCRP_{\gamma}$, and $\GDD_{\gamma}$ in dimension $n$ for $\gamma(n) = 2^{O\left(n^{\frac{1}{1 + \varepsilon}}\right)}$.
\end{corollary}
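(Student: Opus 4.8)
The plan is to mimic the proof of Corollary~\ref{sivp-to-sbp}: the corollary should fall out by composing Theorem~\ref{main-technical-reduction-to-npp} with the worst-case lattice reductions of Lemmas~\ref{sivp-to-incgdd}, \ref{gapcrp-to-incgdd}, and \ref{gdd-to-incgdd}, together with one short parameter-substitution step to pass from the $\log^{2+\varepsilon}$ form in the main technical theorem to the $\log^{3+\varepsilon}$ form in the ``in particular'' clause.

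First I would establish the main claim. Given a $T(m)$-time algorithm $\oracle$ for $\NPP_\nppError$ with $\nppError(m) = 2^{-\log^{2+\varepsilon}m}$ succeeding with non-negligible probability, fix a target lattice dimension $n$ and choose $m = 2^{\Theta(n^{1/(1+\varepsilon)})}$, the smallest $m$ with $n = \Omega((\log m)^{1+\varepsilon})$ as required by Theorem~\ref{main-technical-reduction-to-npp}. That theorem then gives a $\poly(m) = 2^{O(n^{1/(1+\varepsilon)})}$-time reduction from $\IncGDD_\gamma$ in dimension $n$ to $\NPP_\nppError$ in dimension $m$ with $\gamma(n) = 2^{O(n^{1/(1+\varepsilon)})}$; plugging in $\oracle$ (with polynomially many oracle calls, the polynomial depending on the non-negligible success probability, and amplifying success on the worst-case $\IncGDD$ side) yields a $T(2^{O(n^{1/(1+\varepsilon)})})$-time algorithm for $\IncGDD_\gamma$ in dimension $n$, the additive $\poly(m)$ overhead being absorbed. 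Composing with Lemmas~\ref{sivp-to-incgdd}, \ref{gapcrp-to-incgdd}, and \ref{gdd-to-incgdd}---whose extra multiplicative factors $8$, $12$, $3$ on $\gamma$ disappear into the $O(\cdot)$ in the exponent---produces randomized $T(2^{O(n^{1/(1+\varepsilon)})})$-time algorithms for $\SIVP_\gamma$, $\GapCRP_\gamma$, and $\GDD_\gamma$ with $\gamma(n) = 2^{O(n^{1/(1+\varepsilon)})}$. Specializing to $T = \poly$ gives the first sentence of the corollary, and the general statement is exactly what was just derived.

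For the ``in particular'' clause, suppose for contradiction that for some constant $\varepsilon > 0$ there is a $\poly(m)$-time algorithm for $\NPP_\nppError$ with $\nppError(m) = 2^{-\log^{3+\varepsilon}m}$ succeeding with non-negligible probability. I would apply the main claim with $\varepsilon$ replaced by $\varepsilon' := 1+\varepsilon$, noting $2 + \varepsilon' = 3+\varepsilon$ and $1 + \varepsilon' = 2+\varepsilon$; this yields randomized $2^{O(n^{1/(2+\varepsilon)})}$-time algorithms for $\SIVP_\gamma$, $\GapCRP_\gamma$, $\GDD_\gamma$ with $\gamma(n) = 2^{O(n^{1/(2+\varepsilon)})}$. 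Since $\varepsilon > 0$ we have $\frac{1}{2+\varepsilon} < \frac12$, so I can pick $\delta > 0$ with $\frac{1}{2+\varepsilon} < \frac12 - \delta$ (e.g.\ $\delta = \frac{\varepsilon}{4(2+\varepsilon)}$); then $n^{1/(2+\varepsilon)} = o(n^{1/2-\delta})$, so the running time is $2^{o(n^{1/2-\delta})}$ and the approximation factor satisfies $2^{O(n^{1/(2+\varepsilon)})} \leq 2^{n^{1/2-\delta}}$ for all large $n$. Because an algorithm solving a lattice problem to factor $\gamma$ also solves it to any larger factor, I obtain $2^{o(n^{1/2-\delta})}$-time algorithms for all three of $\SIVP_\gamma$, $\GapCRP_\gamma$, $\GDD_\gamma$ with $\gamma(n) = 2^{n^{1/2-\delta}}$, contradicting Assumption~\ref{subexp-hardness-assumption} instantiated at $\delta$.

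Every step is a routine composition of reductions already in hand, so there is no real mathematical obstacle; the only part demanding care is the exponent bookkeeping in the last paragraph---verifying that the exponent $\frac{1}{2+\varepsilon}$ emerging from the reduction is genuinely below $\frac12$, choosing $\delta$ so that Assumption~\ref{subexp-hardness-assumption} applies, and checking that the approximation-factor inequality points the easy way (a smaller $\gamma$ is the harder problem, so an algorithm for $\gamma(n) = 2^{O(n^{1/(2+\varepsilon)})}$ certainly solves $\gamma(n) = 2^{n^{1/2-\delta}}$).
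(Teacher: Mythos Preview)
Your proposal is correct and follows exactly the paper's approach: the paper's proof is the single sentence ``This follows by directly composing Lemmas~\ref{sivp-to-incgdd}, \ref{gapcrp-to-incgdd}, \ref{gdd-to-incgdd} and Theorem~\ref{main-technical-reduction-to-npp}.'' Your write-up simply unpacks this composition and, for the ``in particular'' clause, makes explicit the $\varepsilon \mapsto \varepsilon' = 1+\varepsilon$ substitution and the comparison against Assumption~\ref{subexp-hardness-assumption} that the paper leaves to the reader.
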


\begin{proof}[Proof of \Cref{sivp-to-npp}]
    This follows by directly composing \Cref{sivp-to-incgdd,gapcrp-to-incgdd,gdd-to-incgdd} and \Cref{main-technical-reduction-to-npp}.
\end{proof}

We now prove \Cref{main-technical-reduction-to-npp}.

\begin{proof}[Proof of \Cref{main-technical-reduction-to-npp}]
Let $(\matB \in \R^{n \times n}, \matS \in \R^{n \times n}, \vect \in \R^n, r \in \R)$ be the given $\IncGDD$ instance. Let
\begin{align*} 
m &= 2^{10n^{\frac{1}{1 + \varepsilon}}},
\\\sigma_1 &= \frac{r}{4m},
\\\sigma_2 &= \ln m,
\\\gamma(n) &= 4 m \ln m = 40 \ln(2)  2^{10n^{\frac{1}{1 + \varepsilon}}} n^{\frac{1}{1 + \varepsilon}}.
\end{align*}
Let $p_1, \dots, p_n$ be $n$ distinct prime numbers in the range $[32 n m, 320 n m]$, which we know must exist for sufficiently large $n, m$ by \Cref{prime-density} (since $m \geq n$). Let $q = \prod_{i = 1}^n p_i \leq (320nm)^n$. Sample $\vecu_1 \sim \Norm(\vect, \sigma_1^2 \matI_n)$, $\vecu_2, \cdots, \vecu_{m} \sim_{\iid} \Norm(\zero, \sigma_1^2 \matI_n)$. Let $\matU = [\vecu_1, \vecu_2, \cdots, \vecu_m] \in \R^{n \times m}$. Sample $m$ uniformly random lattice vectors $\vecv_i \in \LL(\matB) \mod \PP(\matS)$ (see~\cite[Proposition 2.9]{DBLP:journals/siamcomp/Micciancio04}), and let $\matV = [\vecv_1, \vecv_2, \cdots, \vecv_m] \in \R^{n \times m}$. Define
\[ \matA = \matS^{-1}(\matV + \matU) \mod{\Z^n} \in [0,1)^{n \times m}.\]
\begin{proposition}\label{npp-A-is-uniform}
We have the inequality
\[ \Delta \left(\matA,  U([0,1)^{n \times m}) \right) \leq \negl(m). \]
\end{proposition}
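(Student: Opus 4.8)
The plan is to follow the argument of \Cref{tilde-A-is-uniform} essentially verbatim, since the construction of $\matA$ here has the same form as that of $\widetilde{\matA}$ in the $\SBP$ reduction; only the parameters $m$, $\sigma_1$, $\sigma_2$, $\gamma$ have changed. The heart of the proof is a smoothing argument: one first verifies that the noise width $\sigma_1$ exceeds the smoothing parameter of $\LL(\matB)$, so that each Gaussian column, reduced modulo the parallelepiped $\PP(\matB)$, is statistically close to uniform, and then one transports this uniformity through the map $\matS^{-1}$ and the shift by $\matV$.

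Concretely, I would proceed in four steps. \textbf{(1)} Use the $\IncGDD$ promise $r > \gamma(n) \cdot \lambda_n(\LL(\matB)) = 4 m \ln m \cdot \lambda_n(\LL(\matB))$ together with $\sigma_1 = r/(4m)$ to conclude $\sigma_1 \geq \ln m \cdot \lambda_n(\LL(\matB))$, and check that, taking $\epsilon = e^{-\ln^2 m}$ (which is $\negl(m)$), one has $\ln m \geq \sqrt{\ln(2n(1 + 1/\epsilon))/(2\pi^2)}$, so the hypothesis of \Cref{above-lambda-n-is-uniform} is met. \textbf{(2)} Apply \Cref{above-lambda-n-is-uniform} once per column (with $\vecmu = \vect$ for the first column and $\vecmu = \zero$ for the remaining $m-1$ columns) and the triangle inequality to get $\Delta(\matU \bmod \PP(\matB),\, U(\PP(\matB))^m) \leq m\epsilon/2 = \negl(m)$. \textbf{(3)} Since the columns of $\matV$ are uniform in $\LL(\matB) \bmod \PP(\matS)$ and $\LL(\matS) \subseteq \LL(\matB)$, adding $\matV$ and reducing modulo $\PP(\matS)$ yields $\Delta(\matU + \matV \bmod \PP(\matS),\, U(\PP(\matS))^m) \leq \negl(m)$. \textbf{(4)} Multiply on the left by $\matS^{-1}$, which is a bijection $\PP(\matS) \to [0,1)^n$ and hence preserves statistical distance, to obtain $\Delta(\matA,\, U([0,1)^{n \times m})) \leq \negl(m)$.

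The only point requiring genuine care — and the closest thing to an obstacle — is confirming that the chosen $\sigma_1$ really clears the smoothing parameter for \emph{every} legal $\IncGDD$ instance; this is exactly what the slack factor $4m\ln m$ in $\gamma$ buys. In particular one must verify that with $n = \Theta((\log m)^{1+\varepsilon})$ (equivalently $\log m = \Theta(n^{1/(1+\varepsilon)})$, by the choice $m = 2^{10 n^{1/(1+\varepsilon)}}$) the quantity $\ln(2n(1 + 1/\epsilon))$ is $\Theta(\log^2 m)$, so that $\sqrt{\ln(2n(1+1/\epsilon))/(2\pi^2)} \leq \ln m$ as needed. Everything else is routine bookkeeping with the triangle inequality and the fact that reduction modulo a lattice, and left-multiplication by an invertible matrix, both commute appropriately and preserve total variation distance.
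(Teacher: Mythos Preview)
Your proposal is correct and follows essentially the same approach as the paper's own proof, which simply points back to the analysis of \Cref{tilde-A-is-uniform} and verifies the single inequality $\sigma_1 = r/(4m) \geq \ln m \cdot \lambda_n(\LL(\matB)) \geq \sqrt{\ln(2n(1 + e^{\ln^2 m}))/(2\pi^2)} \cdot \lambda_n(\LL(\matB))$ before invoking \Cref{above-lambda-n-is-uniform} and the triangle inequality. Your four-step breakdown is in fact more detailed than what the paper writes, but the underlying argument is identical.
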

\begin{proof}
    This follows from the same analysis as in \Cref{tilde-A-is-uniform}. Specifically, since for sufficiently large $n$ and $m$,
    \[ \sigma_1 = \frac{r}{4m} \geq \ln m \cdot \lambda_n(\LL(\matB)) \geq \sqrt{\frac{\ln\left(2n \left(1 + 1/e^{-\ln^2(m)} \right)\right)}{2 \pi^2}} \cdot \lambda_n(\LL(\matB)),\]
    by \Cref{above-lambda-n-is-uniform} and the triangle inequality, we get a total statistical distance of $m \cdot e^{-\ln^2(m)} = \negl(m)$.
\end{proof}

Let $\widetilde{\varphi}$ be the isomorphism guaranteed by \Cref{normalized-crt}, with $\vecc \in \Z^n$ being the coefficients defining the linear map $\widetilde{\varphi}$. Let $\vecy \in \R^m$ be defined by
\[ \vecy = \widetilde{\varphi}(\floor{\matA}_{\vecp}) + \vecf \mod \Z^m \in [0,1)^m\]
where $\vecf \in \R^m$ is sampled as $\vecf \sim U([0,1/q)^m)$.

We argue that $\Delta(\vecy, U([0,1)^m)) \leq \negl(m)$ as follows. Since $\matA$ is (close to) $U([0,1)^{n \times m})$, it follows that $\floor{\matA}_{\vecp}$ is (close to) $U(\bigoplus_{i \in [n]} 1/p_i \cdot  \Z / p_i \Z)^m$. Moreover, since $\widetilde{\varphi}$ is a bijection, it follows that $\widetilde{\varphi}(\floor{\matA}_{\vecp})$ is (close to) $U(1/q \cdot \Z/q\Z)^m$. Since $\vecf \sim U([0,1/q)^m)$, we can see that $\Delta(\vecy, U([0,1)^m)) \leq \negl(m)$.

Sample $\vecw \sim D_{\Z^m + \vecy, \sigma_2 \sqrt{2\pi}}$. Note that by construction, $\vecw \equiv \vecy \mod \Z^m$. In anticipation of applying \Cref{uniform-shift-discrete-gaussian-is-gaussian}, we observe that for sufficiently large $m$,
\[ \sigma_2 = \ln m \geq \sqrt{\frac{\ln\left(2m\left(1 + 1/e^{-\ln^2(m)} \right)\right)}{2 \pi^2}}.\] 
Therefore, by invoking \Cref{uniform-shift-discrete-gaussian-is-gaussian}, $\Delta(\vecy, U([0,1)^m)) \leq \negl(m)$, and the triangle inequality, we see that
\[ \Delta \left( \Norm(\zero, \sigma_2^2 \matI_m), \vecw \right) \leq e^{-\ln^2(m)} + \negl(m) = \negl(m). \]
Let $\veca = \frac{1}{\sigma_2} \vecw$. It follows that $\veca$ is statistically close to $\Norm(\zero, \matI_m)$. Feed $\veca$ into the $\NPP$ solver to receive some $\vecx \in \{-1, 1\}^m$ such that $|\veca^\top \vecx| \leq \nppError(m) \sqrt{m}$. Let $e = - \veca^\top \vecx \in \R$ with $|e| \leq \nppError(m) \sqrt{m}$. Let $x_1 \in \{-1, 1\}$ be the first entry of $\vecx$. Let $e' = \sigma_2 e \in \R$, and let $e'' = \vecf^\top \vecx + e' \in \R$.

The reduction outputs
\[ \vecs = x_1 \left( \matU \vecx - \matS( \matA - \floor{\matA}_{\vecp}) \vecx + \matS \widetilde{\varphi}^{-1}(e'')\right). \]

We first argue that $\vecs \in \LL(\matB)$. Since $\veca^\top \vecx + e = 0$, by scaling up, we have $\vecw^\top \vecx + e' = 0$. Since $\vecx \in \Z^m$,
\begin{align*} 0 = \vecw^\top \vecx + e' \equiv \vecy^\top \vecx + e' &=  \left( \widetilde{\varphi}(\floor{\matA}_{\vecp}) + \vecf \right)^\top \vecx + e' \mod 1,
\\&\equiv  \vecc^\top \floor{\matA}_{\vecp} \vecx + \vecf^\top \vecx + e' \mod 1,
\\&\equiv \widetilde{\varphi} (\floor{\matA}_{\vecp} \vecx) + e'' \mod 1.
\end{align*}
By closure, we know $e'' \in 1/q \cdot \Z/q\Z$, so we have
\[ 0 \equiv \widetilde{\varphi} (\floor{\matA}_{\vecp} \vecx) + e'' \equiv \widetilde{\varphi} (\floor{\matA}_{\vecp} \vecx) + \widetilde{\varphi}\left(\widetilde{\varphi}^{-1}(e'') \right) \equiv \widetilde{\varphi} \left( \floor{\matA}_{\vecp} \vecx + \widetilde{\varphi}^{-1}(e'') \right) \mod{1}. \]
By applying $\widetilde{\varphi}^{-1}$, it follows that
\[ \matA \vecx - (\matA - \floor{\matA}_{\vecp}) \vecx + \widetilde{\varphi}^{-1}(e'') = \floor{\matA}_{\vecp} \vecx + \widetilde{\varphi}^{-1}(e'') \in \Z^n. \]
Plugging in the definition of $\matA$ and since $\vecx \in \Z^m$,
\[  \matS^{-1}(\matV + \matU) \vecx - (\matA - \floor{\matA}_{\vecp}) \vecx + \widetilde{\varphi}^{-1}(e'') \in \Z^n. \]
Multiplying by $\matS$ on the left gives
\[ (\matV + \matU) \vecx - \matS( \matA - \floor{\matA}_{\vecp}) \vecx + \matS \widetilde{\varphi}^{-1}(e'') \in \LL(\matS) \subseteq \LL(\matB).\]
Since $\matV$ contains vectors in $\LL(\matB)$ and $\vecx \in \Z^m$, we can subtract by $\matV \vecx$ to get
\[ \matU \vecx - \matS( \matA - \floor{\matA}_{\vecp}) \vecx + \matS \widetilde{\varphi}^{-1}(e'') \in \LL(\matB).\]
Multiplying by the sign $x_1 \in \{-1, 1\}$ gives
\[ \vecs = x_1 \left( \matU \vecx - \matS( \matA - \floor{\matA}_{\vecp}) \vecx + \matS \widetilde{\varphi}^{-1}(e'')\right) \in \LL(\matB), \]
as desired.

We next argue that the norm of $\vecs - \vect$ is small. Decompose $\vecx$ as $\vecx^\top = [x_1 || \vecx_{-1}^\top]$, and decompose $\matU$ as $\matU = [\vecu_1 || \matU_{-1}]$. We then have
\begin{align*} \vecs &= x_1 \matU \vecx - x_1 \matS( \matA - \floor{\matA}_{\vecp}) \vecx + x_1 \matS \widetilde{\varphi}^{-1}(e'')
\\&= x_1 (\vecu_1 x_1 + \matU_{-1} \vecx_{-1}) - x_1 \matS( \matA - \floor{\matA}_{\vecp}) \vecx + x_1 \matS \widetilde{\varphi}^{-1}(e'')
\\&= \vecu_1 + x_1 \matU_{-1} \vecx_{-1} - x_1 \matS( \matA - \floor{\matA}_{\vecp}) \vecx + x_1 \matS \widetilde{\varphi}^{-1}(e'') 
\\&= \vect + \vecu_1' + x_1 \matU_{-1} \vecx_{-1} - x_1 \matS( \matA - \floor{\matA}_{\vecp}) \vecx + x_1 \matS \widetilde{\varphi}^{-1}(e''),
\end{align*}
where the distribution of $\vecu_1'$ is $\Norm(\zero, \sigma_1^2 \matI_n)$. It follows that
\begin{align*} \norm{\vecs - \vect}_2 &= \norm{\vecu_1' + x_1 \matU_{-1} \vecx_{-1} - x_1 \matS( \matA - \floor{\matA}_{\vecp}) \vecx + x_1 \matS \widetilde{\varphi}^{-1}(e'')}_2 &
\\&\leq\norm{\vecu_1'}_2 + \norm{\matU_{-1} \vecx_{-1}}_2 + \norm{\matS( \matA - \floor{\matA}_{\vecp}) \vecx}_2 + \norm{\matS \widetilde{\varphi}^{-1}(e'')}_2 &
\\&\leq 4 \sigma_1 m + \norm{\matS( \matA - \floor{\matA}_{\vecp}) \vecx}_2 + \norm{\matS \widetilde{\varphi}^{-1}(e'')}_2 & \text{(by \Cref{chi-squared-tail-bound,gaussian-singular-value})}
\\&\leq 4 \sigma_1 m  + \norm{\matS} \left( \norm{ \left(\matA - \floor{\matA}_{\vecp} \right) \vecx}_1 + \norm{\widetilde{\varphi}^{-1}(e'')}_1 \right) & \text{(by \Cref{bound-by-matrix-norm-and-ell-1})}
\\&\leq 4 \sigma_1 m  + \norm{\matS} \left( \frac{n}{\min_{i \in [n]} p_i} \norm{\vecx}_1 + \norm{\widetilde{\varphi}^{-1}(e'')}_{1} \right) & \text{(by \Cref{rounding-error-bound})}
\\&= r  + \norm{\matS} \left( \frac{nm}{\min_{i \in [n]} p_i} + \norm{\widetilde{\varphi}^{-1}(e'')}_{1} \right). &
\\&\leq r  + \norm{\matS} \left( \frac{1}{16} + \norm{\widetilde{\varphi}^{-1}(e'')}_{1} \right). &
\end{align*}
It suffices to upper bound $\norm{\widetilde{\varphi}^{-1}(e'')}_{1}$ by $1/16$. Recall from \Cref{normalized-crt} that
\[ \widetilde{\varphi}^{-1}(e'') = \left(\frac{q}{p_1} e'', \frac{q}{p_2} e'', \dots, \frac{q}{p_n} e'' \right) \in \bigoplus_{i \in [n]} 1/p_i \cdot \Z/p_i \Z.\]
For of these entries to be small when viewed in $\R$, we want to ensure that there's no ``wraparound.'' In particular, if
\[ e'' \leq \frac{\min_{i \in [n]} p_i}{16qn}, \]
then $\norm{\widetilde{\varphi}^{-1}(e'')}_{1}\leq 1/16$, as desired. Since $\min_{i \in [n]} p_i \geq 32 nm$, it suffices to show that
\[ e'' \leq \frac{2m}{q}. \]
Recall that $e'' = \vecf^\top \vecx + e' = \vecf^\top \vecx + \sigma_2 e$, where $\vecf \sim U([0,1/q)^m)$ and $|e| \leq \nppError(m) \sqrt{m}$. It follows that
\[ |e''| \leq \left| \vecf^\top \vecx \right| + |e'| \leq  \norm{\vecf}_{\infty} \norm{\vecx}_1 + \sigma_2 |e| \leq \frac{m}{q} + \sigma_2 \cdot \nppError(m) \sqrt{m}.\]
Thus, it suffices to show $\sigma_2 \cdot \nppError(m) \sqrt{m} \leq m/q$, or equivalently, $\nppError(m) \leq \sqrt{m} / (q \ln m)$. We have
\begin{align*} \frac{\sqrt{m}}{q \ln m} = \frac{2^{5 n^{\frac{1}{1 + \varepsilon}}}}{ q 10 \ln(2) n^{\frac{1}{1 + \varepsilon}}} &\geq \frac{2^{5 n^{\frac{1}{1 + \varepsilon}}}}{ 10 \ln(2) (320n m)^n n^{\frac{1}{1 + \varepsilon}}}
\\&= \frac{2^{5 n^{\frac{1}{1 + \varepsilon}}}}{ 10 \ln(2) (320n)^n  2^{10 n^{\frac{2 + \varepsilon}{1 + \varepsilon}}} n^{\frac{1}{1 + \varepsilon}}}
\\&\geq \frac{1}{2^{11 n^{\frac{2 + \varepsilon}{1 + \varepsilon}}}}
\end{align*}
for sufficiently large $n$. On the other hand,
\[ \nppError(m) = \frac{1}{2^{(\log m)^{2 + \varepsilon}}} = \frac{1}{2^{\left(10 n^{\frac{1}{1 + \varepsilon}}\right)^{2 + \varepsilon}}} = \frac{1}{2^{10^{2 + \varepsilon} n^{\frac{2 + \varepsilon}{1 + \varepsilon}}}} \leq  \frac{1}{2^{100 n^{\frac{2 + \varepsilon}{1 + \varepsilon}}}}. \]
Therefore, $\nppError(m) \leq \sqrt{m} / (q \ln m)$, as desired.

Lastly, we note that the $\NPP_{\nppError}$ solver need only succeed with some non-negligible probability $\mu(m)$. As a result, we can repeat this whole process $O(1/\mu) = \poly(m)$ times, and since we can efficiently verify whether the $\NPP_{\nppError}$ solver succeeded, the reduction will still go through.
\end{proof}

Moreover, as $\NPP$ is a special case of $\SBP$, all of the generalizations and variants discussed in \Cref{sec:variants-and-generalizations} apply here to $\NPP$ as well.

\paragraph{Acknowledgements.} 
The authors were supported in part by DARPA under Agreement No. HR00112020023, NSF CNS-2154149 and a Simons Investigator Award. The first author is also supported in part by NSF DGE-2141064. We are particularly thankful to David Gamarnik for a stimulating conversation where he described the symmetric perceptron and number partitioning problems to us and posed the question of showing computational hardness for them. We are grateful to Alon Rosen, Kiril Bangachev, Stefan Tiegel and Riccardo Zecchina for valuable discussions. We also thank the anonymous reviewers for their valuable feedback.

\bibliographystyle{alpha}
\bibliography{refs}

\end{document}